\def\R{\textrm{I\kern-0.21emR}}
\def\N{\textrm{I\kern-0.21emN}}
\def\1{1\textnormal{\kern -0.21emI}}
\renewcommand{\geq}{\geqslant}
\renewcommand{\leq}{\leqslant}
\newtheorem{theorem}{Theorem}
\newtheorem{proposition}{Proposition}
\newtheorem{definition}{Definition}
\newtheorem{lemma}{Lemma}
\theoremstyle{definition}
\theoremstyle{definition}\newtheorem{remark}{Remark}
\newcommand{\Hun}{\mathbf{(H_1)}}
\newcommand{\Hdeux}{\mathbf{(H_2)}}
\title{What is the optimal shape of a fin for one dimensional heat conduction?\thanks{The third author was partially supported by ANR Project OPTIFORM.}}
\author{Gilles Marck\footnote{Universit\'e Pierre et Marie Curie (Univ. Paris 6), CNRS UMR 7598, Laboratoire Jacques-Louis Lions, F-75005, Paris, France ({\tt gilles.marck@upmc.fr}).} 
\and Gr\'egoire Nadin\footnote{CNRS, Universit\'e Pierre et Marie Curie (Univ. Paris 6), UMR 7598, Laboratoire Jacques-Louis Lions, F-75005, Paris, France ({\tt gregoire.nadin@upmc.fr}).}
\and Yannick Privat\footnote{CNRS, Universit\'e Pierre et Marie Curie (Univ. Paris 6), UMR 7598, Laboratoire Jacques-Louis Lions, F-75005, Paris, France ({\tt yannick.privat@upmc.fr}).}
}
\date{}
\begin{document}

\maketitle

\begin{abstract}
This article is concerned with the shape of small devices used to control the heat flowing between a solid and a fluid phase, usually called \textsl{fin}. 
The temperature along a fin in stationary regime is modeled by a one-dimensional Sturm-Liouville equation whose coefficients strongly depend on its geometrical features. 
We are interested in the following issue: is there any optimal shape maximizing the heat flux at the inlet of the fin? Two relevant constraints are examined, by imposing 
either its volume or its surface, and analytical nonexistence results are proved for both problems. Furthermore, using specific perturbations, we explicitly compute the optimal values and construct maximizing sequences. We show in particular that the optimal heat flux at the inlet is infinite in the first case and finite in the second one. Finally, we provide several extensions of these results for more general models of heat conduction, as well as several numerical illustrations.
\end{abstract}

\noindent\textbf{Keywords: }heat conduction, calculus of variation, shape optimization, Sturm-Liouville equation, volume constraint, surface constraint. 

\medskip

\noindent\textbf{AMS classification:} 49J15, 49K15, 34E05.

\section{Introduction}\label{secintro}
The increasing need for compact and efficient thermal systems leads to new challenges in the design of heat transfer devices. Across several engineering fields dealing with thermal management issues, a recurrent problematic concerns the optimal shape of several small elements aiming at locally increasing heat transfer. Theses systems, usually called fins, may adopt various shapes and designs, slightly extending the structure subjected to thermal loads.

For instance, fins are widely used to control the temperature of the heat exchangers taking place over the computational processing units (CPU), since thermal overloads can have devastating effects on their physical integrity. Fins are also extensively used in industrial forming processes, mainly to evacuate the heat handled by the molded material, and must be designed to guarantee that the bulk temperature evolves inside a range specified by manufacturing constraints. 

Therefore, efficient fin shapes are required either to improve cooling or warming processes, controlling the local temperature or increasing the heat transfer. Many engineering works focused on modeling the direct problem in order to assess the efficiency of different fin shapes \cite{Bergman}. Nevertheless, very few theoretical results and mathematical proofs do exist (see for instance \cite{Bobaru,Grodzovskii,huang-wuchiu}). The study proposed in the present article tackles this shape optimization problem, taking into account several relevant constraints on the admissible class of designs.

Engineering motivations behind the fin optimization are rooted in a category of physical problems related to efficient transport of a conservative flux. In particular, optimizing the shape of a fin belongs to a larger class of problems aiming at reducing the thermal resistance occurring when the heat flows inside different media. Indeed, the main purpose of a fin is to thermally link a solid material subject to a heat flux with a fluid flow taking place around it. Its role consists in cooling down/warming up the solid phase by transferring heat to/from the fluid phase. 
 
From the optimal design point of view, the main difficulties arise from the multi-physic aspects of this problem, depicted on Fig.~\ref{Fig:fin}. Indeed, the heat flowing from a solid domain through a fin to a fluid part is only transported by conduction inside the structure, as far as it reaches its boundary. Then, according to heat flux conservativeness property, it is fully transferred into the fluid phase. Hence, it is at the same time transported by the fluid motion and conducted between fluid elements, which is referred to as \textsl{conducto-convection phenomenon} (see \cite{Welty}). As a consequence, the heat flowing from the fin to the fluid depends on the flow pattern around boundaries. This multi-physics optimal design problem with a Navier-Stokes/heat coupling system has been numerically investigated for some academic configurations in~\cite{marck}. For the needs of simple and robust modeling, physical/engineering works often only takes into consideration the conduction phenomenon occurring inside the bulk material, considering an average convective coefficient standing for the heat transfer at the solid/fluid interface \cite{Bergman,Welty}. This is the choice we make in the sequel.

In this article, we will consider a fin subject to a steady-state thermal regime and attached to a device at constant temperature. A fin is considered as thermally efficient if it conveys the largest amount of heat, while requiring the smallest volume of solid material. Two reasons motivate this claim: first, a fin is generally made of expensive components, because it requires high conductivity materials such as copper. Secondly, a fin is generally oriented orthogonally to the cross-flow direction, and we could expect that it is designed to generate the smallest possible perturbation in the fluid motion. Indeed, a smaller fin produces a smaller perturbation and requires a smaller additional power to set the fluid into motion.

This feature leads us to deal with the following optimal design problem: maximize the heat crossing the fin, by prescribing either its volume or its surface. From a mathematical point of view, the problems settled within this article write as infinite dimensional optimization problems subject to constraints of several natures: (i) global ones such as volume or surface, (ii) pointwise ones, and (iii) ordinary differential equation ones, since the cost functional depends on the solution of a Sturm-Liouville equation, whose coefficients write as highly nonlinear functions of the unknown.

The idea of minimizing functionals depending on a Sturm-Liouville operator, such as eigenvalues or eigenfunctions, is a long story and goes back at least to M. Krein in \cite{krein}, see also \cite{horvath,huang,mahar-willner,mahar-willner2} and \cite{henrot} for a review on such problems. 
We also mention \cite{henrotPrivat,privat} where two problems close to the ones addressed in this article have been solved in the context of Mathematical Biology (see Remark \ref{rk:comp} for a comparison between these problems). 

\bigskip

The aim of the present paper is to investigate theoretically the issue of maximizing the heat flux under at the inlet of a fin with respect to its shape, under surface or volume constraints. This analysis will be carried out 
using the simplified model (\ref{eq:T}) for the fin, for which we will prove that optimal shapes do not exist, but that one can construct a maximizing sequence in the volume constraint 
case and compute explicitely the optimal value of the heat flux. In the related work \cite{Belinskiy},  
the authors considered a simplified nonstationary model of fin, getting rid of the convective heat transfer from the lateral side of the structure ($\beta=0$ in equation 
(\ref{eq:T}) below), and investigated 
a different criterion (the cooling rate of the fin), for which they completely solved the maximization problem using rearrangement techniques.
We also mention \cite{ChangCleaverChen1982,ChiuChen2002,LiawYeh1994,MarinElliottHeggsInghamLesnicWen2004}, in which more involved models are investigated exclusively in a numerical way.
The novelty of this article comes from not only the awareness of the convective heat transfer, making the optimal design problem harder to tackle but also from the fact that we completely solve it theoretically. This is why the present study can be viewed, in some sense, as a first step in order to understand and derive analytical results for more involved models. 
 
This article is structured as follows: in Section \ref{sec:model1D}, a Sturm-Liouville model of temperature along the fin is derived using physical arguments. Section \ref{sec:Fa} is devoted to introducing the cost functional $F(a)$, standing for the heat flux at the inlet of the fin, and providing continuity properties about it. The two main optimal design problems investigated are introduced in Section \ref{sec:Optim8}: in a nutshell, the first one consists in maximizing $F(a)$ with a volume constraint whereas the second one aims to maximize $F(a)$ with a lateral surface constraint. These problems are solved in Section \ref{sec:mainRes} and proved, respectively in Sections \ref{sec:optvol} and \ref{sec:optS}. More precisely, we show in each case a nonexistence result, provide the optimal value for this problem, as well as a way to construct maximizing sequences. Finally, all these results are extended to a more general setting in Section \ref{sec:ccl}. Several numerical illustrations of the results are also presented.

\section{Modeling of an axisymmetric fin}\label{sec:model_fin}
\subsection{Sturm-Liouville model of the temperature conduction}\label{sec:model1D}

Let us consider an axisymmetric fin $\Omega_{a}$ of length $\ell>0$ and radius $a(x)$ at abscissa $x$, as displayed in Figure~\ref{Fig:fin}, defined in a Cartesian coordinate system by
$$
\Omega_{a}=\{(r\cos\theta,r\sin\theta,x)\mid r\in [0,a(x)), \ \theta\in \mathbb{S}^1, \ x\in (0,\ell)\}.
$$
We will assume in the sequel that 
\begin{itemize}
\item[$\Hun$] $a\in W^{1,\infty}(0,\ell)$ so that the surface element is defined almost everywhere;
\item[$\Hdeux$] there exists $a_{0}>0$ such that $a(x)\geq a_{0}$ for every $x\in [0,\ell]$ so that the fin cannot collapse.
\end{itemize}
Figure \ref{Fig:fin} sums-up the situation and the notations we will use throughout this article. 
\begin{figure}[h!]
	\begin{center}
		\includegraphics[width=0.9\textwidth]{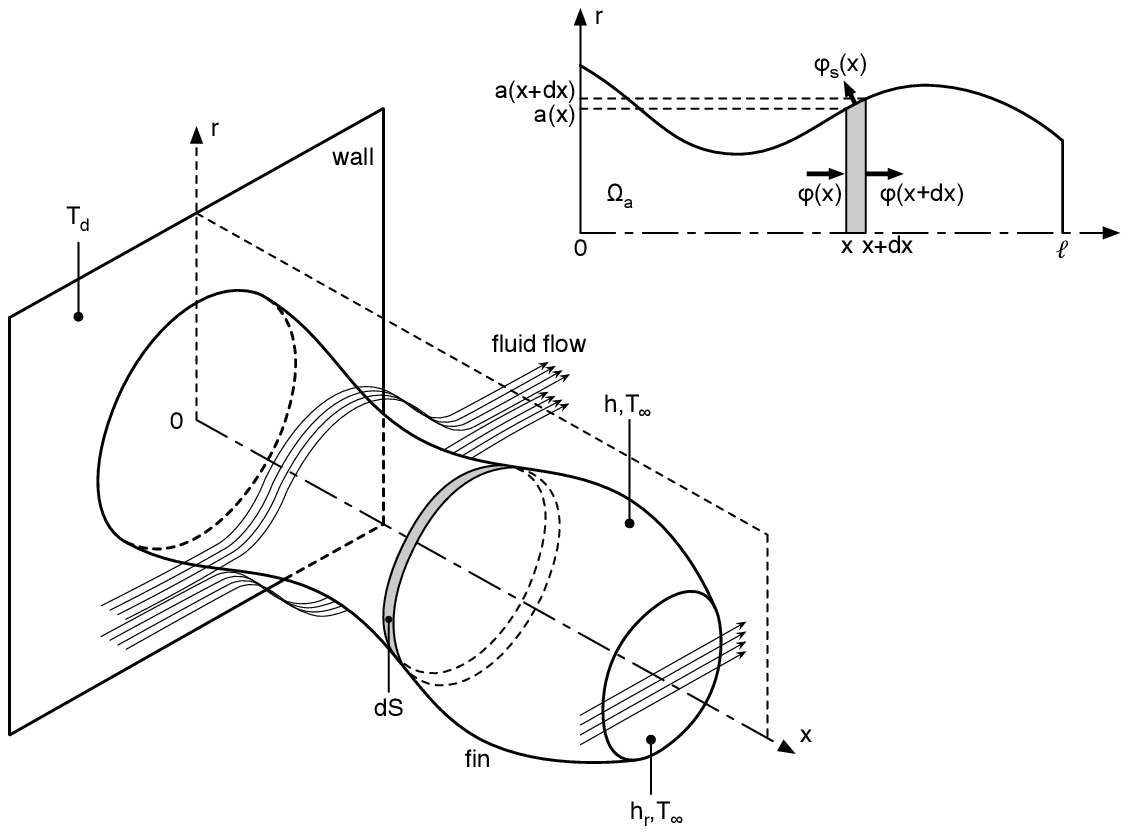}
	\end{center}
	\caption{Scheme of the axisymmetric fin}
	\label{Fig:fin}
\end{figure}

Some physical explanations about the derivation of the temperature model may be found in \cite{Bergman,Welty}. Introduce the convective coefficient $h>0$ modeling the heat transfer between the fin surface and the fluid flow, the convective coefficient $h_r>0$ characterizing the heat transfer over the tip, and $k>0$ the thermal conductivity of the fin.

The inlet of the fin, as well as the fluid surrounding the fin are assumed to be at a constant temperatures, denoted respectively $T_d$ and $T_{\infty}$. Considering processes where the fin aims at cooling a thermal system, \textit{i.e.} where the heat flows from its basis towards the fluid, we will assume that $0<T_{\infty}<T_{d}$.

Then, the temperature $T$ along the fin is solution of the following ordinary differential equation (see ~\cite{Bergman,Welty})
\begin{equation}\label{eq:T}
\begin{array}{ll}
(a^2(x)T'(x))'= \beta a(x)\sqrt{1+a'(x)^2}(T(x)-T_{\infty}) & x\in (0,\ell)\\
T(0)=T_{d} & \\
T'(\ell) = -\beta_r(T(\ell)-T_{\infty}), & 
\end{array}
\end{equation}
where $\beta=2h/k$ is a positive real constant and $\beta_{r}=h_{r}/k$ is a nonnegative real constant.

\begin{remark} \label{rek:model}
Let us comment on this model and the assumptions we made on the physical parameters $k$ and $h$.
\begin{itemize}
\item From a rigorous point of view, the thermal conductivity $k$ depends on the temperature $T$. However, if the operating temperature range where the fin takes place is small enough,  it is reasonable to consider $k$ as a constant parameter. Indeed, knowing that $T_{\infty} \leq T(x) \leq T_d$ as proved in lemma~\ref{lemma1:T}, this hypothesis is valid if the variation of the thermal conductivity between $k(T_{\infty})$ and $k(T_d)$ is small. As a matter of fact, let consider a fin made of copper with working temperatures ranging from $0^{\circ}C$ to $100^{\circ}C$: the order of magnitude of the copper thermal conductivity variation is about $2\%$. This assumption is widely used in the dedicated scientific/engineering literature~\cite{Bergman,LiawYeh1994,MarinElliottHeggsInghamLesnicWen2004,Welty}.
\item The same conclusion holds for the convective coefficient $h$, by assuming that it obeys Newton's law, which means that it does not depend on the difference between $T_{\infty}$ and the fin surface temperature. In the absence of radiative phenomena, this is a standard assumption for several fin models in literature~\cite{Bergman,ChiuChen2002,MarinElliottHeggsInghamLesnicWen2004,Welty}.
\end{itemize}
\end{remark}

Since the function $a$ is assumed to satisfy the assumptions $\Hun$ and $\Hdeux$, System \eqref{eq:T} has a unique solution $T\in H^1(0,\ell)$ by virtue of the Lax-Milgram lemma. The following lemma provides the monotonicity of this solution.
\begin{lemma}\label{lemma1:T}Let $a$ be a function satisfying the assumptions $\Hun$ and $\Hdeux$. Then, the solution $T(\cdot)$ of \eqref{eq:T} is decreasing, and satisfies
$$
T_{\infty}\leq T(x)\leq T_{d}
$$
for every $x\in [0,\ell]$.
\end{lemma}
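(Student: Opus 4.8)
The plan is to rewrite the problem in terms of $u:=T-T_{\infty}$ and to argue on the weak (variational) formulation of \eqref{eq:T}, since under $\Hun$ the coefficient $q:=\beta a\sqrt{1+(a')^2}$ belongs only to $L^{\infty}(0,\ell)$, which rules out a classical strong maximum principle. Writing $p:=a^2$, assumption $\Hdeux$ gives $p\geq a_0^2>0$ and $q\geq \beta a_0>0$ almost everywhere, and in one dimension $T\in H^1(0,\ell)\hookrightarrow C([0,\ell])$. Reading the equation as $(p\,u')'=q\,u$ with right-hand side $q\,u\in L^{\infty}(0,\ell)$, the flux $\phi:=p\,u'=a^2T'$ lies in $H^1(0,\ell)\hookrightarrow C([0,\ell])$; this continuity of the flux is what makes the pointwise monotonicity arguments rigorous in spite of the low regularity of $a'$. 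The weak formulation reads: for every $\varphi\in H^1(0,\ell)$ with $\varphi(0)=0$,
\begin{equation*}
\int_0^\ell p\,u'\varphi'\,dx+\int_0^\ell q\,u\,\varphi\,dx+a^2(\ell)\,\beta_r\,u(\ell)\,\varphi(\ell)=0,
\end{equation*}
the last term encoding the Robin condition at $x=\ell$.

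First I would prove the lower bound $T\geq T_{\infty}$, that is $u\geq0$, by a weak maximum principle. Since $u(0)=T_d-T_{\infty}>0$, the negative part $u^-:=\max(-u,0)$ belongs to $H^1(0,\ell)$ and vanishes at $0$, hence is an admissible test function. Choosing $\varphi=u^-$, the three resulting terms are each nonpositive: $\int_0^\ell p\,u'(u^-)'\,dx=-\int_{\{u<0\}}p\,(u')^2\,dx\leq0$, $\int_0^\ell q\,u\,u^-\,dx=-\int_{\{u<0\}}q\,u^2\,dx\leq0$, and $a^2(\ell)\beta_r u(\ell)u^-(\ell)\leq0$ because $\beta_r\geq0$. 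As their sum vanishes, each must vanish; using $q\geq\beta a_0>0$ in the second one forces $|\{u<0\}|=0$, whence $u\geq0$ and $T\geq T_{\infty}$ on $[0,\ell]$.

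Then I would deduce both the monotonicity and the upper bound from the sign of the flux. Knowing $u\geq0$, the equation gives $\phi'=(a^2T')'=q\,u\geq0$, so the continuous flux $\phi$ is nondecreasing on $[0,\ell]$. The Robin condition yields $\phi(\ell)=a^2(\ell)T'(\ell)=-a^2(\ell)\beta_r(T(\ell)-T_{\infty})\leq0$ since $T(\ell)\geq T_{\infty}$ and $\beta_r\geq0$. Hence $\phi\leq\phi(\ell)\leq0$ on $[0,\ell]$, and as $a^2>0$ this means $T'\leq0$, so $T$ is nonincreasing; in particular $T(x)\leq T(0)=T_d$, which completes the two-sided bound $T_{\infty}\leq T\leq T_d$.

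Finally, to upgrade ``nonincreasing'' into ``decreasing'', I would invoke uniqueness for the Cauchy problem. Rewriting \eqref{eq:T} as the first-order system $y_1'=y_2/a^2$, $y_2'=q(y_1-T_{\infty})$ for $(y_1,y_2)=(T,a^2T')$, the right-hand side is Carath\'eodory and globally Lipschitz in $(y_1,y_2)$ because $a^2$ is bounded and bounded below by $a_0^2$ and $q\in L^{\infty}$; hence the initial value problem has a unique solution. If $\phi(x_0)=0$ at some interior $x_0<\ell$, then monotonicity together with $\phi\leq0$ forces $\phi\equiv0$, and thus $T\equiv T_{\infty}$, on $[x_0,\ell]$, so that uniqueness from $x_0$ yields $T\equiv T_{\infty}$, contradicting $T(0)=T_d$. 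Therefore $\phi<0$ on $[0,\ell)$ and $T$ is strictly decreasing. The delicate point throughout is exactly the weak regularity of the coefficients under $\Hun$: it forbids pointwise second-derivative arguments and forces every step---sign of $u$, monotonicity, and strictness---to be carried out through the variational formulation and the continuity of the flux $a^2T'$.
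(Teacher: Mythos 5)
Your proof is correct, and it takes a genuinely different route from the paper's. The paper first applies the classical Sturm--Liouville change of variable $y=\int_0^x dt/a(t)^2$, converting \eqref{eq:T} into the normal form \eqref{eq:S}, $S''=\beta\rho S$, and then argues pointwise: at the largest zero $y_0$ of $S$ one has $S'(y_0)\neq 0$ by Cauchy--Lipschitz, and either sign of $S'(y_0)$ produces a convex--increasing or concave--decreasing behaviour on $(y_0,\ell_1)$ incompatible with the Robin condition at $\ell_1$; hence $S>0$, so $S$ is strictly convex, and the boundary condition then forces $S'\leq 0$, i.e.\ $S$ decreasing. You never change variables: you obtain the lower bound $T\geq T_\infty$ by a Stampacchia-type truncation (testing the variational formulation with $u^-$, which is exactly where the sign hypotheses $q\geq \beta a_0>0$ and $\beta_r\geq 0$ enter), the monotonicity and the upper bound from the fact that the flux $\phi=a^2T'$ is continuous, nondecreasing and nonpositive, and the strict decrease from backward uniqueness for the first-order system --- the same Cauchy--Lipschitz ingredient the paper uses, deployed at a different spot. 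The trade-off is this: the paper's transform reduces everything to elementary convexity and is the device already exploited in \cite{henrotPrivat,privat} (cf.\ Remark \ref{rk:comp}), but it is intrinsically one-dimensional; your energy/flux argument is slightly heavier, yet it is insensitive to the low regularity of the coefficients (only the distributional equation and the continuity of $a^2T'$ are used), it applies verbatim to the $x$-dependent coefficient $\beta(\cdot)$ of Section \ref{sec:gal_model}, and it is the form of the argument that would survive for the multi-dimensional model \eqref{eq:3Dcyl}, where no Liouville transform is available.
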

\begin{proof}
Let us use the standard change of variable for Sturm-Liouville problems (see for example \cite{coxlip}) $y=\int_{0}^x\frac{dt}{a(t)^2}$. Setting $\ell_{1}=\int_{0}^\ell \frac{dt}{a(t)^2}$ and $S(y)=T(x)-T_{\infty}$ for every $x\in [0,\ell]$, it follows from \eqref{eq:T} that 
$S$ is solution of the boundary value problem
\begin{equation}\label{eq:S}
\begin{array}{ll}
S''(y)= \beta \rho(y)S(y) & y\in (0,\ell_{1})\\
S(0)=T_{d}-T_{\infty} & \\
S'(\ell_{1})=-\beta_{r}a(\ell)^2S(\ell_{1}), & 
\end{array}
\end{equation}
where $\rho(y)=a(x)^3\sqrt{1+a'(x)^2}$ for every $x\in [0,\ell]$. Let us now prove that the function $S(\cdot)$ remains positive on $[0,\ell_{1}]$. 
We denote by $y_{0}$ the largest zero of $S(\cdot)$ on $[0,\ell_{1}]$ whenever it exists. One has necessarily $S'(y_0)\neq 0$. Otherwise, the Cauchy-Lipschitz theorem 
would immediately yield the contradiction. In particular, $y_0\neq \ell_1$.
If $S'(y_{0})>0$, then $S$ is positive on $(y_0,\ell_1)$ since $y_0$ is the largest zero of $S$, and thus $S(\cdot)$ is strictly convex and increasing on $(y_{0},\ell_{1})$,
which is incompatible with the boundary 
condition at $\ell_{1}$. In the same way, if $S'(y_{0})<0$, $S(\cdot)$ is concave negative and decreasing on $[y_{0},\ell_{1}]$ and the same conclusion follows. As a result, 
the function $S(\cdot)$ is positive on $[0,\ell_{1}]$, and hence strictly convex according to \eqref{eq:S}. The function $S(\cdot)$ is thus decreasing according to the boundary condition at 
$\ell_{1}$. The conclusion follows.
\end{proof}
\subsection{The cost functional $F(a)$}\label{sec:Fa}
The cost functional $F(a)$ stands for the heat flux at the inlet of the fin. It is defined by
\begin{equation}\label{def:F(a)}
F(a)=-k\pi a(0)^2T'(0),
\end{equation} 
where $T(\cdot)$ denotes the solution of \eqref{eq:T}.
Integrating the main equation of \eqref{eq:T} yields the new (integral) expression 
\begin{equation}\label{def:F(a)2}
F(a)=k\pi \beta \int_{0}^\ell a(x)\sqrt{1+a'(x)^2}(T(x)-T_{\infty})\, dx+k\pi \beta_{r}a(\ell)^2(T(\ell)-T_{\infty}).
\end{equation}

In the forthcoming analysis of the optimal design problems we will investigate, we will need to use continuity properties of the cost functional $F$. Let us write precisely these properties. Define the class of admissible designs
$$
\mathcal{A}_{a_{0},\ell}=\left\{a\in W^{1,\infty}(0,\ell), \ a\geq a_{0}\textrm{ a.e. in }(0,\ell)\right\},
$$
and the product space $\widehat{\mathcal{A}}_{a_{0},\ell}$ defined by
$$
\widehat{\mathcal{A}}_{a_{0},\ell}=\left\{(a,b), \ a\in \mathcal{A}_{a_{0},\ell}\textnormal{ and }b=a\sqrt{1+a'^2}\right\}.
$$
Introduce the functional $\widehat F$ defined on $\widehat{\mathcal{A}}_{a_{0},\ell}$ by
\begin{equation}\label{defhatF1}
\widehat F(a,a\sqrt{1+a'^2})=F(a),
\end{equation}
for every $a\in \mathcal{A}_{a_{0},\ell}$. Here and in the sequel, the notation $\mathcal{M}(0,\ell)$ stands for the space of Radon measures on $(0,\ell)$. 
\begin{definition}
Let $(a_{n},b_{n})_{n\in\N}$ be a sequence of elements of $\widehat{\mathcal{A}}_{a_{0},\ell}$. We will say that $(a_{n},b_{n})_{n\in\N}$ $\tau$-converges to $(a,b)\in C^{0}([0,\ell])\times \mathcal{M}(0,\ell)$ if
\begin{itemize}
\item $(a_{n})_{n\in\N}$ converges to $a$, locally uniformly in $(0,\ell]$;
\item $(b_{n})_{n\in\N}$ converges to $b$ in the sense of measures.
\end{itemize}
\end{definition}
We endow $\widehat{\mathcal{A}}_{a_{0},\ell}$ with the topology inherited from the $\tau$-convergence. One has the following continuity result of the cost functional $F$.
\begin{proposition}\label{prop:FC0}
Let $(a_{n},b_{n})_{n\in\N}$ be a sequence of elements of $\widehat{\mathcal{A}}_{a_{0},\ell}$ which $\tau$-converges to $(a,b)\in C^{0}([0,\ell])\times \mathcal{M}(0,\ell)$. Then, the sequence $(F(a_{n}))_{n\in\N}$ converges to $\widehat F(a,b)$ defined by
\begin{equation}\label{hatF}
\widehat F(a,b)=k\pi \beta \langle b,T-T_{\infty}\rangle_{\mathcal{M}(0,\ell),C^0([0,\ell])}+k\pi \beta_{r}a(\ell)^2(T(\ell)-T_{\infty}),
\end{equation}
where $T=\tilde T+T_{d}$, and $\tilde T$ is the unique solution of the equation written under variational form: find $\tilde T\in H^1(0,\ell)$ satisfying $\tilde T(0)=0$ such that for every test function $\varphi\in H^1(0,\ell)$ satisfying $\varphi(0)=0$, one has
\begin{equation}\label{eq:Tbis}
\int_{0}^\ell a(x)^2\tilde T'(x)\varphi'(x)\, dx +\beta\langle b, (\tilde T+T_{d}-T_{\infty})\varphi\rangle_{\mathcal{M}(0,\ell),C^0([0,\ell])}+\beta_{r}a(\ell)^2(\tilde T(\ell)+T_{d}-T_{\infty})\varphi(\ell)=0.
\end{equation}
\end{proposition}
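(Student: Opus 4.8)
The plan is to pass to the limit in the variational formulation of the state equation, the main difficulty being that $a_n$ is only assumed to converge locally uniformly on $(0,\ell]$ and may a priori blow up near $x=0$. I would first reformulate the problem and gather uniform estimates. Writing $\tilde T_n := T_n-T_d$, where $T_n$ solves \eqref{eq:T} for $a_n$, Lemma~\ref{lemma1:T} gives $\|\tilde T_n\|_{L^\infty}\leq T_d-T_\infty$ uniformly. Since $(b_n)$ converges in the sense of measures to the finite measure $b$, the total masses $\int_0^\ell b_n$ are uniformly bounded (Banach--Steinhaus). The key object is the flux $p_n:=a_n^2T_n'$: integrating \eqref{eq:T} and using the boundary condition at $\ell$ gives $p_n(x)=-\beta_r a_n(\ell)^2(T_n(\ell)-T_\infty)-\beta\int_x^\ell b_n(T_n-T_\infty)$, so $(p_n)$ is bounded in $L^\infty(0,\ell)$, and even in $BV(0,\ell)$ since $p_n'=\beta b_n(T_n-T_\infty)$ has uniformly bounded total variation. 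Because $a_n\geq a_0$, this yields $\int_0^\ell(\tilde T_n')^2=\int_0^\ell p_n^2/a_n^4\leq a_0^{-4}\|p_n\|_{L^\infty}^2\,\ell$, hence a uniform $H^1$ bound on $\tilde T_n$.

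Next I would extract limits. Up to a subsequence, $\tilde T_n\rightharpoonup\tilde T$ weakly in $H^1(0,\ell)$, and by the compact embedding $H^1(0,\ell)\hookrightarrow C^0([0,\ell])$ this convergence is uniform on the closed interval; in particular $T_n-T_\infty\to T-T_\infty$ uniformly on $[0,\ell]$, with $T=\tilde T+T_d$. By Helly's selection theorem $(p_n)$ converges, up to a subsequence, pointwise almost everywhere and in every $L^q$, $q<\infty$, to some bounded $p$. On each compact $[\delta,\ell]$ one has $a_n\to a$ uniformly with $a\geq a_0$, so $\tilde T_n'=p_n/a_n^2\to p/a^2$ almost everywhere on $(0,\ell)$; comparing this with the weak $L^2$ limit of $\tilde T_n'$, which equals $\tilde T'$ since $\tilde T_n\rightharpoonup\tilde T$ in $H^1$, identifies $p=a^2\tilde T'$.

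Then I would pass to the limit term by term in the weak formulation satisfied by $\tilde T_n$, namely $\int_0^\ell a_n^2\tilde T_n'\varphi'+\beta\int_0^\ell b_n(\tilde T_n+T_d-T_\infty)\varphi+\beta_r a_n(\ell)^2(\tilde T_n(\ell)+T_d-T_\infty)\varphi(\ell)=0$ for all $\varphi\in H^1(0,\ell)$ with $\varphi(0)=0$. The first term is rewritten as $\int_0^\ell p_n\varphi'$ and converges to $\int_0^\ell p\varphi'=\int_0^\ell a^2\tilde T'\varphi'$ by dominated convergence; this is exactly where the flux substitution removes the possible blow-up of $a_n$ near $0$. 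The second term converges to $\beta\langle b,(\tilde T+T_d-T_\infty)\varphi\rangle$ by the elementary lemma stating that $\int f_n\,d\mu_n\to\int f\,d\mu$ whenever $f_n\to f$ uniformly on $[0,\ell]$ and $\mu_n\rightharpoonup\mu$ with uniformly bounded masses, and the boundary term converges because $a_n(\ell)\to a(\ell)$ and $T_n(\ell)\to T(\ell)$. Thus $\tilde T$ solves \eqref{eq:Tbis}. Coercivity of the associated bilinear form on $\{u\in H^1(0,\ell):u(0)=0\}$, using $a\geq a_0$, the Poincaré inequality, and $b\geq 0$ as a limit of nonnegative measures, guarantees uniqueness of the solution of \eqref{eq:Tbis}; hence the limit is independent of the subsequence and the whole sequence converges. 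Finally, applying the same two convergence principles to the expression \eqref{def:F(a)2} of $F(a_n)$, the product lemma for the volumic term and uniform (resp. local uniform) convergence for the tip term, yields $F(a_n)\to\widehat F(a,b)$ as defined in \eqref{hatF}.

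The main obstacle is the term $\int_0^\ell a_n^2\tilde T_n'\varphi'$: since only $\int_0^\ell a_n\leq\int_0^\ell b_n$ is controlled, $a_n^2$ may be unbounded in $L^\infty$ near $x=0$, so one cannot pass to the limit naively through $a_n^2\varphi'\to a^2\varphi'$ in $L^2$. Rewriting this term via the uniformly $BV$-bounded flux $p_n=a_n^2\tilde T_n'$ is what makes the limit tractable, and it also explains why the $\tau$-convergence only requires uniform convergence of $a_n$ away from the base.
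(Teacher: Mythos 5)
Your proof is correct, and its overall skeleton --- uniform bound on $T_n$, extraction of a weak-$H^1$ limit which is also uniform on $[0,\ell]$ by compact embedding, passage to the limit in the variational formulation, identification of the limit equation, and finally convergence of $F(a_n)$ by testing the integral expression \eqref{def:F(a)2} against the weakly-* convergent measures $b_n$ --- is the same as the paper's. The difference lies in the technical core, and it is a genuine one. The paper obtains the $H^1$ bound by multiplying the equation by $T_n-T_\infty$ and integrating by parts (an energy estimate, where your route goes through the flux bound $\|p_n\|_{L^\infty}$ combined with $a_n\geq a_0$), and then simply asserts that ``the conclusion follows easily, passing to the limit into this variational formulation'', leaving the convergence of the diffusion term $\int_0^\ell a_n^2\tilde T_n'\varphi'\,dx$ unjustified. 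That term is exactly the obstacle you isolate: since the $\tau$-convergence only gives locally uniform convergence of $a_n$ on $(0,\ell]$, the coefficient $a_n^2$ may blow up at the base --- and it genuinely does along the maximizing sequences of Theorem \ref{thpb1D:vol}, where $a_n(0)\to+\infty$ --- so one cannot pass to the limit through strong convergence of $a_n^2\varphi'$. Your substitution $p_n=a_n^2T_n'$, with the uniform $L^\infty\cap BV$ bound coming from integrating the equation, Helly compactness, the identification $p=a^2\tilde T'$ on compact subsets of $(0,\ell]$, and then $\int_0^\ell p_n\varphi'\to\int_0^\ell a^2\tilde T'\varphi'$ by dominated convergence, supplies precisely the missing justification (and as a by-product upgrades $\tilde T_n'\to\tilde T'$ to strong $L^2$ convergence). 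Likewise, your coercivity/uniqueness argument for \eqref{eq:Tbis}, which converts subsequential convergence into convergence of the whole sequence, is only implicit in the paper's phrase ``the unique solution''. In short, the paper's proof is shorter but elides the one delicate step; your proof pays for completeness with the extra BV/Helly machinery, and in doing so explains why the $\tau$-topology only needs to control $a_n$ away from $x=0$.
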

\begin{remark}
It follows from Proposition \ref{prop:FC0} that the functional $\widehat{F}$ defined (with a slight abuse of notation) by \eqref{hatF} on the closure of $\widehat{\mathcal{A}}_{a_{0},\ell}$ for the topology associated to the $\tau$-convergence is a continuous extension of the function $\widehat F$ defined by \eqref{defhatF1}.
\end{remark}

\begin{proof}[Proof of Proposition \ref{prop:FC0}]
Consider a sequence $(a_{n},b_{n})_{n\in\N}$ of elements of $\widehat{\mathcal{A}}_{a_{0},\ell}$ as in the statement of Proposition \ref{prop:FC0}. Denote by $T_{n}$ the associated solution of \eqref{eq:T}. Let us multiply the main equation of \eqref{eq:T} by $T_{n}-T_{\infty}$ and then integrate by parts. One gets
\begin{eqnarray*}
a_{0}\min\{\beta,a_{0}\}\Vert T_{n}-T_{\infty}\Vert_{H^1(0,\ell)}^2 & \leq &\int_{0}^\ell \left(a_{n}(x)^2T_{n}'(x)^2+\beta b_{n}(x)(T_{n}(x)-T_{\infty})^2\right)\, dx\\
& = & -\beta_{r}a_{n}(\ell)^2(T_{n}(\ell)-T_{\infty})^2-a_{n}(0)^2T_{n}'(0)(T_{d}-T_{\infty}).
\end{eqnarray*}
Integrating Equation \eqref{eq:T} on $(0,\ell)$ yields 
$$
-\beta_{r}a_{n}(\ell)^2(T_{n}(\ell)-T_{\infty})-a_{n}(0)^2T_{n}'(0)=\beta \int_{0}^\ell b_{n}(x)(T_{n}(x)-T_{\infty})\, dx
$$ 
and according to Lemma \ref{lemma1:T}, it follows that
$$
a_{0}\min\{\beta,a_{0}\}\Vert T_{n}-T_{\infty}\Vert_{H^1(0,\ell)}^2\leq \beta_{r}\Vert a_{n}\Vert_{\infty}^2(T_{d}-T_{\infty})^2+ \beta (T_{d}-T_{\infty})^2\langle b_{n},1\rangle_{\mathcal{M}(0,\ell),C^0([0,\ell])} .
$$
Since $(a_{n},b_{n})_{n\in\N}$ $\tau$-converges to $(a,b)$, we deduce from the previous estimate that the sequence $(T_{n})_{n\in\N}$ is uniformly bounded in $H^1(0,\ell)$. Hence, using a Rellich theorem, there exists $T^*\in H^1(0,\ell)$ such that, up to a subsequence, $(T_{n})_{n\in\N}$ converges to $T^*$, weakly in $H^1(0,\ell)$ and strongly in $L^2(0,\ell)$. Introduce for every $n\in\N$, the function $\tilde T_{n}:=T_{n}-T_{d}$. The function $\tilde T_{n}$ is the unique solution of the system whose variational form writes: find $\tilde T_{n}\in H^1(0,\ell)$ such that $\tilde T_{n}(0)=0$ and for every test function $\varphi\in H^1(0,\ell)$ satisfying $\varphi(0)=0$, one has
$$
\int_{0}^\ell \left(a_{n}(x)^2\tilde T_{n}'(x)\varphi'(x) +\beta b_{n}(x)(\tilde T_{n}(x)+T_{d}-T_{\infty})\varphi(x)\right)\, dx=-\beta_{r}a_{n}(\ell)^2(\tilde T_{n}(\ell)+T_{d}-T_{\infty})\varphi(\ell).
$$
The conclusion follows hence easily, passing to the limit into this variational formulation, and noting that
$$
\frac{\widehat{F}(a_{n},b_{n})}{k\pi}=- a_{n}(0)^2T_{n}'(0)= \beta \int_{0}^\ell b_{n}(x)(T_{n}(x)-T_{\infty})\, dx+\beta_{r} a_{n}(\ell)^2(T_{n}(\ell)-T_{\infty}),
$$
since the injection $H^1(0,\ell)\hookrightarrow C^0([0,\ell])$ is compact.
\end{proof}

\section{Optimal design problems for a simplified model of fin}\label{sec:Optim8}
\subsection{The optimal design problems}\label{sec:odp1}
In this section we will introduce and solve the problems modeling the optimal shape of a fin for the Sturm-Liouville model \eqref{eq:T}. We will consider two kinds of constraints: a volume constraint or a lateral surface constraint. Let us define the volume and lateral surface functionals by
$$
\operatorname{vol}(a)=\int_{0}^\ell a(x)^2\, dx\quad \textnormal{ and }\quad \operatorname{surf}(a)=\int_{0}^\ell a(x)\sqrt{1+a'(x)^2}\, dx.
$$
We investigate the problem of maximizing the functional $a\mapsto F(a)$ defined by \eqref{def:F(a)} with a volume or a lateral surface constraint\footnote{Recall that
$$
\textnormal{volume of }\Omega_{a}=\displaystyle \pi \int_{0}^\ell a(x)^2 \, dx,\quad\textrm{and}\quad
\textnormal{lateral surface of }\Omega_{a}=\displaystyle  2\pi \int_{0}^\ell a(x)\sqrt{1+a'(x)^2} \, dx.
$$
}.

\begin{quote}
\noindent{\bf Optimal design problem with volume or lateral surface constraint.}
\textit{Let $a_{0}$ and $\ell$ denote two positive real numbers. Fix $V_{0}>\ell a_{0}^2$ and $S_{0}> \ell a_{0}$. The optimal design problem with volume constraint writes
\begin{equation}\label{defV}
\sup \{F(a),\ a\in \mathcal{V}_{a_{0},\ell,V_{0}}\},
\end{equation}
where
$$
\mathcal{V}_{a_{0},\ell,V_{0}}=\left\{a\in W^{1,\infty}(0,\ell), \ a\geq a_{0}\textnormal{ a.e. in }[0,\ell]\textnormal{ and }\pi \operatorname{vol}(a)\leq \pi V_{0}\right\}
$$
and the optimal design problem with lateral surface constraint writes
\begin{equation}\label{defS}
\sup \{F(a),\ a\in \mathcal{S}_{a_{0},\ell,S_{0}}\},
\end{equation}
where 
$$
\mathcal{S}_{a_{0},\ell,S_{0}}=\left\{a\in W^{1,\infty}(0,\ell), \ a\geq a_{0}\textnormal{ a.e. in }[0,\ell]\textnormal{ and }2\pi \operatorname{surf}(a)\leq 2\pi S_{0}\right\},
$$
}
\end{quote}

\begin{remark}
Physically, the lateral surface constraint can be justified by considering the fluid flowing around the fin. In realistic engineering configurations, the fluid is put into motion by an external equipment, such as a pump or a fan. The pressure and kinetic energies provided by this system are dissipated all through the fluid flow, mainly because of the wall shear stress. In other words, one part of the dissipation is due to the fluid friction against the walls. As a consequence, limiting the wet surface of the fin is a suitable way to reduce the operating cost of the whole thermal system. In addition, limiting the fin surface may also help to reach easier and cheaper shapes to manufacture.
\end{remark}
\begin{remark}\label{rk:comp}
As underlined in the introduction, the optimal design problems settled here look similar to the ones addressed in \cite{henrotPrivat,privat}, devoted to the issue of understanding the nerve fibers shapes by solving an optimization problem. In these works, a Sturm-Liouville operator whose coefficients depended on the shape of the nerve fiber were also introduced and two criteria were considered: a kind of transfer function, and the first eigenvalue of a self-adjoint operator. However, the techniques implemented in the present work are rather different and appear a bit more sophisticated. Indeed, the ideas of the proofs in \cite{henrotPrivat,privat} were all based on the standard change of variable for Sturm-Liouville problems used in Lemma \ref{lemma1:T}, which permitted to consider auxiliary problems and thus get a lower bound of the optimal values. This trick was then used to construct minimizing sequences. Unfortunately, in the present work, we did not manage to adapt such techniques, and the arguments of the proofs rest upon the use of 
particular perturbations that are constructed in the spirit of rearrangement techniques (see Remark \ref{rk:rearrangement} and the proof of Theorem \ref{thpb1D:surf}).
\end{remark}

Before solving these shape optimization problems, let us give some precisions on the topological nature of the classes $\mathcal{V}_{a_{0},\ell,V_{0}}$ and $\mathcal{S}_{a_{0},\ell,S_{0}}$ in $L^\infty(0,\ell)$. 
As it will be highlighted in the proofs of theorems \ref{thpb1D:vol} and \ref{thpb1D:surf}, the classes $\mathcal{V}_{a_{0},\ell,V_{0}}$ and 
$\mathcal{S}_{a_{0},\ell,S_{0}}$ are not closed for the standard strong topology of $W^{1,\infty}(0,\ell)$. The following lemma investigates the $L^{\infty}$-boundedness of the elements of these classes.
\begin{lemma}\label{lemma:10H28}
Let $a_{0}$ and $\ell$ denote two positive real numbers, and let $V_{0}>\ell a_{0}^2$ and $S_{0}> \ell a_{0}$. Then,
\begin{itemize}
\item the class $\mathcal{V}_{a_{0},\ell,V_{0}}$ is not a bounded set of $L^\infty(0,\ell)$,
\item the class $\mathcal{S}_{a_{0},\ell,S_{0}}$ is a bounded set of $L^\infty(0,\ell)$ and for every $a\in \mathcal{S}_{a_{0},\ell,S_{0}}$,
$$
a_{0}\leq a(x)\leq \sqrt{S_{0}^2/\ell^2+4S_{0}}.
$$
\end{itemize}
\end{lemma}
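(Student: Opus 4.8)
The statement splits into two independent claims with opposite flavors, and the heart of the matter is to observe exactly \emph{which} quantities each constraint controls. The volume constraint $\int_0^\ell a^2 \leq V_0$ is an $L^2$-type bound on $a$, which does nothing to prevent tall, thin concentrations; by contrast the surface constraint $\int_0^\ell a\sqrt{1+a'^2} \leq S_0$ simultaneously controls the $L^1$-norm of $a$ (since $\sqrt{1+a'^2}\geq 1$) and the total variation of $a^2$ (since $|(a^2)'| = 2a|a'| \leq 2a\sqrt{1+a'^2}$), and these two facts combine to force an $L^\infty$ bound. So the plan is to establish the first bullet by an explicit construction exploiting concentration, and the second by a pointwise estimate based on the fundamental theorem of calculus applied to $a^2$.

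For the first bullet I would construct a sequence $(a_n)_{n\in\N}\subset\mathcal{V}_{a_0,\ell,V_0}$ with $\Vert a_n\Vert_\infty\to+\infty$. Set $c_0:=V_0-\ell a_0^2>0$, the available volume budget. For a height parameter $H>0$, let $a_{[H]}$ be the piecewise-affine ``tent'' equal to $a_0$ outside an interval $(x_0-\delta,x_0+\delta)\subset(0,\ell)$ centered at $x_0=\ell/2$, and rising linearly to the peak value $a_0+H$ at $x_0$, where the half-width is chosen as
\[
\delta=\delta(H)=\frac{c_0}{2\bigl(2a_0H+H^2\bigr)}.
\]
Each such $a_{[H]}$ lies in $W^{1,\infty}(0,\ell)$ (piecewise affine, finite slopes), satisfies $a_{[H]}\geq a_0$, and a direct estimate gives $\int_0^\ell(a_{[H]}^2-a_0^2)\,dx\leq 2\delta\bigl(2a_0H+H^2\bigr)=c_0$, so that $\operatorname{vol}(a_{[H]})\leq V_0$; for $H$ large the support $(x_0-\delta,x_0+\delta)$ indeed fits inside $(0,\ell)$ since $\delta(H)\to 0$. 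Taking $a_n:=a_{[n]}$ then yields $\Vert a_n\Vert_\infty=a_0+n\to+\infty$ while $a_n\in\mathcal{V}_{a_0,\ell,V_0}$, proving the class is $L^\infty$-unbounded.

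For the second bullet, fix $a\in\mathcal{S}_{a_0,\ell,S_0}$. The lower bound $a(x)\geq a_0$ for every $x$ is immediate from the pointwise constraint together with the continuity of $a$ (recall $W^{1,\infty}(0,\ell)\hookrightarrow C^0([0,\ell])$). For the upper bound, since $\int_0^\ell a\,dx\leq\int_0^\ell a\sqrt{1+a'^2}\,dx\leq S_0$ and $a$ attains its minimum at some $x_\ast\in[0,\ell]$, one has $a(x_\ast)=\min a\leq\frac1\ell\int_0^\ell a\,dx\leq S_0/\ell$. Applying the fundamental theorem of calculus to the Lipschitz function $a^2$ and then the elementary bound $|(a^2)'|=2a|a'|\leq 2a\sqrt{1+a'^2}$ gives, for every $x\in[0,\ell]$,
\[
a(x)^2 = a(x_\ast)^2 + \int_{x_\ast}^{x}(a^2)'(t)\,dt \leq \frac{S_0^2}{\ell^2} + 2\int_0^\ell a(t)\sqrt{1+a'(t)^2}\,dt \leq \frac{S_0^2}{\ell^2} + 2S_0.
\]
Since $S_0^2/\ell^2+2S_0\leq S_0^2/\ell^2+4S_0$, this yields $a(x)\leq\sqrt{S_0^2/\ell^2+4S_0}$ (in fact with the slightly sharper constant $2S_0$), which is the claimed uniform bound.

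Neither step presents a genuine analytic obstacle; the real content is conceptual. The only point requiring care is the calibration $\delta(H)$ in the construction, ensuring the volume budget is respected while the height diverges, and the recognition in the second part that the surface integral dominates both $\int a$ and $\operatorname{TV}(a^2)$. I expect the mildest technical point to be justifying the fundamental theorem of calculus for $a^2$, which is legitimate because each fixed $a\in W^{1,\infty}(0,\ell)$ is Lipschitz, hence absolutely continuous, so that $a^2$ is absolutely continuous as well.
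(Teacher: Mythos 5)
Your proof is correct and follows essentially the same strategy as the paper's: an explicit concentrating sequence (the paper uses a square-root profile peaking at $x=0$, you use a piecewise-affine tent) for the volume case, and for the surface case the two key bounds $a\sqrt{1+a'^2}\geq a$ and $a\sqrt{1+a'^2}\geq a|a'|=\tfrac12|(a^2)'|$. The only substantive difference is that you anchor the fundamental-theorem-of-calculus estimate at the minimum point of $a$ (whose value is controlled by the mean $S_0/\ell$) rather than at $x=0$ as the paper does, which pays the total-variation cost $2S_0$ only once and yields the slightly sharper bound $a(x)\leq\sqrt{S_0^2/\ell^2+2S_0}$, from which the paper's stated bound follows a fortiori.
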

\begin{proof}
First consider the sequence of functions $(a_{n})_{n\geq 1}$ defined by
$$
a_{n}(x)=\left\{\begin{array}{ll}
\sqrt{n-\frac{n(n-a_{0}^2)}{2(V_{0}-a_{0}^2\ell)}x} & \textnormal{if }x\in [0,2(V_{0}-a_{0}^2\ell)/n]\\
a_{0} & \textnormal{if }x\in (2(V_{0}-a_{0}^2\ell)/n,\ell].
\end{array}\right.
$$
By construction, 
$$
\int_{0}^\ell a_{n}(x)^2\, dx=\ell a_{0}^2+\frac{(n-a_{0}^2)(V_{0}-a_{0}^2\ell)}{n}<V_{0},
$$
so that $a_{n}\in \mathcal{V}_{a_{0},\ell,V_{0}}$ for every $n\in\N^*$. Since $a_{n}\in C^0([0,\ell])$ and $a_{n}(0)=\sqrt{n}$, it follows obviously that $\mathcal{V}_{a_{0},\ell,V_{0}}$ is not bounded in $L^\infty(0,\ell)$.

Second, consider $a\in \mathcal{S}_{a_{0},\ell,S_{0}}$. One has
$$
S_{0}\geq \int_{x}^ya(t)\sqrt{1+a'(t)^2}\, dt\geq \int_{x}^ya(t)|a'(t)|\, dt\geq \left|\int_{x}^ya(t)a'(t)\, dt\right|=\frac{1}{2}|a(y)^2-a(x)^2|,
$$
for every $0<x<y<\ell$. It follows that for every $x\in [0,\ell]$,
\begin{equation}\label{ineqa}
a(0)^2-2S_{0}\leq a(x)^2\leq a(0)^2+2S_{0}.
\end{equation}
Using this inequality, one gets
$$
S_{0}\geq \int_{0}^\ell a(t)\, dt\geq \ell\sqrt{a(0)^2-2S_{0}}
$$
and thus, $a(0)^2\leq S_{0}^2/\ell^2+2S_{0}$. The conclusion follows by combining this inequality with \eqref{ineqa}.
\end{proof}
\subsection{Main results: maximizing $F$ with a volume or a surface constraint}\label{sec:mainRes}
In this section, we solve the problems \eqref{defV} and \eqref{defS} which, according to the previous sections, model the optimal shape of a axisymmetric fin with either a volume or a surface constraint. 

The following theorem highlights the ill-posed character of Problem \eqref{defV}, where a volume constraint is imposed.
\begin{theorem}\label{thpb1D:vol}
Let $a_{0}$ and $\ell$ denote two positive real numbers, and let $V_{0}>\ell a_{0}^2$. Problem \eqref{defV} has no solution and
$$
\sup_{a\in \mathcal{V}_{a_{0},\ell,V_{0}}}F(a)=+\infty.
$$
\end{theorem}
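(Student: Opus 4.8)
The plan is to exhibit an explicit maximizing sequence of admissible fin shapes along which the heat flux $F(a_n)$ diverges to $+\infty$. The natural idea is to concentrate a large amount of radius near the inlet $x=0$ on a vanishingly small interval, so that the volume constraint is respected (the concentration interval shrinks fast enough), while the large radius near $x=0$ forces a large heat flux. Indeed, observe from the integral expression \eqref{def:F(a)2} that $F(a)$ is essentially governed by how much lateral surface, weighted by $T-T_\infty$, is available; by making $a$ very large on a tiny interval $[0,\varepsilon_n]$, one simultaneously enlarges the factor $a(x)\sqrt{1+a'(x)^2}$ and keeps $T-T_\infty$ close to its maximal value $T_d-T_\infty$ there, since $x=0$ is precisely where $T$ attains its maximum by Lemma \ref{lemma1:T}.

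First I would reuse the sequence $(a_n)_{n\ge 1}$ already constructed in the proof of Lemma \ref{lemma:10H28} (or a close variant), for which $a_n(0)=\sqrt n\to+\infty$ while $\operatorname{vol}(a_n)<V_0$, so that $a_n\in\mathcal V_{a_0,\ell,V_0}$ for every $n$. The key quantitative step is then to bound $F(a_n)$ from below. The cleanest route is \emph{not} to estimate the integral in \eqref{def:F(a)2} directly (the behaviour of $T_n$ on the concentration zone must be controlled), but rather to go back to the definition $F(a_n)=-k\pi a_n(0)^2 T_n'(0)$ and show that $a_n(0)^2|T_n'(0)|$ blows up. To this end I would establish a lower bound on $|T_n'(0)|$, or equivalently show that $T_n(0)-T_n(\text{some point})$ stays bounded below by a positive constant while the derivative is taken over a shrinking interval. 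Since $T_n(0)=T_d$ and $T_n\ge T_\infty$ everywhere, and the flux $-a_n^2T_n'$ is nondecreasing in $x$ (its derivative is the nonnegative right-hand side of \eqref{eq:T}), we have $-a_n(0)^2T_n'(0)\ge -a_n(x)^2T_n'(x)$ for all $x$; integrating gives $F(a_n)/(k\pi)=-a_n(0)^2T_n'(0)\ge \beta\int_0^\ell b_n(x)(T_n(x)-T_\infty)\,dx$, which reduces the task to bounding this integral below.

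The main obstacle will be controlling $T_n-T_\infty$ from below on the region where $b_n=a_n\sqrt{1+a_n'^2}$ is large, since a priori the temperature could drop sharply across the concentration zone. The honest way to handle this is an energy/comparison argument: I would show that the solution $T_n$ cannot decay too fast near $x=0$, for instance by constructing an explicit subsolution, or by using the boundedness of the $H^1$ norm established in the proof of Proposition \ref{prop:FC0} together with the one-dimensional Sobolev embedding $H^1(0,\ell)\hookrightarrow C^0([0,\ell])$ to get uniform modulus-of-continuity control, and thereby deduce that $T_n(x)-T_\infty$ remains of order $T_d-T_\infty$ on a definite portion of $[0,\varepsilon_n]$. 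Alternatively, and perhaps more robustly, one can choose the profile of $a_n$ so that the \emph{surface} contribution $\int_0^\ell a_n\sqrt{1+a_n'^2}\,dx$ itself diverges (which is permitted here since only the volume is constrained), and combine this with a uniform positive lower bound for $T_n-T_\infty$ on a fixed subinterval bounded away from the tip; the divergence of the surface integral against a positive temperature weight then forces $F(a_n)\to+\infty$.

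Finally, once the lower bound $F(a_n)\to+\infty$ is secured, the nonexistence statement is immediate: no admissible $a$ can achieve an infinite value of $F$, since for fixed admissible $a$ the quantity $F(a)$ is finite (the solution $T$ lies in $H^1(0,\ell)$ and the integrand in \eqref{def:F(a)2} is integrable), so the supremum $+\infty$ is not attained, and the proof concludes.
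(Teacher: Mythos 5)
Your overall strategy --- big radius concentrated near the inlet, volume kept admissible, flux bounded below by the surface integral $\beta\int_0^\ell b_n(T_n-T_\infty)\,dx$ via \eqref{def:F(a)2} --- is sound in principle, but your proposal leaves precisely the hard step unproven, and the one concrete tool you invoke for it does not work. The crux is the uniform lower bound $T_n-T_\infty\geq c>0$ on the shrinking interval $[0,\varepsilon_n]$ where the surface of your bulb concentrates (it is needed there, since off the bulb $b_n=a_0$ contributes only a bounded amount). You propose to get it from the $H^1$ estimate in the proof of Proposition \ref{prop:FC0} plus the embedding $H^1\hookrightarrow C^0$; but that estimate reads
$$
a_0\min\{\beta,a_0\}\,\Vert T_n-T_\infty\Vert_{H^1(0,\ell)}^2\;\leq\;\beta_r\Vert a_n\Vert_\infty^2(T_d-T_\infty)^2+\beta(T_d-T_\infty)^2\operatorname{surf}(a_n),
$$
and for your sequence (the one of Lemma \ref{lemma:10H28}) both $\Vert a_n\Vert_\infty^2\sim n$ and $\operatorname{surf}(a_n)\sim n/2$ blow up, so $\Vert T_n'\Vert_{L^2}\lesssim\sqrt{n}$ and the modulus-of-continuity bound $|T_n(x)-T_d|\lesssim\sqrt{n x}$ only gives $O(1)$ on $[0,\varepsilon_n]$ with $\varepsilon_n\sim 1/n$ --- not smallness, and not a lower bound on $T_n-T_\infty$. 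The alternative routes you mention (``explicit subsolution'', ``energy/comparison argument'') are named but not carried out, so the proposal as written has a genuine gap at its central estimate. (The gap is fixable: using the natural weight, $\int_0^{\varepsilon_n}a_n^2T_n'^2\lesssim n$ together with $\int_0^{\varepsilon_n}a_n^{-2}\,dt=O(\ln n/n^2)$ and Cauchy--Schwarz gives $|T_n(x)-T_d|\lesssim\sqrt{\ln n/n}\to 0$ on the bulb; but this weighted argument is exactly the work that is missing.)

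It is worth contrasting this with how the paper sidesteps the difficulty entirely. The paper's maximizing sequence keeps the radius \emph{uniformly bounded}, $\Vert a_{S,m}-a_0\Vert_\infty\to 0$, and creates a large but \emph{fixed} lateral surface $S$ by rapid oscillations near $x=0$, so that $(a_{S,m},b_{S,m})$ $\tau$-converges to $(a_0,a_0+(S-a_0\ell)\delta_0)$. For each fixed $S$ the hypotheses of Proposition \ref{prop:FC0} hold (bounded $\Vert a_{S,m}\Vert_\infty$ and bounded surface), so the limit of $F(a_{S,m})$ is computed exactly --- it equals $k\pi\beta(T_d-T_\infty)\bigl(a_0^{3/2}\gamma/\sqrt{\beta}+(S-a_0\ell)\bigr)$, with the explicit solution $T$ for $a\equiv a_0$ --- and the blow-up comes only afterwards, from a diagonal argument letting $S=n\to\infty$. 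In other words, the paper trades your single divergent-surface sequence and its delicate temperature estimate for a two-parameter construction in which each inner limit is handled by the already-proven continuity result; no quantitative control of $T_n$ on the concentration zone is ever needed. If you want to keep your bulb construction, you must supply the weighted energy estimate above; otherwise the cleaner path is the paper's: bounded radius, fixed surface per stage, Proposition \ref{prop:FC0}, then diverge the surface parameter.
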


In the next theorem, we prove that imposing a lateral surface constraint on $\Omega_{a}$ is not enough to get the existence of a solution for this problem. Nevertheless, on the contrary to the previous case where a volume constraint were imposed, the value of the supremum of $F$ over the set $\mathcal{S}_{a_{0},\ell,S_{0}}$ is now finite.

\begin{theorem}\label{thpb1D:surf}
Let $a_{0}$ and $\ell$ denote two positive real numbers, and let $S_{0}> \ell a_{0}$. Problem \eqref{defS} has no solution and
$$
\sup_{a\in \mathcal{S}_{a_{0},\ell,S_{0}}}F(a)=k\pi\beta (T_{d}-T_{\infty})\left(\frac{a_{0}^{3/2}\gamma}{\sqrt{\beta}}+(S-a_{0}\ell)\right),
$$
where $\gamma$ denotes the positive real number
\begin{equation}\label{def:gamma} 
\gamma=\frac{\sqrt{\frac{\beta}{a_{0}}}\sinh\left(\sqrt{\frac{\beta}{a_{0}}}\ell\right)+\beta_{r}\cosh\left(\sqrt{\frac{\beta}{a_{0}}}\ell\right)}{\sqrt{\frac{\beta}{a_{0}}}\cosh\left(\sqrt{\frac{\beta}{a_{0}}}\ell\right)+\beta_{r}\sinh\left(\sqrt{\frac{\beta}{a_{0}}}\ell\right)}.
\end{equation}
Moreover, every sequence $(a_{n})_{n\in \N}$ of elements of $\mathcal{S}_{a_{0},\ell,S_{0}}$ such that $(a_{n},a_{n}\sqrt{1+a_{n}'^2})_{n\in \N}$ $\tau$-converges to $(a_{0},a_{0}+(S-a_{0}\ell)\delta_{0})$, where $\delta_{0}$ denotes the Dirac measure at $x=0$, is a maximizing sequence for Problem \eqref{defS}.
\end{theorem}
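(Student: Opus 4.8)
The plan is to establish the three assertions—the value of the supremum, the maximizing property of the announced sequences, and nonexistence—by combining the relaxation furnished by Proposition \ref{prop:FC0} with a rearrangement-type inequality. Write $M:=T_d-T_\infty>0$, $\omega:=\sqrt{\beta/a_0}$, and let $\phi_0$ denote the flux $F$ of the constant fin $a\equiv a_0$. The guiding principle throughout is the monotonicity of Lemma \ref{lemma1:T}: since $0\le T-T_\infty\le M$ with the maximum attained only at $x=0$, lateral surface placed near the inlet is the most efficient, which is what forces the extremal profile to concentrate its excess surface at the origin.

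First I would pin down the candidate value. For $a\equiv a_0$ the function $T-T_\infty$ solves the constant-coefficient problem $S''=\omega^2S$, $S(0)=M$, $S'(\ell)=-\beta_r S(\ell)$, giving $S(x)=M\big(\cosh\omega x-\gamma\sinh\omega x\big)$ with $\gamma$ exactly as in \eqref{def:gamma}, whence $\phi_0=-k\pi a_0^2S'(0)=k\pi a_0^2M\omega\gamma=k\pi\beta M\,a_0^{3/2}\gamma/\sqrt\beta$. Applying Proposition \ref{prop:FC0} to the limit pair $(a_0,\,a_0\,dx+(S-a_0\ell)\delta_0)$, the key observation is that every test function in \eqref{eq:Tbis} vanishes at $x=0$, so the Dirac mass at the origin drops out of the variational problem and the limiting temperature coincides with the constant-fin temperature; meanwhile that same Dirac contributes the free term $k\pi\beta(S-a_0\ell)(T(0)-T_\infty)=k\pi\beta M(S-a_0\ell)$ in \eqref{hatF}. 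Summing yields $\widehat F\big(a_0,\,a_0\,dx+(S-a_0\ell)\delta_0\big)=k\pi\beta M\big(a_0^{3/2}\gamma/\sqrt\beta+(S-a_0\ell)\big)$, the announced value. The maximizing-sequence statement is then immediate from Proposition \ref{prop:FC0}, once the matching upper bound is known: any $(a_n)$ with $(a_n,a_n\sqrt{1+a_n'^2})$ $\tau$-converging to this pair satisfies $F(a_n)\to\widehat F$ of the limit. To guarantee such sequences exist I would exhibit an explicit one—$a_n\equiv a_0$ on $[1/n,\ell]$ with a spike on $[0,1/n]$ whose lateral surface equals $S-a_0\ell$—and check, using the shrinking support of the spike and $\operatorname{surf}(a_n)\le S$, that it $\tau$-converges to the required limit.

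The heart of the argument is the upper bound $F(a)\le k\pi\beta M\big(a_0^{3/2}\gamma/\sqrt\beta+(S-a_0\ell)\big)$ for every $a\in\mathcal S_{a_0,\ell,S_0}$, which, using $\operatorname{surf}(a)\le S$, I would recast as the $S$-independent inequality $F(a)\le \phi_0+k\pi\beta M\big(\operatorname{surf}(a)-a_0\ell\big)$, that is, that the penalized functional $a\mapsto F(a)-k\pi\beta M\operatorname{surf}(a)$ is maximized by the constant fin. Writing $F(a)=k\pi\beta\int_0^\ell a\sqrt{1+a'^2}(T-T_\infty)\,dx+k\pi\beta_r a(\ell)^2(T(\ell)-T_\infty)$ from \eqref{def:F(a)2}, the excess density $a\sqrt{1+a'^2}-a_0\ge0$ paired against $T-T_\infty\le M$ contributes at most $k\pi\beta M(\operatorname{surf}(a)-a_0\ell)$, so it remains to dominate the residual contribution of the \emph{minimal} surface $a_0\,dx$ together with the tip term by $\phi_0$. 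This comparison is the main obstacle: the temperature $T$ of a general $a$ is coupled to $a$ through both the conduction coefficient $a^2$ and the absorption $a\sqrt{1+a'^2}$, so one cannot compare $T$ with the constant-fin profile pointwise. I would treat it with the rearrangement-type perturbations alluded to in Remark \ref{rk:comp}: given $a$, show that flattening the profile toward $a_0$ while transporting the freed lateral surface toward the inlet does not decrease the penalized functional—the gain being exactly the monotonicity $T(x)-T_\infty\le T(0)-T_\infty$—and that iterating these perturbations drives $a$ to $a_0$ with all excess surface concentrated at $x=0$, where the bound is saturated. The $L^\infty$-boundedness from Lemma \ref{lemma:10H28} would be used to keep the competitors in a fixed compact set during this limiting process.

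Finally, nonexistence would follow by showing the bound is strict for admissible $a$. Equality would force simultaneously $\operatorname{surf}(a)=S$, equality in $T-T_\infty\le M$ on the support of the excess density $a\sqrt{1+a'^2}-a_0$, and the reduction of the profile to $a_0$; but $T-T_\infty=M$ only at $x=0$, so the extremal surface would have to be carried entirely by the single point $\{0\}$, which is impossible for an absolutely continuous density $a\sqrt{1+a'^2}\,dx$ with $a\in W^{1,\infty}(0,\ell)$, $a\ge a_0$. Hence the supremum is not attained, while the explicit spike sequences constructed above show that it is approached, which completes the proof.
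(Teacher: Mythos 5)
Your computation of the candidate value $\widehat F\big(a_0,\,a_0\,dx+(S-a_0\ell)\delta_0\big)$ and your reduction of the maximizing-sequence claim to a matching upper bound are correct and coincide with the paper's use of Proposition \ref{prop:FC0}. The genuine gap is in that upper bound, which you yourself call the heart of the argument. Your proposed decomposition --- bound the excess term $\int_0^\ell\big(a\sqrt{1+a'^2}-a_0\big)(T-T_\infty)\,dx$ by $(T_d-T_\infty)\big(\operatorname{surf}(a)-a_0\ell\big)$ and then dominate the residual $k\pi\beta\int_0^\ell a_0(T-T_\infty)\,dx+k\pi\beta_r a(\ell)^2(T(\ell)-T_\infty)$ by $\phi_0$ --- cannot be repaired, because the residual inequality is \emph{false}. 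Take $a\equiv A$ constant with $a_0<A\le S_0/\ell$ and $\beta_r=0$: then $\int_0^\ell(T_A-T_\infty)\,dx=(T_d-T_\infty)\,\ell\,\tanh(z_A)/z_A$ with $z_A=\sqrt{\beta/A}\,\ell$, and since $\tanh(z)/z$ is decreasing in $z$ this integral is strictly increasing in $A$; so the residual of the fin $A$ strictly exceeds $\phi_0$, and for $\beta_r>0$ the tip term $\beta_r A^2(T_A(\ell)-T_\infty)$ makes the violation worse. A thicker fin conducts better and stays warmer, so the ``minimal surface'' $a_0\,dx$ paired with its own temperature gives \emph{more} than $\phi_0$, not less: the true inequality $F(a)\le\phi_0+k\pi\beta(T_d-T_\infty)\big(\operatorname{surf}(a)-a_0\ell\big)$ holds only because the slack in your excess estimate compensates, i.e.\ the two pieces cannot be bounded separately. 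Your fallback --- ``flatten the profile toward $a_0$ while transporting the freed surface to the inlet, and iterate'' --- is exactly the hard step and is left entirely heuristic: no perturbation is constructed, no monotonicity of the penalized functional along the iteration is proved, and no convergence of the iteration is established. Remark \ref{rk:rearrangement} of the paper warns precisely that standard rearrangement arguments could not be made to work here. Since your nonexistence argument is an equality-case analysis of this invalid chain of inequalities, it does not stand on its own either.

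What the paper does instead, and what is missing from your proposal, is a compactification. It introduces the truncated problem \eqref{pbM} with the pointwise constraint $a_0\le a\sqrt{1+a'^2}\le M$; on that class $|a'|\le M/a_0$, so Arzel\`a--Ascoli together with Proposition \ref{prop:FC0} yields an actual maximizer $\underline{a}_M$ (Proposition \ref{prop:bang}). A first-order perturbation argument --- which requires the oscillation construction of Lemma \ref{lemma:thm2} to realize a modified surface density $b_\varepsilon=\underline{b}_M+c(\chi_{[y_0,y_0+\varepsilon]}-\chi_{[x_0-\varepsilon,x_0]})$ by an admissible profile $a_\varepsilon$ with $\Vert a_\varepsilon-\underline{a}_M\Vert_{L^\infty}=\operatorname{O}(\varepsilon^2)$, plus an asymptotic expansion of $T_\varepsilon$ and a choice of test functions reducing the derivative of $F$ to $c\big[(T_M(y_0)-T_\infty)^2-(T_M(x_0)-T_\infty)^2\big]/(T_d-T_\infty)$ --- forces the bang-bang form \eqref{eq:bang}; letting $M\to+\infty$ and invoking Lemma \ref{lemma:gammaCV} and Proposition \ref{prop:FC0} then identifies the supremum. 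Nonexistence for \eqref{defS} is proved by the same perturbation computation applied to a putative maximizer (moving surface mass from an interior point where $a>a_0$ to the inlet strictly increases $F$, by the strict monotonicity of $T$ from Lemma \ref{lemma1:T}), not by an equality-case discussion. In short: your proposal gets the soft parts right (relaxed value, maximizing sequences, overall architecture), but the quantitative core --- the upper bound, and with it nonexistence --- is missing, and the specific inequality you propose to prove it with is false.
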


\begin{remark}\label{rk:rearrangement}
In the proof of Theorem \ref{thpb1D:surf}, the key idea to show the nonexistence result and to construct maximizing sequences lies in applying a particular perturbation of the function $a\sqrt{1+a'^2}$, constructed in the vein of monotone or Schwarz rearrangements but it is not one of then. We then exhibit an element $a\in \mathcal{A}_{a_{0},\ell}$ belonging to the class of admissible designs, that makes the job.

Notice that it is standard in shape optimization to use rearrangements such as the so-called {\it Steiner} or {\it Schwarz symmetrizations}, to characterize the solutions of an optimal design problem (see for instance \cite{kawohl} for the definition and \cite{henrot} for examples of use in shape optimization). Unfortunately, we did not manage to conclude the proof by applying standard rearrangement arguments, which explains why a more subtle analysis is performed to get the nonexistence result.
\end{remark}
\subsection{ Proof of Theorem \ref{thpb1D:vol} (maximization of $F$ with a volume constraint)}\label{sec:optvol}
To prove this theorem, we will exhibit an explicit maximizing sequence $(a_{n})_{n\in\N^*}$. The construction of the sequence $(a_{n})_{n\in\N^*}$ is based on the use of an intermediate sequence denoted $(a_{S,m})_{m\in\N^*}$ where $S$ denotes a positive real number. Fix $S>0$. The sequence $(a_{S,m})_{m\in\N^*}$ is chosen to satisfy at the same time
\begin{itemize}
\item $(a_{S,m})_{m\in\N^*}$ converges strongly to $a_{0}$ in $L^\infty(0,\ell)$,
\item the sequence $(b_{S,m})_{m\in\N^*}$ defined by $b_{S,m}=a_{S,m}\sqrt{1+a_{S,m}'^2}$ for every $m\in\N^*$ satisfies 
$$
\operatorname{surf}(a_{S,m})=\int_{0}^\ell b_{S,m}(x)\, dx=S
$$
for every $m\in\N^*$ and converges in the sense of measures to $a_{0}+(S-a_{0}\ell)\delta_{0}$, where $\delta_{0}$ denotes the Dirac measure at $x=0$.
\end{itemize}
In particular, the sequence $(a_{S,m},a_{S,m}\sqrt{1+a_{S,m}'^2})_{m\in\N^*}$ $\tau$-converges to $(a_{0},a_{0}+(S-a_{0}\ell)\delta_{0})$ as $m$ tends to $+\infty$.

Let us now provide an example of such sequence. A possible choice of function $b_{S,m}$ for $m$ large enough is given by
$$
b_{S,m}(x)=\left\{\begin{array}{ll}
a_{0}+(S-a_{0}\ell)m & \textnormal{if }x\in [0,1/m]\\
a_{0}& \textnormal{if }x\in (1/m,\ell].
\end{array}\right.
$$
Solving the equation $b_{S,m}=a_{S,m}\sqrt{1+a_{S,m}'^2}$ comes to solve
$
\frac{a_{S,m}|a_{S,m}'|}{\sqrt{b_{S,m}^2-a_{S,m}^2}}=1
$
on each interval $(0,1/m)$ and $(1/m,\ell)$. This equation has obviously an infinite number of solutions. In particular, a family of solutions is obtained by making the function $a_{S,m}$ oscillate an integer number of times in $(0,1/m)$, each oscillation corresponding to two successive choices of the sign of $a_{S,m}'$ on two same length intervals. The continuity of each function $a_{S,m}$ determines then it in a unique way. As a consequence, it is possible to control the $L^\infty$-norm of $a_{S,m}$ by choosing artfully the number of oscillations, as made precise below.

Introduce $M_{m}=a_{0}+(S-a_{0}\ell)m$ for every $m\in \N^*$. We now construct the sequence $(a_{S,m})_{n\in\N^*}$ so that each function oscillates $m$ times on $[0,1/m]$ (see Figure \ref{Fig:suitemax1}). More precisely, we define $a_{S,m}$ by
\begin{equation}\label{def:aSm}
a_{S,m}(x)=\left\{\begin{array}{ll}
\sqrt{M_{m}^2-(\sqrt{M_{m}^2-a_{0}^2}-x)^2} & \textrm{ on }\left[0 , \frac{1}{2m^2}\right) \ ; \\
a_{S,m}(\frac{1}{m^2}-x) & \textrm{ on }\left[ \frac{1}{2m^2},\frac{1}{m^2}\right) \ ; \\
a_{S,m}\left(x-\frac{i}{m^2}\right) & \textrm{ on }\left[\frac{i}{m^2},\frac{i+1}{m^2}\right), \  i \in \{ 1,..., m-1\} \ ; \\
a_{0} & \textrm{ on } \left[ \frac{1}{m}, \ell\right].
\end{array}
\right.
\end{equation}
Notice in particular that 
$$
\Vert a_{S,m}-a_{0}\Vert_{\infty}= a_{S,m}\left(\frac{1}{2m^2}\right)-a_{0}=\operatorname{o}\left(\frac{1}{m^2}\right)\quad \textrm{as }m\to +\infty.
$$
\begin{figure}[h!]
\begin{center}
\includegraphics[height=5.7cm]{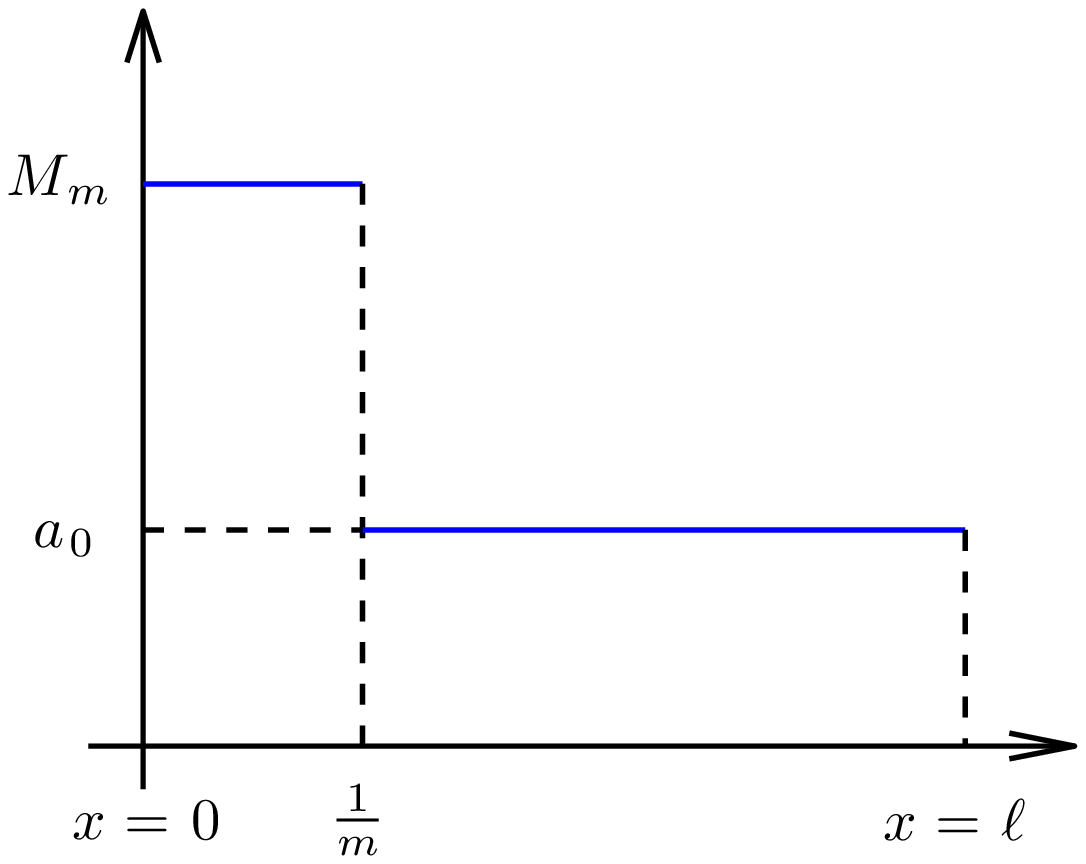}
\includegraphics[height=5cm]{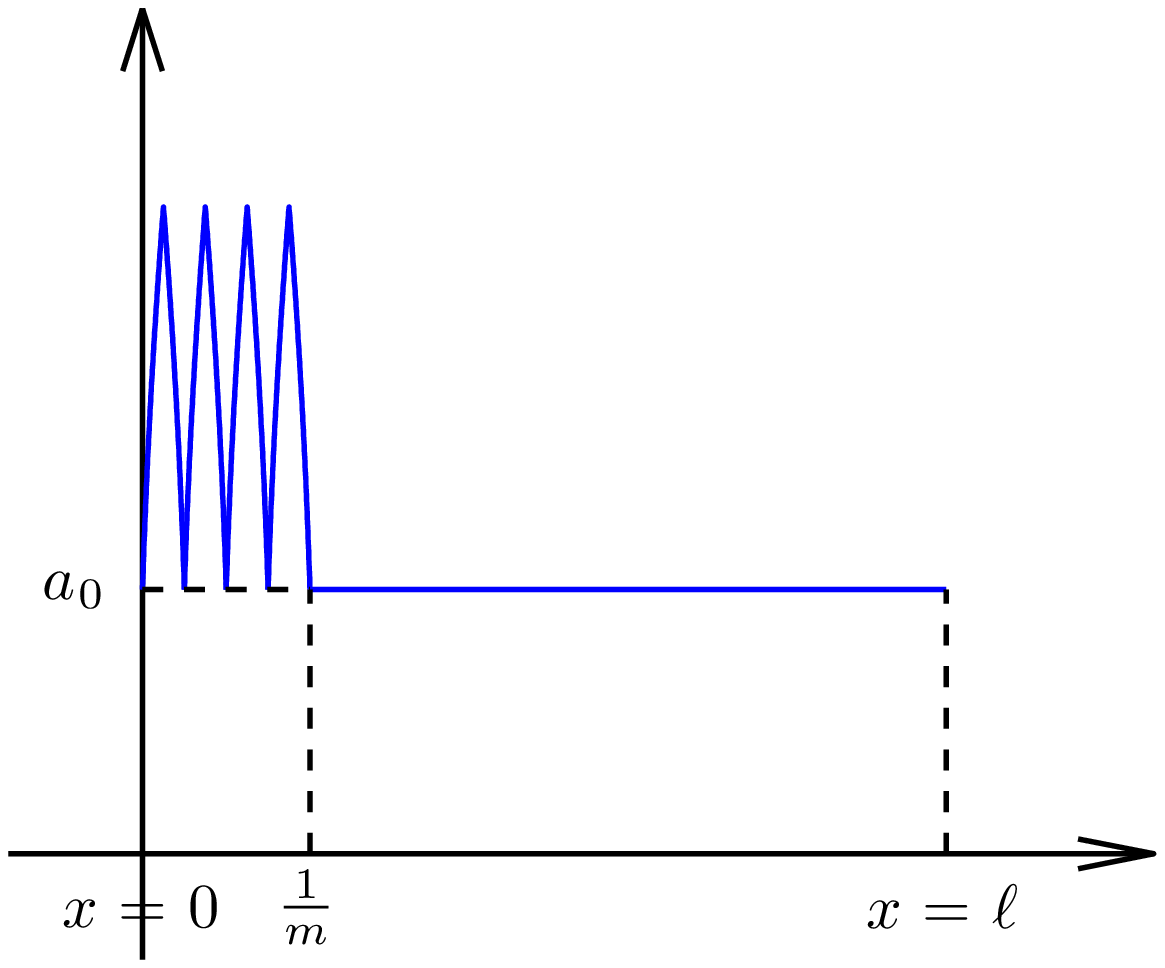}
\caption{Left: graph of $b_{S,m}=a_{S,m}\sqrt{1+a_{S,m}'^2}$. Right: Graph of $a_{S,m}$.}\label{Fig:suitemax1}
\end{center}
\end{figure}
The sequence $(a_{S,m})_{n\in\N^*}$ satisfies then the expected properties above. Moreover, according to Proposition \ref{prop:FC0}, one has
$$
\lim_{m\to +\infty} F(a_{S,m})=\lim_{m\to +\infty}\widehat{F}(a_{S,m},a_{S,m}\sqrt{1+a_{S,m}'^2})=\widehat{F}(a_{0},a_{0}+(S-a_{0}\ell)\delta_{0})
$$
Notice that the solution $T$ of \eqref{eq:Tbis} associated to the pair $(a,b)=(a_{0},a_{0}+(S-a_{0}\ell)\delta_{0})$ coincides with the solution of the same equation associated to the pair $(a,b)=(a_{0},a_{0})$. Explicit computations thus lead to
$$
T(x)=T_{\infty}+(T_{d}-T_{\infty})\left(\cosh\left(\sqrt{\frac{\beta}{a_{0}}}x\right)-\gamma \sinh\left(\sqrt{\frac{\beta}{a_{0}}}x\right)\right),
$$
for every $x\in [0,\ell]$, where $\gamma$ is defined by \eqref{def:gamma}.
We then compute
\begin{eqnarray*}
\lim_{m\to +\infty}\frac{F(a_{S,m})}{k\pi} & = & \beta\langle a_{0}+(S-a_{0}\ell) \delta_{0},T-T_{\infty}\rangle_{\mathcal{M}(0,\ell),C^0([0,\ell])}+\beta_{r}a_{0}^2(T(\ell)-T_{\infty})\\
& = & \beta (T_{d}-T_{\infty})\left(\frac{a_{0}^{3/2}}{\sqrt{\beta}}\gamma+(S-a_{0}\ell)\right).
\end{eqnarray*}
\begin{paragraph}{Conclusion: construction of a maximizing sequence $(a_{n})_{n\in\N^*}$}
Let $n\in \N^*$ such that $n\geq [a_{0}\ell]+1$. Consider the sequence $(a_{n,m})_{m\in \N^*}$ introduced previously. Since $(a_{n,m})_{m\in \N^*}$ converges strongly to $a_{0}$ in $L^\infty(0,\ell)$ and according to the previous convergence study, there exists $p\in \N$ such that $\operatorname{vol}(a_{n,p})\leq V_{0}-\frac{1}{n}$ and
$$ 
\frac{F(a_{n,p})}{k\pi \beta(T_{d}-T_{\infty})}\geq \left(\frac{a_{0}^{3/2}\gamma}{\sqrt{\beta}}+(n-a_{0}\ell)\right)-\frac{1}{n}.
$$
Let us denote by $m_{n}$ the first integer for which this property is verified, and by $a_{n}$ the function equal to $a_{0}$ if $n\leq [a_{0}\ell]$ and by $a_{n,m_{n}}$ else. It follows not only that each element $a_{n}$ belongs to $\mathcal{V}_{a_{0},\ell,V_{0}}$ and that
$$
\sup_{a\in \mathcal{V}_{a_{0},\ell,V_{0}}}\frac{F(a)}{k\pi \beta}\geq \frac{F(a_{n,m_{n}})}{k\pi \beta}\geq (T_{d}-T_{\infty})\left(\frac{a_{0}^{3/2}\gamma}{\sqrt{\beta}}+(n-a_{0}\ell)-\frac{1}{n}\right)
$$
for every $n\geq [a_{0}\ell]+1$. Letting $n$ tend to $+\infty$ yields the conclusion of the theorem.
\end{paragraph}

\subsection{Proof of Theorem \ref{thpb1D:vol} (maximization of $F$ with a lateral surface constraint)}\label{sec:optS}
This proof is divided into several steps. Steps 1, 2 and 3 are devoted to showing that Problem \eqref{defS} has no solution, whereas Step 4 focuses on building maximizing sequences by introducing a king of truncated optimal design problem.
Let us argue by contradiction, assuming that Problem \eqref{defS} has a solution $a\in \mathcal{S}_{a_{0},\ell,S_{0}}$. Let us denote by $b$ the function $a\sqrt{1+a'^2}$. The idea of the proof is to introduce an admissible perturbation $a_{\varepsilon}$ of $a$ in $\mathcal{S}_{a_{0},\ell,S_{0}}$, suitably chosen to guarantee that $F(a_{\varepsilon})>F(a)$ for a given $\varepsilon>0$.  We will proceed in several steps.
\begin{paragraph}{Step 1. Definition of the perturbation $a_{\varepsilon}$}
Since the constant function equal to $a_{0}$ is obviously not a solution of Problem \eqref{defS}, there exists $x_{0}\in (0,\ell)$ and an interval $(x_{0}-\varepsilon/2,x_{0}+\varepsilon/2)$ on which $a>a_{0}$.
Introduce for such a choice of $x_{0}$ and $\varepsilon$, the function
$$
b_{\varepsilon}=b+c(\chi_{[0,\varepsilon]}-\chi_{[x_{0}-\varepsilon/2,x_{0}+\varepsilon+2]}),
$$
where $c$ is a positive constant, chosen small enough to guarantee that $b_{\varepsilon}>a_{0}$ almost everywhere in $(0,\ell)$. We will use the following lemma to construct $a_{\varepsilon}$.
\begin{lemma}\label{lemma:thm2}
There exists a family $(a_{\varepsilon})_{\varepsilon>0}$ such that $a_{\varepsilon}\in \mathcal{S}_{a_{0},\ell,S_{0}}$ for every $\varepsilon>0$,  $a_{\varepsilon}\sqrt{1+a_{\varepsilon}'^2}=b_{\varepsilon}$ for almost every $\varepsilon>0$, and
$$
\Vert a_{\varepsilon}-a\Vert_{L^\infty(0,\ell)}=\operatorname{O}(\varepsilon^2).
$$
\end{lemma}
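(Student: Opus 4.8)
The plan is to build $a_{\varepsilon}$ by solving the first–order relation $a_{\varepsilon}\sqrt{1+a_{\varepsilon}'^2}=b_{\varepsilon}$ ``by hand'', interval by interval, exactly in the spirit of the oscillating construction \eqref{def:aSm} used in the volume case. Outside the two perturbation intervals $[0,\varepsilon]$ and $[x_{0}-\varepsilon/2,x_{0}+\varepsilon/2]$ one simply sets $a_{\varepsilon}=a$: there $b_{\varepsilon}=b=a\sqrt{1+a'^2}$, so the relation already holds. The crucial bookkeeping observation is that both perturbation intervals have length $\varepsilon$, so the two contributions $\pm c$ cancel in the integral and $\operatorname{surf}(a_{\varepsilon})=\int_{0}^{\ell}b_{\varepsilon}=\int_{0}^{\ell}b=\operatorname{surf}(a)\leq S_{0}$; the surface constraint is thus preserved for free, and only the pointwise bound $a_{\varepsilon}\geq a_{0}$ and the $L^{\infty}$ estimate remain. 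It therefore suffices to construct $a_{\varepsilon}$ on each of the two intervals so that it solves $a_{\varepsilon}\sqrt{1+a_{\varepsilon}'^2}=b_{\varepsilon}$ almost everywhere, matches $a$ at the endpoints (for a continuous gluing), stays above $a_{0}$, and satisfies $\Vert a_{\varepsilon}-a\Vert_{L^\infty}=\operatorname{O}(\varepsilon^2)$.

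On such an interval the relation is equivalent to $a_{\varepsilon}'=\pm\sqrt{b_{\varepsilon}^2-a_{\varepsilon}^2}/a_{\varepsilon}$, whose right–hand side is well defined as long as $a_{0}\leq a_{\varepsilon}\leq b_{\varepsilon}$ and is then bounded by the $\varepsilon$-independent constant $G:=\sqrt{(\Vert b\Vert_{\infty}+c)^2-a_{0}^2}/a_{0}$. I would let $a_{\varepsilon}$ zigzag tightly around $a$: partition the interval into $N=N(\varepsilon)\sim 1/\varepsilon$ equal pieces $[x_{i},x_{i+1}]$ of width $h\sim\varepsilon^2$, and on each piece define $a_{\varepsilon}$ as a single ``tooth'' — a solution starting at $a_{\varepsilon}(x_{i})=a(x_{i})$ that follows one branch up to a switching abscissa $s_{i}$, then the opposite branch, with $s_{i}$ chosen so that $a_{\varepsilon}(x_{i+1})=a(x_{i+1})$. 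The existence of such an $s_{i}$ follows from the intermediate value theorem applied to the continuous monotone map $s\mapsto a_{\varepsilon}(x_{i+1})$, for $h$ small enough. On the addition interval one takes the ascending branch first, so the tooth bulges above $a$ and $a_{\varepsilon}\geq a\geq a_{0}$; on the subtraction interval one takes the descending branch first. Since $|a_{\varepsilon}'|\leq G$ and $a_{\varepsilon}-a$ vanishes at $x_{i}$, each tooth deviates from $a$ by at most $(G+\Vert a'\Vert_{\infty})h=\operatorname{O}(\varepsilon^2)$; as $a_{\varepsilon}=a$ at every node no error accumulates, which yields $\Vert a_{\varepsilon}-a\Vert_{L^\infty}=\operatorname{O}(\varepsilon^2)$, and in turn $a_{\varepsilon}\geq a_{0}$ for $\varepsilon$ small on any compact subinterval where $a$ is bounded away from $a_{0}$.

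The delicate point — and where the choices of Step~1 really matter — is the solvability condition $a_{\varepsilon}\leq b_{\varepsilon}$ on the subtraction interval, where $b_{\varepsilon}=b-c$. Keeping $a_{\varepsilon}$ within $\operatorname{O}(\varepsilon^2)$ of $a$ forces $a\leq b-c$ up to $\operatorname{O}(\varepsilon^2)$, i.e.\ $c\leq b-a=a(\sqrt{1+a'^2}-1)$, a quantity that vanishes exactly where $a'=0$. I would therefore refine the choice of $(x_{0}-\varepsilon/2,x_{0}+\varepsilon/2)$ so that, besides $a>a_{0}$, one has $\operatorname{ess\,inf}(b-a)>0$ there (equivalently $|a'|$ bounded away from $0$), which holds around a Lebesgue point of $a'$ where $a>a_{0}$ and $a'\neq 0$; such a point exists unless $a$ is locally constant throughout $\{a>a_{0}\}$, a degenerate situation that must be excluded or handled separately. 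One then fixes $c$ with $0<c<\operatorname{ess\,inf}_{I}(b-a)$, which simultaneously guarantees $b_{\varepsilon}>a_{0}$ and the needed inequality $a_{\varepsilon}\leq b_{\varepsilon}$. I expect this interplay between the sign of $b_{\varepsilon}-a$ and the choice of $(x_{0},c)$ to be the main obstacle; once it is settled, the tooth–by–tooth shooting argument and the bookkeeping of the $\operatorname{O}(\varepsilon^2)$ bound are routine.
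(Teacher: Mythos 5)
Your construction is the paper's own: take $a_{\varepsilon}=a$ outside the two perturbation intervals, and on each of them build a string of order $1/\varepsilon$ small teeth of width order $\varepsilon^{2}$, each tooth obtained from the increasing and decreasing branches of $a_{\varepsilon}'=\pm\sqrt{b_{\varepsilon}^{2}-a_{\varepsilon}^{2}}/a_{\varepsilon}$ glued at a switching point supplied by the intermediate value theorem, the $\operatorname{O}(\varepsilon^{2})$ bound coming from the uniform slope bound and the fact that $a_{\varepsilon}=a$ at the nodes. Moreover, the difficulty you isolate in your last paragraph is genuine: on the subtraction interval any admissible competitor satisfies $a_{\varepsilon}\leq a_{\varepsilon}\sqrt{1+a_{\varepsilon}'^{2}}=b-c$ a.e., so $\Vert a_{\varepsilon}-a\Vert_{L^{\infty}}=\operatorname{O}(\varepsilon^{2})$ forces $b-a\geq c-\operatorname{O}(\varepsilon^{2})$ a.e.\ there. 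The paper never confronts this; it is hidden behind the phrase ``the construction of $a_{\varepsilon}$ on $[x_{0}-\varepsilon/2,x_{0}+\varepsilon/2]$ being similar'', which it is not, since on $[0,\varepsilon]$ one has $b_{\varepsilon}=b+c>b$ and the obstruction disappears.

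However, your way of closing the gap does not work. First, the Lebesgue-point argument is incorrect: a Lebesgue point $x_{0}$ of $a'$ with $a'(x_{0})\neq 0$ controls averages of $|a'|$, not its essential infimum, and $a'$ may still vanish on a set of positive measure in every neighbourhood of $x_{0}$; so the interval you need may fail to exist even when $a$ is nowhere locally constant. Second, and more seriously, your sufficient condition is not sufficient: pointwise solvability of the ODE is not the only constraint, each tooth must also close up at its right node, and this fails precisely when $|a'|$ is bounded below with a single sign, i.e.\ when $a$ is monotone on the interval --- the very situation your condition selects. Indeed, if $a'\geq m>0$ there, every solution of $y\sqrt{1+y'^{2}}=b-c$ issued from the graph of $a$ and staying $\operatorname{O}(\varepsilon^{2})$-close to it has $|y'|<a'$ at any contact point (the paper's comparison argument with the inequality reversed), hence drops strictly below $a$ and can never return; equivalently, a monotone $a$ spends its whole budget, $\int_{u}^{v}\sqrt{b^{2}-a^{2}}\,dx=\tfrac{1}{2}\vert a(v)^{2}-a(u)^{2}\vert$, so after replacing $b$ by $b-c$ the required rise is out of reach of anything $\operatorname{O}(\varepsilon^{2})$-close to $a$. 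What actually helps is fine oscillation of $a$, not a lower bound on $|a'|$. Third, the ``degenerate situation'' cannot be ``excluded or handled separately'' without a new argument: $a$ is an arbitrary hypothetical maximizer produced by the contradiction hypothesis, and, for instance, a constant $a\equiv k$ with $a_{0}<k\leq S_{0}/\ell$, or an increasing affine $a$ close to $a_{0}$, are admissible candidates for which the lemma, with $c$ independent of $\varepsilon$, is simply false. To be fair, this hole is also present, unacknowledged, in the paper's proof; but repairing it requires a genuinely different device (an $\varepsilon$-dependent $c$, or a different perturbation for monotone or plateau-like $a$), not the refined choice of $(x_{0},c)$ you propose.
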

\begin{proof}
The proof consists in exhibiting an element $a_{\varepsilon}\in \mathcal{S}_{a_{0},\ell,S_{0}}$ so that the statements of Lemma \ref{lemma:thm2} are satisfied. First, we impose $a_{\varepsilon}=a$ on $(\varepsilon,x_{0}-\varepsilon/2)\cup(x_{0}+\varepsilon/2,\ell)$. Without loss of generality, let us now explain how to define $a_{\varepsilon}$ on $[0,\varepsilon]$, the construction of $a_{\varepsilon}$ on $[x_{0}-\varepsilon/2,x_{0}+\varepsilon/2]$ being similar. The construction method is close to the one of the maximizing sequence presented in the proof of Theorem \ref{thpb1D:vol}. The idea is to impose oscillations on $a_{\varepsilon}$ to control the $L^\infty$ distance between $a$ and $a_{\varepsilon}$. 

We first explain how to create one oscillation on an interval $[\bar x,\bar x+\eta]$, where $0<\bar x<\bar x+\eta < \varepsilon$ (see Figure \ref{fig:oscil}). The function $a_{\varepsilon}$ is chosen so that
$$
a_{\varepsilon}=a_{\eta,1}\quad \textnormal{on }(\bar x,\xi)\qquad\textnormal{and}\qquad a_{\varepsilon}=a_{\eta,2}\quad\textnormal{on }(\xi,\bar x+\eta),
$$
where the function $a_{\eta,1}$ is solution of the following Cauchy problem
$$
\begin{array}{ll}
a_{\eta,1}'(x)=\frac{\sqrt{b_{\varepsilon}(x)^2-a_{\eta,1}(x)^2}}{a_{\eta,1}(x)} & x\in (\bar x, \bar x+\eta)\\
a_{\eta,1}(\bar x)=a(\bar x), & 
\end{array}
$$
the function $a_{\eta,2}$ is solution of the following Cauchy problem
$$
\begin{array}{ll}
a_{\eta,2}'(x)=-\frac{\sqrt{b_{\varepsilon}(x)^2-a_{\eta,2}(x)^2}}{a_{\eta,2}(x)} & x\in (\bar x, \bar x+\eta)\\
a_{\eta,2}(\bar x+\eta)=a(\bar x+\eta), & 
\end{array}
$$
and $\xi \in (\bar x,\bar x+\eta)$ is chosen so that $a_{\eta,1}(\xi)=a_{\eta,2}(\xi)$. Notice that the functions $a_{\eta,1}$ and $a_{\eta,2}$ 
satisfy in particular $a_{\eta,i}\sqrt{1+a_{\eta,i}'^2}=b_{\varepsilon}$ for $i\in\{1,2\}$, $a_{\eta,1}$ is increasing and $a_{\eta,2}$ is decreasing. 
Such a construction is possible provided that the graphs of $a_{\eta,1}$ and $a_{\eta,2}$ intersect at a point whose abscissa belongs to $(\bar x,\bar x+\eta)$. 
The intermediate value theorem yields that it is enough to show that
$$
a_{\eta,1}(\bar x+\eta)>a(\bar x+\eta)\qquad \textnormal{and}\qquad a_{\eta,2}(\bar x)>a(\bar x).
$$
\begin{figure}[h!]
\begin{center}
\includegraphics[width=7cm]{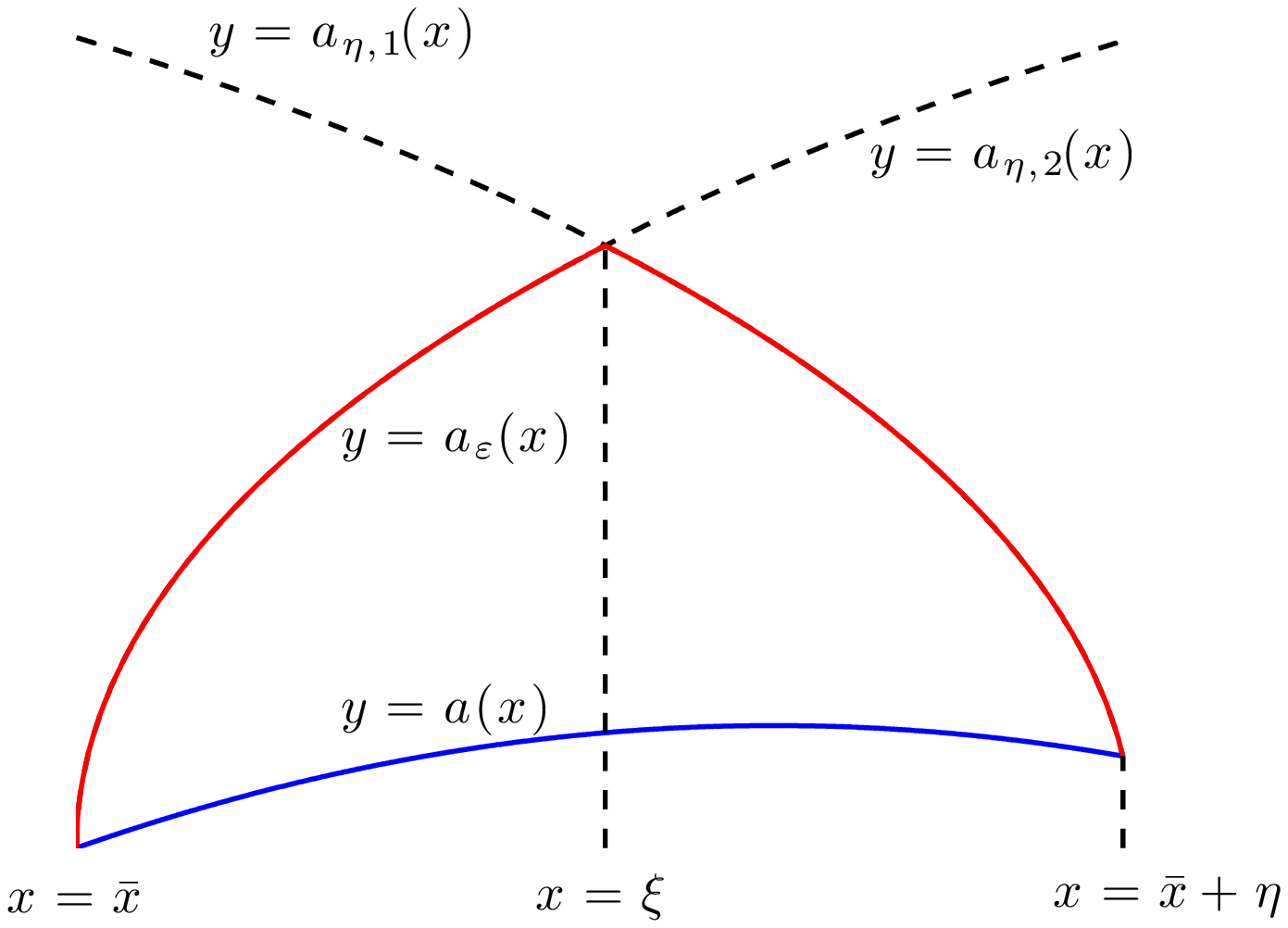}
\includegraphics[width=7cm]{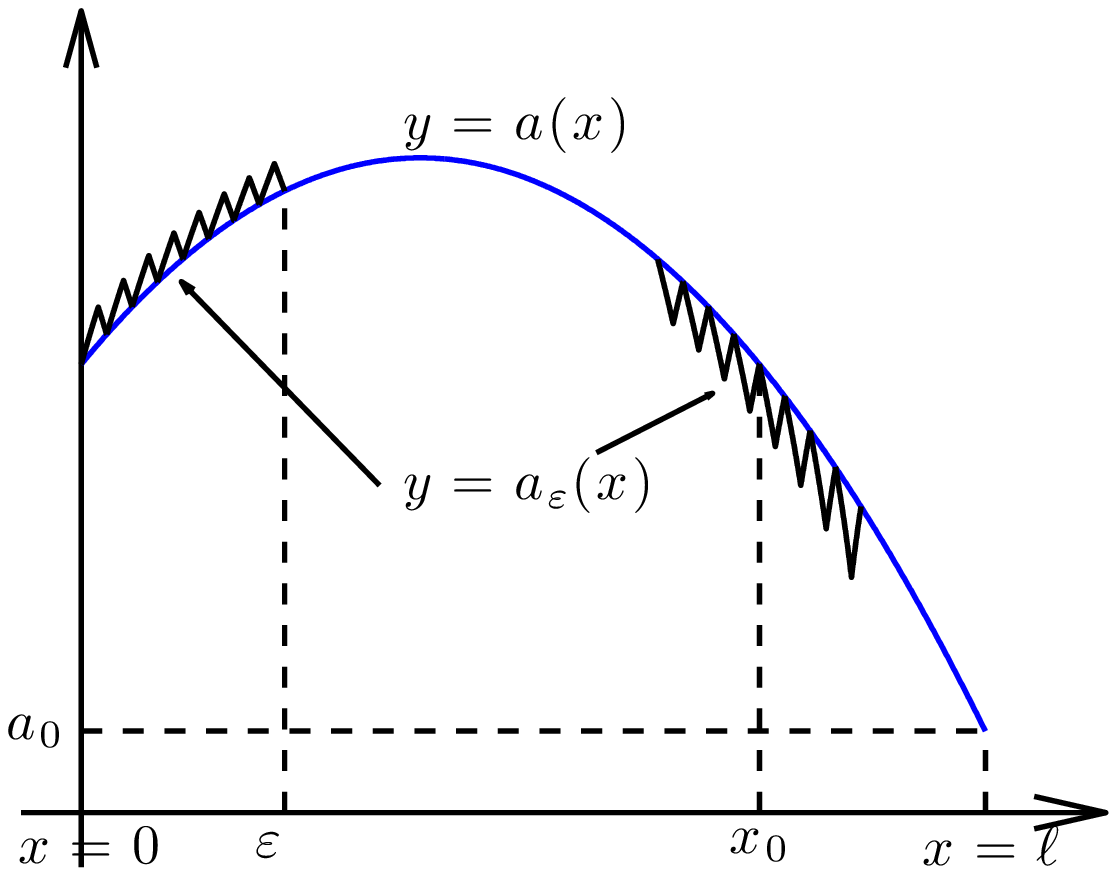}
\caption{Left: Zoom on one oscillation. Right: the perturbation $a_{\varepsilon}$}\label{fig:oscil}
\end{center}
\end{figure}

Without loss of generality, let us prove that the first assertion is true, the proof of the second one being similar. 
In fact, we will prove that $a_{\eta,1}>a$ everywhere in $(\bar x,\bar x+\eta]$. Let us argue by contradiction. 
Note that $a_{\eta,1}'(\bar x)>a'(\bar x)$ since $\bar{x}<\varepsilon$ and thus $b_\varepsilon (\bar{x})>b(\bar{x})$. 
Denote by $\alpha$ the first point of $(\bar x,\bar x+\eta)$ such that $a_{\eta,1}(\alpha)=a(\alpha)$, provided that it exists. 
Since $b_\varepsilon>b$ on $[\bar{x},\bar{x}+\eta]\subset [0,\varepsilon]$, there holds $a_{\eta,1}\sqrt{1+a_{\eta,1}'^2}>a\sqrt{1+a'^2}$ almost everywhere in any neighborhood 
of $\alpha$, thus there exists necessarily a neighborhood $\mathcal{O}_{\alpha}$ of $\alpha$ such that $|a_{\eta,1}'(x)|>|a'(x)|$ at every Lebesgue point of $a_{\eta,1}'$ and $a'$ in $\mathcal{O}_{\alpha}$. It implies the existence of $\nu>0$ such that $a_{\eta,1}(x)<a(x)$ on $(\alpha-\nu,\alpha)$, which is absurd. 

It remains now to find the number of oscillation on $[0,\varepsilon]$ and $[x_{0}-\varepsilon/2,x_{0}+\varepsilon/2]$ guaranteeing that $a_{\varepsilon}$ is as close to $a$ as desired (in the sense of the $L^\infty$ distance). Notice that, according to the previous definition of $a_{\varepsilon}$ on $[\bar x,\bar x+\eta]$, one has
\begin{eqnarray*}
a(x)^2\leq a_{\varepsilon}(x)^2& \leq & 2\int_{\bar x}^{\bar x+\eta} b_{\varepsilon}(x) + a(\bar{x})^2 \\ & \leq &  2\eta(\Vert b+c\Vert_{L^\infty(0,\ell)} +2\Vert a'\Vert_{L^\infty(0,\ell)}\Vert a\Vert_{L^\infty(0,\ell)})+a(x)^2,
\end{eqnarray*}
for every $x\in [\bar x,\bar x+\eta]$. Moreover, since $a\in W^{1,\infty}(0,\ell)$, 
$$
0\leq a_{\varepsilon}(x)-a(x)\leq \frac{(\Vert b+c\Vert_{L^\infty(0,\ell)}+\Vert a'\Vert_{L^\infty(0,\ell)}\Vert a\Vert_{L^\infty(0,\ell)})}{a_{0}}\eta
$$
for every $x\in [\bar x,\bar x+\eta]$. If suffices hence to fix for example $\eta=\varepsilon/k_{\varepsilon}$ with $k_{\varepsilon}=\left[1/\varepsilon\right]+1$, to ensure that $\Vert a_{\varepsilon}-a\Vert _{L^\infty ([\bar x,\bar x+\eta])}=\operatorname{O}(\varepsilon^2)$.

\begin{paragraph}{Conclusion} 
We consider a regular subdivision of $[0,\varepsilon]$ (resp. $[x_{0}-\varepsilon/2,x_{0}+\varepsilon/2]$) into $k_{\varepsilon}$ intervals, 
and we use the process described previously to construct the graph of $a_{\varepsilon}$ by creating one oscillation on each of these intervals. 
This way, one gets: $\Vert a_{\varepsilon}-a\Vert _{L^\infty ([0,\ell ])}=\operatorname{O}(\varepsilon^2)$.
\end{paragraph}
The lemma is then proved.
\end{proof}

We now consider a family $(a_{\varepsilon})_{\varepsilon>0}$ chosen as in the statement of Lemma \ref{lemma:thm2}. In particular, and according to the construction of $a_{\varepsilon}$ made in the proof of Lemma \ref{lemma:thm2}, we will assume without loss of generality that $a_{\varepsilon}(\ell)=a(\ell)$. 

For $\varepsilon>0$, we denote by $T_{\varepsilon}$ the temperature associated to $a_{\varepsilon}$, \textit{i.e.} the solution of \eqref{eq:T} with $a=a_{\varepsilon}$.
\end{paragraph}
\begin{paragraph}{Step 2. Asymptotic development of $T_{\varepsilon}$ at the first order}
Noticing that $\int_{0}^\ell b_{\varepsilon}(x)\, dx=\int_{0}^\ell b(x)\, dx$, $(b_{\varepsilon})_{\varepsilon>0}$ converges in the sense of measures to $b$, and $(a_{\varepsilon})_{\varepsilon>0}$ converges strongly to $a$ in $L^\infty(0,\ell)$  as $\varepsilon\searrow 0$, according to Lemma \ref{lemma:thm2}. Then, following the proof of Proposition \ref{prop:FC0}, the family $(T_{\varepsilon})_{\varepsilon>0}$ converges, up to a subsequence, weakly in $H^1(0,\ell)$ and strongly in $L^2(0,\ell)$ to $T$, the solution of \eqref{eq:T} associated to the optimal radius $a$.

Let us write an asymptotic development of $T_{\varepsilon}$ at the first order. For that purpose, introduce
$$
\widetilde{T}_{\varepsilon}=\frac{T_{\varepsilon}-T}{\varepsilon}.
$$
The function $\widetilde{T}_{\varepsilon}$ is solution of the following ordinary differential equation
\begin{equation}\label{eq:Teps}
\begin{array}{ll}
(a(x)^2\widetilde{T}_{\varepsilon}'(x))'= \beta b(x)\widetilde{T}_{\varepsilon}
+\beta \frac{R_{\varepsilon}(x)}{\varepsilon}(T_{\varepsilon}(x)-T_{\infty})-\left(\left(\frac{a_{\varepsilon}(x)^2-a(x)^2}{\varepsilon}\right)T_{\varepsilon}'(x)\right)',
& x\in (0,\ell)\\
\widetilde{T}_{\varepsilon}(0)=0 & \\
\widetilde{T}_{\varepsilon}'(\ell)=-\beta_{r}\widetilde{T}_{\varepsilon}(\ell), & 
\end{array}
\end{equation}
where $R_{\varepsilon}(x)=c(\chi_{[0,\varepsilon]}-\chi_{[x_{0}-\varepsilon/2,x_{0}+\varepsilon+2]})$. 

Let us first prove the convergence of $\widetilde{T}_{\varepsilon}$. We multiply the main equation of System \eqref{eq:Teps} by $\widetilde{T}_{\varepsilon}$ and then integrate. We get
\begin{eqnarray*}
\int_{0}^\ell \left(a(x)^2\widetilde{T}_{\varepsilon}'(x)^2+\beta b(x)\widetilde{T}_{\varepsilon}(x)^2\right)\, dx + \beta_{r}a(\ell)^2\widetilde{T}_{\varepsilon}(\ell)^2 &=& -\frac{\beta}{\varepsilon}\int_{0}^\varepsilon R_{\varepsilon}(x)\widetilde{T}_{\varepsilon}(x)(T_{\varepsilon}(x)-T_{\infty})\, dx\\
& & -\int_{0}^\ell \left(\frac{a_{\varepsilon}(x)^2-a(x)^2}{\varepsilon}\right)T_{\varepsilon}'(x) \widetilde{T}_{\varepsilon}'(x)\, dx.
\end{eqnarray*}
First, notice that, using standard Sobolev imbedding results and the weak-$H^1$ convergence of the family $(T_{\varepsilon})_{\varepsilon>0}$, there exists $C_{1}>0$ such that 
\begin{eqnarray*}
\frac{\beta}{\varepsilon}\left|\int_{0}^\varepsilon R_{\varepsilon}(x)\widetilde{T}_{\varepsilon}(x)(T_{\varepsilon}(x)-T_{\infty})\, dx\right| & = &
\frac{\beta c}{\varepsilon}\left|\int_{0}^\varepsilon \widetilde{T}_{\varepsilon}(T_{\varepsilon}-T_{\infty})\, dx-\int_{x_{0}-\varepsilon/2}^{x_{0}+\varepsilon/2} \widetilde{T}_{\varepsilon}(T_{\varepsilon}-T_{\infty})\, dx\right|\\
& \leq & 2\beta c\Vert \widetilde{T}_{\varepsilon}\Vert_{L^\infty(0,\ell)}(\Vert T_{\varepsilon}\Vert_{L^\infty(0,\ell)}+T_{\infty})\\
&\leq & C_{1}\Vert \widetilde{T}_{\varepsilon}\Vert_{H^1(0,\ell)}.
\end{eqnarray*}
Second, using at the same time Lemma \ref{lemma:10H28}, \ref{lemma:thm2} and the Cauchy-Schwarz inequality, there exists $C_{2}>0$ such that
\begin{eqnarray*}
\left|\int_{0}^\ell \left(\frac{a_{\varepsilon}(x)^2-a(x)^2}{\varepsilon}\right)T_{\varepsilon}'(x) \widetilde{T}_{\varepsilon}'(x)\, dx \right| & \leq &
2\sqrt{\frac{S_{0}^2}{\ell^2}+4S_{0}}\frac{\Vert a_{\varepsilon}-a\Vert_{L^\infty(0,\ell)}}{\varepsilon} \Vert \widetilde{T}_{\varepsilon}\Vert_{H^1(0,\ell)}\Vert T_{\varepsilon}-T_{\infty}\Vert_{H^1(0,\ell)} \\
&\leq & C_{2}\varepsilon \Vert \widetilde{T}_{\varepsilon}\Vert_{H^1(0,\ell)}.
\end{eqnarray*}
Combining the two previous estimates and using that
$$
a_{0}\min \{\beta,a_{0}\}\Vert \widetilde{T}_{\varepsilon}\Vert_{H^1(0,\ell)}^2 \leq \int_{0}^\ell \left(a(x)^2\widetilde{T}_{\varepsilon}'(x)^2+\beta b(x)\widetilde{T}_{\varepsilon}(x)^2\right)\, dx+ \beta_{r}a(\ell)^2\widetilde{T}_{\varepsilon}(\ell)^2,
$$
yields that $(\widetilde{T}_{\varepsilon})_{\varepsilon>0}$ is bounded in $H^1(0,\ell)$. Then, using a Rellich theorem, $(\widetilde{T}_{\varepsilon})_{\varepsilon>0}$ converges, up to a subsequence, to $\widetilde{T}$, weakly in $H^1(0,\ell)$ and strongly in $L^2(0,\ell)$. 

Let us now write the system whose $\widetilde{T}$ is solution. The variational formulation of \eqref{eq:Teps} writes: find $\widetilde{T}_{\varepsilon}\in H^1(0,\ell)$ satisfying $\widetilde{T}_{\varepsilon}(0)=0$ such that for every test function $\varphi\in H^1(0,\ell)$ satisfying $\varphi(0)=0$, one has
\begin{eqnarray}
\int_{0}^\ell \left(a(x)^2\widetilde{T}_{\varepsilon}'(x)\varphi'(x)+\beta b(x)\widetilde{T}_{\varepsilon}(x)\varphi(x)\right)\, dx +\beta_{r}a(\ell)^2\widetilde{T}_{\varepsilon}(\ell)\varphi(\ell)\nonumber \\
+ \beta \int_{0}^\ell  \frac{R_{\varepsilon}(x)}{\varepsilon}(T_{\varepsilon}(x)-T_{\infty})\varphi(x)\, dx -\int_{0}^\ell \left(\left(\frac{a_{\varepsilon}(x)^2-a(x)^2}{\varepsilon}\right)T_{\varepsilon}'(x)\right)' \varphi(x)\, dx=0.\label{FVTtildeps}
\end{eqnarray}
Note that
$$
-\int_{0}^\ell \left(\left(\frac{a_{\varepsilon}(x)^2-a(x)^2}{\varepsilon}\right)T_{\varepsilon}'(x)\right)' \varphi(x)\, dx= \int_{0}^\ell \left(\frac{a_{\varepsilon}(x)^2-a(x)^2}{\varepsilon}\right)T_{\varepsilon}'(x)\varphi'(x)\, dx.
$$
Then, since $(T_{\varepsilon})_{\varepsilon>0}$ is uniformly bounded in $H^1(0,\ell)$ and using Lemma \ref{lemma:thm2}, one deduces that
$$
\lim_{\varepsilon\searrow 0}\int_{0}^\ell \left(\left(\frac{a_{\varepsilon}(x)^2-a(x)^2}{\varepsilon}\right)T_{\varepsilon}'(x)\right)' \varphi(x)\, dx=0.
$$
We let $\varepsilon$ tend to zero in \eqref{FVTtildeps}, and using the Lebesgue density theorem, we get
\begin{equation}\label{FVTtilde}
\int_{0}^\ell \left(a(x)^2\widetilde{T}'(x)\varphi'(x)+\beta b(x)\widetilde{T}(x)\varphi(x)\right)\, dx-\beta c\varphi(x_{0})(T(x_{0})-T_{\infty})+\beta_{r}a(\ell)^2\widetilde{T}(\ell)\varphi(\ell)=0,
\end{equation}
for every test function $\varphi\in H^1(0,\ell)$ satisfying $\varphi(0)=0$. We refer to Remark \ref{RkFVTtilde} for the characterization of $\widetilde{T}$.
\end{paragraph}
\begin{paragraph}{Step 3. Asymptotic of the cost functional \eqref{def:F(a)} when $\varepsilon$ tends to 0 and conclusion}
We compute
\begin{eqnarray*}
\frac{F(a_{\varepsilon})-F(a)}{k\pi \beta} & = & \int_{0}^\ell \left(b_{\varepsilon}(x)(T_{\varepsilon}(x)-T_{\infty})-b(x)(T(x)-T_{\infty})\right)\, dx\\
& & +\frac{\beta_{r}}{\beta}a(\ell)^2(T_{\varepsilon}(\ell)-T(\ell))\\
& = & \int_{0}^\ell b(x)(T_{\varepsilon}(x)-T(x))\, dx + \int_{0}^\ell R_{\varepsilon}(x)(T_{\varepsilon}(x)-T_{\infty})\, dx\\
& & +\frac{\beta_{r}}{\beta}a(\ell)^2(T_{\varepsilon}(\ell)-T(\ell)).
\end{eqnarray*}
Dividing by $\varepsilon$ and letting $\varepsilon$ go to zero, one sees that
\begin{equation}\label{diffCrit}
\lim_{\varepsilon\searrow 0}\frac{F(a_{\varepsilon})-F(a)}{k\pi \beta\varepsilon}=\int_{0}^\ell b(x)\widetilde T(x)\, dx+c(T_{d}-T(x_{0})) +\frac{\beta_{r}}{\beta}a(\ell)^2\widetilde{T}(\ell).
\end{equation}
The contradiction will follow by proving that the right hand-side of the last equality is positive. For that purpose, take $\varphi=T-T_{d}$ in \eqref{FVTtilde}. We obtain
\begin{eqnarray}\label{eqFV1}
\int_{0}^\ell \left(a(x)^2\widetilde T'(x)T'(x)+\beta b(x)\widetilde T(x)T(x)\right)\, dx& = & \beta T_{d}\int_{0}^\ell b(x)\widetilde T(x)\, dx +\beta c(T(x_{0})-T_{d})(T(x_{0})-T_{\infty})\nonumber \\
 & & -\beta_{r}a(\ell)^2\widetilde{T}(\ell)(T(\ell)-T_{d}).
\end{eqnarray}
Multiply now Equation \eqref{eq:T} by $\widetilde T$ and integrate then by parts. We obtain
\begin{equation}\label{eqFV2}
\int_{0}^\ell \left(a(x)^2\widetilde T'(x)T'(x)+\beta b(x)\widetilde T(x)T(x)\right)\, dx=\beta T_{\infty}\int_{0}^\ell b(x)\widetilde T(x)\, dx-\beta_{r}a(\ell)^2\widetilde{T}(\ell)(T(\ell)-T_{\infty}).
\end{equation}
Combining \eqref{eqFV1} and \eqref{eqFV2} yields
$$
\int_{0}^\ell b(x)\widetilde T(x)\, dx= c\frac{(T_{d}-T(x_{0}))(T(x_{0})-T_{\infty})}{T_{d}-T_{\infty}}-\frac{\beta_{r}}{\beta}a(\ell)^2\widetilde{T}(\ell).
$$
According to \eqref{diffCrit}, we compute
\begin{eqnarray*}
\lim_{\varepsilon\searrow 0}\frac{F(a_{\varepsilon})-F(a)}{k\pi \beta\varepsilon} & = & c\frac{(T_{d}-T(x_{0}))(T(x_{0})-T_{\infty})}{T_{d}-T_{\infty}}+c(T_{d}-T(x_{0}))\\
& = & c\frac{(T_{d}-T_{\infty})^2-(T(x_{0})-T_{\infty})^2}{T_{d}-T_{\infty}}.
\end{eqnarray*}
According to Lemma \ref{lemma1:T}, the right hand sign is positive, and it follows that, for $\varepsilon$ small enough, $F(a_{\varepsilon})>F(a)$. This is a contradiction, and it proves that Problem \eqref{defS} has no solution.
\end{paragraph}
\begin{paragraph}{Step 4. Convergence along maximizing sequences} It remains now to prove the second claim of Theorem \ref{thpb1D:surf}. To this aim, we will use an auxiliary optimal design problem obtained from Problem \eqref{defS} by imposing a uniform upper bound on $a\sqrt{1+a'^2}$. Indeed, define for  $M>a_{0}$ the set
$$
 \mathcal{S}_{a_{0},\ell,S_{0}}^M=\left\{a\in \mathcal{S}_{a_{0},\ell,S_{0}}, \ a_0\leq a\sqrt{1+a'^2}\leq M\textrm{ a.e. in }(0,\ell)\right\}.
 $$
Notice that for a given $M>a_{0}$, the class $\mathcal{S}_{a_{0},\ell,S_{0}}^M$ is compact in $W^{1,\infty}(0,\ell)$. For $M>a_{0}$, let us introduce the auxiliary problem
\begin{equation}\label{pbM}
\sup \left\{F(a), \ a\in \mathcal{S}_{a_{0},\ell,S_{0}}^M\right\}.
\end{equation}
In the following proposition, we perform a precise analysis of this problem.
\begin{proposition}\label{prop:bang}
Let $a_{0}$, $\ell$ and $M>0$ denote three positive real numbers, with $S_{0}> \ell a_{0}$ and $M>a_0$.
Then, Problem \eqref{pbM} has a solution $\underline{a}_M$, satisfying necessarily (up to a zero Lebesgue measure subset)
\begin{equation} \label{eq:bang}
\underline{a}_M (x)\sqrt{1+\underline{a}_M'^2(x)} = \left\{ \begin{array}{rl} M,& x\in (0,x_M),\\
                                       a_0,& x\in (x_M,\ell),\\
                                      \end{array}\right.\end{equation}
with $x_M= \frac{S-a_{0}\ell}{M-a_{0}}$. 
\end{proposition}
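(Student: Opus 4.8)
The plan is to combine an existence argument based on the compactness of $\mathcal{S}_{a_{0},\ell,S_{0}}^M$ with a first-order optimality analysis carried out through two families of admissible perturbations, in the very spirit of Step~3 of the proof of Theorem~\ref{thpb1D:surf}. For existence, note that the constraint $a_{0}\leq a\sqrt{1+a'^2}\leq M$ forces $|a'|\leq \sqrt{M^2/a_{0}^2-1}$, while $a$ is bounded in $L^\infty$ by Lemma~\ref{lemma:10H28}; together with the compactness of $\mathcal{S}_{a_{0},\ell,S_{0}}^M$ already noticed above and the continuity of $F$ for the $\tau$-convergence (Proposition~\ref{prop:FC0}, recalling that $W^{1,\infty}$-convergence implies $\tau$-convergence), the supremum in \eqref{pbM} is attained at some $\underline{a}_M$. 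I write $b_{*}=\underline{a}_M\sqrt{1+\underline{a}_M'^2}$, so that $a_{0}\leq b_{*}\leq M$ a.e. and $\int_{0}^\ell b_{*}\leq S_{0}$, and I denote by $T$ the associated temperature, which is strictly decreasing by Lemma~\ref{lemma1:T}. (I work in the relevant regime $M\geq S_{0}/\ell$, so that $x_M\leq \ell$; this is the case of interest since $M\to+\infty$ in Step~4.)

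The core is a rearrangement/bang-bang argument. Suppose there exist $x_{1}<x_{2}$ with $b_{*}(x_{1})<M$ and $b_{*}(x_{2})>a_{0}$ on sets of positive measure. I then build, exactly as in Lemma~\ref{lemma:thm2}, an admissible family $(a_{\varepsilon})\subset \mathcal{S}_{a_{0},\ell,S_{0}}^M$ whose surface density is $b_{\varepsilon}=b_{*}+c(\chi_{I_{\varepsilon}(x_{1})}-\chi_{I_{\varepsilon}(x_{2})})$ for a small $c>0$ and intervals $I_{\varepsilon}$ of length $\varepsilon$, the bounds $a_{0}\leq b_{\varepsilon}\leq M$ and the mass balance $\int b_{\varepsilon}=\int b_{*}$ being preserved by construction, and the $L^\infty$-smallness $\|a_{\varepsilon}-\underline{a}_M\|_{\infty}=\operatorname{O}(\varepsilon^2)$ following from the oscillation construction. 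Repeating verbatim the computation of Step~3 with the base point $0$ replaced by $x_{1}$ yields
$$
\lim_{\varepsilon\searrow 0}\frac{F(a_{\varepsilon})-F(\underline{a}_M)}{k\pi\beta\varepsilon}=\frac{c}{T_{d}-T_{\infty}}\left((T(x_{1})-T_{\infty})^2-(T(x_{2})-T_{\infty})^2\right),
$$
which is strictly positive since $T$ is strictly decreasing and $x_{1}<x_{2}$; this contradicts the optimality of $\underline{a}_M$. Hence no such pair exists, and a short argument on Lebesgue density points turns this ``no improving exchange'' property into the two-valued profile: there is a threshold $x_{*}$ with $b_{*}=M$ a.e. on $(0,x_{*})$ and $b_{*}=a_{0}$ a.e. on $(x_{*},\ell)$.

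It remains to saturate the constraint and identify $x_{*}$. Applying the same analysis to the single-bump perturbation $b_{\varepsilon}=b_{*}+c\chi_{I_{\varepsilon}(x_{1})}$ at a point where $b_{*}<M$ (pure addition, so $\int b_{\varepsilon}\leq S_{0}$ as long as the constraint is slack) gives, after the adjoint manipulation of Step~3 (take $\varphi=T-T_{d}$),
$$
\lim_{\varepsilon\searrow 0}\frac{F(a_{\varepsilon})-F(\underline{a}_M)}{k\pi\beta\varepsilon}=c(T(x_{1})-T_{\infty})\,\frac{2T_{d}-T(x_{1})-T_{\infty}}{T_{d}-T_{\infty}}>0
$$
by Lemma~\ref{lemma1:T}. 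Thus if $\int_{0}^\ell b_{*}<S_{0}$ one could strictly increase $F$, a contradiction; hence $\int_{0}^\ell b_{*}=S_{0}$. Substituting the bang-bang profile into $Mx_{*}+a_{0}(\ell-x_{*})=S_{0}$ yields $x_{*}=(S_{0}-a_{0}\ell)/(M-a_{0})=x_M$, which is precisely \eqref{eq:bang}.

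I expect the main obstacle to be the admissibility of the perturbations: one must realise the prescribed step modification $b_{\varepsilon}$ of the surface density as $a_{\varepsilon}\sqrt{1+a_{\varepsilon}'^2}$ for a genuine $a_{\varepsilon}\geq a_{0}$ obeying $a_{0}\leq b_{\varepsilon}\leq M$ and remaining $\operatorname{O}(\varepsilon^2)$-close to $\underline{a}_M$ in $L^\infty$, which is exactly the delicate oscillation construction of Lemma~\ref{lemma:thm2}; one must check in particular that this construction is compatible with the uniform cap $b_{\varepsilon}\leq M$ (it is, since the oscillating branches realise $b_{\varepsilon}$ exactly). The second delicate point is the careful transport of the Step~3 derivative computation to an arbitrary base point $x_{1}$, together with the density-point argument converting the exchange inequality into the exact two-level profile with a single threshold.
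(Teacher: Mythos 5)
Your proposal is correct, and its core --- existence by compactness, then a mass-conserving swap perturbation $b_{\varepsilon}=b_{*}+c(\chi_{I_{\varepsilon}(x_{1})}-\chi_{I_{\varepsilon}(x_{2})})$ realized via the oscillation construction of Lemma \ref{lemma:thm2}, whose first-order effect on $F$ is $\frac{c}{T_{d}-T_{\infty}}\bigl((T(x_{1})-T_{\infty})^2-(T(x_{2})-T_{\infty})^2\bigr)>0$ by Lemma \ref{lemma1:T} --- is exactly the paper's argument for Proposition \ref{prop:bang}. Where you genuinely depart from the paper is in splitting the conclusion into two steps: (a) the swap excludes any configuration with room to add mass \emph{before} a point with removable mass, which via your density-point argument yields a two-level profile with some threshold $x_{*}$; and (b) a second, pure-addition perturbation $b_{\varepsilon}=b_{*}+c\chi_{I_{\varepsilon}(x_{1})}$ shows the surface constraint must be saturated, forcing $x_{*}=x_{M}$. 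The paper instead asserts in one stroke that the negation of \eqref{eq:bang} produces a valid swap pair $y_{0}<x_{0}$; that assertion is in fact false precisely for the slack two-level profiles your step (b) eliminates (e.g. $\underline{b}_{M}\equiv a_{0}$, or any threshold $x_{*}<x_{M}$: there the set $\{b<M\}$ lies entirely to the right of the set $\{b>a_{0}\}$, so no order-respecting swap exists). Your extra perturbation is therefore not redundant --- it closes a case the paper's proof silently skips --- and your explicit restriction to $M\geq S_{0}/\ell$ is likewise apt, since \eqref{eq:bang} is meaningless when $x_{M}>\ell$. One slip: the constant in your pure-addition derivative is wrong. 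Since the Dirac term now enters the adjoint identity with a $+$ sign (rather than the $-$ sign of the removal term in Step 3), the computation gives $\lim_{\varepsilon\searrow 0}\frac{F(a_{\varepsilon})-F(\underline{a}_M)}{k\pi\beta\varepsilon}=\frac{c(T(x_{1})-T_{\infty})^2}{T_{d}-T_{\infty}}$, not $c(T(x_{1})-T_{\infty})\frac{2T_{d}-T(x_{1})-T_{\infty}}{T_{d}-T_{\infty}}$; you carried the removal-case sign over to the addition case. Both expressions are positive, and positivity is all you use, so the argument stands.
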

\begin{proof}
Consider a maximizing sequence $(a_n)_{n\in\N}$ in $\mathcal{S}_{a_{0},\ell,S_{0}}$, with $b_n= a_n \sqrt{1+a_n'^2}$ satisfying $a_0\leq b_n\leq M$ for almost every $n\in\N$. 
Clearly $a_0 |a_n'(x)|\leq M$ for almost every $n\in\N$ and $x\in (0,\ell)$. Hence $(a_n)_{n\in\N}$ is uniformly Lipschitz-continuous and bounded. According to the Arzel\`a-Ascoli theorem,
the sequence $(a_n)_{n\in\N}$ converges, up to a subsequence, to some Lipschitz-continuous limit $\underline{a}_M$, satisfying $a_0\leq \underline{b}_M\leq M$ where $\underline{b}_{M}=\underline{a}_M\sqrt{1+\underline{a}_M'^2}$. 

On the other hand, Proposition \ref{prop:FC0} yields 
$$\lim_{n\to +\infty} F(a_n)=\widehat{F}(\underline{a}_M,\underline{b}_M)=F(\underline{a}_M),$$ since $ \underline{b}_M=\underline{a}_M\sqrt{1+\underline{a}_M'^2}$. 
Hence $\underline{a}_M$ solves Problem \eqref{pbM}. 

Assume by contradiction that $\underline{a}_M$ does not satisfy (\ref{eq:bang}). Then there exist $0<y_0<x_0<\ell$ and $\varepsilon>0$ such that the function $b_{\varepsilon}$ defined by
$$b_{\varepsilon}=\underline{b}_M+c(\chi_{[y_{0},y_{0}+\varepsilon]}-\chi_{[x_{0}-\varepsilon,x_{0}]}),$$
satisfies to $a_0\leq b_\varepsilon \leq M$ almost everywhere in $[0,\ell]$. 
The same arguments as in the proof of Theorem \ref{thpb1D:surf} yield that one can construct a family $(a_\varepsilon)_{\varepsilon >0}$ in $\mathcal{S}_{a_{0},\ell,S_{0}}$ such that 
$b_\varepsilon = a_\varepsilon \sqrt{1+ a_\varepsilon'^2}$ for all $\varepsilon>0$ and $\|\underline{a}_M - a_\varepsilon\|_{L^\infty (0,\ell)}=\operatorname{O}(\varepsilon^2)$. Moreover, one computes
$$
\lim_{\varepsilon\searrow 0}\frac{F(a_{\varepsilon})-F(\underline{a}_M)}{k\pi \beta \varepsilon} = c\frac{(T_M(y_0)-T_{\infty})^2-(T_M(x_{0})-T_{\infty})^2}{T_{d}-T_{\infty}}
$$
where $T_M$ is the solution of \eqref{eq:T} associated with $\underline{a}_M$, and is decreasing according to Lemma \ref{lemma1:T}. Thus the right hand-side is positive and 
$F(a_\varepsilon)>F(\underline{a}_M)$ provided that $\varepsilon$ be small enough. It yields a contradiction since $a_\varepsilon$ belongs to the class of admissible functions for 
the current maximization problem. It follows that necessarily, the function $\underline{a}_M$ satisfies \eqref{eq:bang}, whence the result. 
\end{proof}

Next lemma highlights how solutions of Problem \eqref{pbM} can be used to exhibit a maximizing sequence for Problem \eqref{defS}.
\begin{lemma}\label{lemma:gammaCV}
The family $(\underline{a}_M)_{M>a_0}$ maximizes $F$.
\end{lemma}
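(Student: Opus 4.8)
The plan is to show that $(\underline{a}_M)_{M>a_0}$ is a maximizing family by exploiting the fact that the truncated constraint sets $\mathcal{S}_{a_0,\ell,S_0}^M$ form a nondecreasing exhaustion of $\mathcal{S}_{a_0,\ell,S_0}$ as $M\to+\infty$, and then to identify the common limit of $F(\underline{a}_M)$ with the value announced in Theorem~\ref{thpb1D:surf} by passing to the $\tau$-limit. Accordingly, set $V^\star=\sup_{a\in\mathcal{S}_{a_0,\ell,S_0}}F(a)$.

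First, I would prove maximality through an elementary nested-exhaustion argument. Since $\mathcal{S}_{a_0,\ell,S_0}^M\subset\mathcal{S}_{a_0,\ell,S_0}$, each $\underline{a}_M$ is admissible for Problem~\eqref{defS}, so $F(\underline{a}_M)\leq V^\star$ for every $M>a_0$, whence $\limsup_{M\to+\infty}F(\underline{a}_M)\leq V^\star$. For the reverse inequality, I would fix an arbitrary $a\in\mathcal{S}_{a_0,\ell,S_0}$ and note that, since $a\in W^{1,\infty}(0,\ell)$, the function $b=a\sqrt{1+a'^2}$ belongs to $L^\infty(0,\ell)$ and satisfies $b\geq a\geq a_0$ almost everywhere. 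Hence $a\in\mathcal{S}_{a_0,\ell,S_0}^M$ for every $M$ large enough, and the optimality of $\underline{a}_M$ for Problem~\eqref{pbM} gives $F(a)\leq F(\underline{a}_M)$. Letting $M\to+\infty$ yields $\liminf_{M\to+\infty}F(\underline{a}_M)\geq F(a)$, and taking the supremum over $a$ gives $\liminf_{M\to+\infty}F(\underline{a}_M)\geq V^\star$. Combining the two bounds shows $F(\underline{a}_M)\to V^\star$, i.e. $(\underline{a}_M)_{M>a_0}$ maximizes $F$.

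It then remains to compute this limit explicitly via the $\tau$-convergence. By Proposition~\ref{prop:bang}, $b_M:=\underline{a}_M\sqrt{1+\underline{a}_M'^2}$ equals $M$ on $(0,x_M)$ and $a_0$ on $(x_M,\ell)$, with $x_M=(S-a_0\ell)/(M-a_0)\to 0$. On $(x_M,\ell)$ the chain of inequalities $a_0=\underline{a}_M\sqrt{1+\underline{a}_M'^2}\geq\underline{a}_M\geq a_0$ forces $\underline{a}_M\equiv a_0$ there, so for every compact $K\subset(0,\ell]$ one has $\underline{a}_M\equiv a_0$ on $K$ as soon as $M$ is large, giving the local uniform convergence $\underline{a}_M\to a_0$ on $(0,\ell]$. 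For the measure part, for any $\varphi\in C^0([0,\ell])$ I would write $\int_0^\ell b_M\varphi = Mx_M\cdot\frac{1}{x_M}\int_0^{x_M}\varphi + a_0\int_{x_M}^\ell\varphi$; since $Mx_M\to S-a_0\ell$ and $\frac{1}{x_M}\int_0^{x_M}\varphi\to\varphi(0)$, this converges to $(S-a_0\ell)\varphi(0)+a_0\int_0^\ell\varphi$. Thus $(\underline{a}_M,b_M)$ $\tau$-converges to $(a_0,a_0+(S-a_0\ell)\delta_0)$, and Proposition~\ref{prop:FC0} gives $F(\underline{a}_M)\to\widehat F(a_0,a_0+(S-a_0\ell)\delta_0)$, a quantity already computed in Section~\ref{sec:optvol} to equal $k\pi\beta(T_d-T_\infty)\big(\tfrac{a_0^{3/2}\gamma}{\sqrt\beta}+(S-a_0\ell)\big)$. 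This identifies $V^\star$ with the value announced in Theorem~\ref{thpb1D:surf}.

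The step requiring the most care is the $\tau$-convergence analysis: one must check that the bang-bang profile of Proposition~\ref{prop:bang} produces the Dirac mass $(S-a_0\ell)\delta_0$ with exactly the right weight (this hinges on the explicit $x_M$ and the limit $Mx_M\to S-a_0\ell$), and that $\underline{a}_M$ itself collapses to the constant $a_0$ on every compact subset of $(0,\ell]$ — the latter being a consequence of the saturation $b_M=a_0\Rightarrow\underline{a}_M=a_0$ rather than of any a priori compactness. Once this is in place, the maximality itself comes from the nested-exhaustion argument alone, while the $\tau$-limit only serves to pin a closed-form value on $V^\star$.
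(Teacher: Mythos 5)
Your proof is correct and follows essentially the same route as the paper: the maximality of $(\underline{a}_M)_{M>a_0}$ comes from the fact that the truncated classes $\mathcal{S}_{a_0,\ell,S_0}^M$ form an increasing exhaustion of $\mathcal{S}_{a_0,\ell,S_0}$ (your $\liminf$/$\limsup$ argument, using that any $a\in W^{1,\infty}$ satisfies $a_0\leq a\sqrt{1+a'^2}\leq M$ for $M$ large, is just a spelled-out version of the paper's identity $\sup_{a\in\mathcal{S}_{a_0,\ell,S_0}}F(a)=\lim_{M\to+\infty}F(\underline{a}_M)$). Your additional $\tau$-convergence computation pinning down the explicit limit value also matches the paper, which carries out that same step immediately after the lemma as part of concluding the proof of Theorem \ref{thpb1D:surf} rather than inside the lemma's proof.
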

\begin{proof}
Notice that constructed as well, the sequence of sets $(\mathcal{S}_{a_{0},\ell,S_{0}}^M)_{M>a_{0}}$ is increasing for the inclusion, and there holds
$$
\mathcal{S}_{a_{0},\ell,S_{0}}=\bigcup_{M>a_{0}}  \mathcal{S}_{a_{0},\ell,S_{0}}^M.
$$
It thus follows that
$$
\sup_{a\in \mathcal{S}_{a_{0},\ell,S_{0}}}F(a)=\sup_{M>a_{0}}\sup_{a\in \mathcal{S}^M_{a_{0},\ell,S_{0}}}F(a)=\lim_{M\to +\infty}\max_{a\in \mathcal{S}_{a_{0},\ell,S_{0}}^M}F(a)=\lim_{M\to +\infty}F(\underline{a}_{M}),
$$
whence the convergence of the maxima. As a result, $(\underline{a}_{M})_{M>a_{0}}$ is a maximizing family.
\end{proof}
\end{paragraph}

According to this result, consider such a family $(\underline{a}_M)_{M>a_0}$. Notice that the families $(\underline{a}_M)_{M>a_{0}}$ and $(\underline{b}_M)_{M>a_{0}}$ converge respectively uniformly in $(0,\ell]$ to $a_0$ and 
for the measures topology to a Dirac mass at $x=0$, as $M\to +\infty$. Indeed, the convergence of $(\underline{b}_M)_{M>a_{0}}$ is clear and one has 
$\underline{b}_M (x) = a_0 = \underline{a}_{M}(x)\sqrt{1+\underline{a}_M'(x)^2} \geq a_0 $ for all $x\in [x_M,\ell]$, implying the $L^\infty$-convergence of $\underline{a}_M(\cdot)$ in $(0,\ell]$ to $a_{0}$, since $x_M\to 0$ as $M\to +\infty$. 
Proposition \ref{prop:FC0} thus yields that $(F(\underline{a}_M) )_{M>a_{0}}$ converges to $\widehat{F} \big(a_0, (S-a_0\ell)\delta_0+a_0\big)$ as $M\to +\infty$. In other words, using the same computations as those in the proof of Theorem \ref{thpb1D:vol}, one gets 
$$
\lim_{M\to +\infty} F(\underline{a}_M)= 2h\pi (T_{d}-T_{\infty})\left(\frac{a_{0}^{3/2}\gamma}{\sqrt{\beta}}+S_{0}-a_{0}\ell\right).
$$
Moreover, each sequence $(a_n)_{n\in\N}$ fulfilling the conditions of Theorem \ref{thpb1D:surf} admits the same limit by Proposition \ref{prop:FC0}. 

\begin{remark}\label{RkFVTtilde}
System \eqref{FVTtilde} must be understood in the following sense: $\widetilde T$ solves the system
$$ \left\{
\begin{array}{ll}
(a(x)^2\widetilde T'(x))'=\beta b(x)\widetilde T(x) & x\in (0,x_{0})\\
(a(x)^2\widetilde T'(x))'=\beta b(x)\widetilde T(x) & x\in (x_{0},\ell)\\
\widetilde T(0)=0 & \\
\sigma (a(x_{0})^2\widetilde{T}'(x_{0}))=\beta c(T(x_{0})-T_{\infty}) & \\
\widetilde{T}'(\ell)=-\beta_{r}\widetilde{T}(\ell), & 
\end{array}\right.
$$
where $\sigma (a(x_{0})^2\widetilde{T}'(x_{0}))=a(x_{0}^+)^2\widetilde{T}'(x_{0}^+)-a(x_{0}^-)^2\widetilde{T}'(x_{0}^-)$ denotes the discontinuity jump of the function $a^2\widetilde{T}'$ at $x=x_{0}$.
\end{remark}
\section{Comments and conclusion}\label{sec:ccl}
\subsection{Extension of Theorems \ref{thpb1D:vol} and \ref{thpb1D:surf} to a more general setting}\label{sec:gal_model}
In this section we provide some generalizations of the results stated in Theorems \ref{thpb1D:vol} and \ref{thpb1D:surf} to a more general model. Indeed, the convection coefficient $h$ strongly depends on the operating conditions and geometries driving the fluid flow around the fin. Consequently, $h$ is now assumed to be a function of $x$, especially because the lower part of the fin lies in the boundary layer of the fluid characterized by lower velocities.
Therefore, the temperature $T$ along the fin is now assumed to solve the following ordinary differential equation,
\begin{equation}\label{eq:Tbis2}
\begin{array}{ll}
(a^2(x)T'(x))'= \beta(x) a(x)\sqrt{1+a'(x)^2}(T(x)-T_{\infty}) & x\in (0,\ell)\\
T(0)=T_{d} & \\
T'(\ell) = -\beta_r(T(\ell)-T_{\infty}), & 
\end{array}
\end{equation}
where $T_{d}$, $T_{\infty}$ and $\beta_{r}$ are chosen as in Section \ref{sec:model1D}, and $\beta$ denotes a nonnegative continuous function satisfying
\begin{equation}\label{assump:beta0}
\beta(x)\geq \beta_{0}>0
\end{equation}
for every $x\in [0,\ell]$ so that the fin surface cannot be insulated. Notice that, in this case, the statement of Lemma \ref{lemma1:T} still holds for the solution $T(\cdot)$ of \eqref{eq:Tbis2}. 

We investigate here the optimal design problems, generalizing those introduced in Section \ref{sec:odp1}.

\begin{quote}
\noindent{\bf Generalized optimal design problem with volume or lateral surface constraint.}
\textit{Let $a_{0}$ and $\ell$ denote two positive real numbers. Fix $V_{0}>\ell a_{0}^2$ and $S_{0}> \ell a_{0}$. We investigate the problem of maximizing the functional $a\mapsto \tilde F(a)$ with $\tilde F(a)=-ka(0)T'(0)$, where $T$ denotes the unique solution of \eqref{eq:Tbis2}, either over the set $\mathcal{V}_{a_{0},\ell,V_{0}}$ or $\mathcal{S}_{a_{0},\ell,S_{0}}$, respectively defined by \eqref{defV} and \eqref{defS}.
}
\end{quote}
We obtain the following result.

\begin{theorem}\label{thpb1D:surfbis}
Let $a_{0}$, $\beta_{0}$ and $\ell$ denote three positive real numbers, and let $V_{0}>\ell a_{0}^2$ and $S_{0}> \ell a_{0}$. 
\begin{enumerate}
\item One has
$$
\sup_{a\in \mathcal{V}_{a_{0},\ell,V_{0}}}\tilde F(a)=+\infty.
$$
\item Let us assume that $\beta\in C^0([0,\ell])$ is nonconstant, satisfies \eqref{assump:beta0} and 
\begin{equation}\label{assump:beta}
\max_{x\in [0,\ell]}\beta(x)=\beta(0).
\end{equation}
Thus, the problem of maximizing $\tilde F$ over $\mathcal{S}_{a_{0},\ell,S_{0}}$ has no solution and
$$
\sup_{a\in \mathcal{S}_{a_{0},\ell,S_{0}}}\tilde F(a)=k\pi a_{0}\int_{0}^\ell \beta(x)(\tilde T-T_{\infty})\, dx+k\pi (S-a_{0}\ell)\beta(0)(T_{d}-T_{\infty})+k\beta_{r}a_{0}^2(\tilde T(\ell)-T_{\infty}),
$$
where $\tilde T$ is the unique solution of System \eqref{eq:Tbis2} with $a(\cdot)=a_{0}$.
Moreover, every sequence $(a_{n})_{n\in \N}$ of elements of $\mathcal{S}_{a_{0},\ell,S_{0}}$ defined as in the statement of Theorem \ref{thpb1D:surf} is a maximizing sequence for this problem.
\end{enumerate}
\end{theorem}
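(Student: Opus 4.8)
The plan is to follow closely the proofs of Theorems~\ref{thpb1D:vol} and~\ref{thpb1D:surf}, the only substantial new ingredient being the $x$-dependence of $\beta$, which weights the heat flux by the local value $\beta(x)$. The first observation is that the continuity result of Proposition~\ref{prop:FC0} carries over to the model \eqref{eq:Tbis2} with no change in the argument: since $\beta$ is continuous and bounded below by $\beta_0$, the a priori $H^1$-estimate and the passage to the limit in the variational formulation are identical, the only modification being that the limit functional now reads
\[
\widehat F(a,b)=k\pi \langle b,\beta(T-T_{\infty})\rangle_{\mathcal{M}(0,\ell),C^0([0,\ell])}+k\pi \beta_{r}a(\ell)^2(T(\ell)-T_{\infty}),
\]
so that the Dirac part of $b$ at $x=0$ is weighted by $\beta(0)$.

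For the first assertion I would reuse verbatim the maximizing sequence $(a_{S,m})$ built in Section~\ref{sec:optvol}. By the generalized continuity result, $\widetilde F(a_{S,m})$ tends, as $m\to+\infty$, to $\widehat F(a_0,a_0+(S-a_0\ell)\delta_0)$, whose Dirac contribution equals $k\pi(S-a_0\ell)\beta(0)(T_d-T_\infty)$. Since $\beta(0)>0$, this limit grows linearly in $S$, whereas $\operatorname{vol}(a_{S,m})\to \ell a_0^2<V_0$. A diagonal extraction (choosing $S=n$ and $m$ large enough) then produces a sequence in $\mathcal{V}_{a_0,\ell,V_0}$ along which $\widetilde F\to+\infty$.

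The core of the second assertion is the nonexistence part, for which I would transpose Steps 1--3 of the proof of Theorem~\ref{thpb1D:surf}. Assuming a maximizer $a$ exists, I form the same perturbation $b_\varepsilon=b+c(\chi_{[0,\varepsilon]}-\chi_{[x_0-\varepsilon/2,x_0+\varepsilon/2]})$ on an interval where $a>a_0$, reconstruct $a_\varepsilon$ via Lemma~\ref{lemma:thm2}, and compute the first-order expansion of $\widetilde F$. Carrying the factor $\beta(x)$ through the adjoint computation (testing $\varphi=T-T_d$ in the limiting equation for $\widetilde T=\lim_{\varepsilon\searrow0}(T_\varepsilon-T)/\varepsilon$ and comparing with \eqref{eq:Tbis2} tested against $\widetilde T$), the boundary contributions at $\ell$ cancel exactly as in the constant case, and one is left with
\[
\lim_{\varepsilon\searrow 0}\frac{\widetilde F(a_\varepsilon)-\widetilde F(a)}{k\pi\varepsilon}=\frac{c}{T_d-T_\infty}\Big(\beta(0)(T_d-T_\infty)^2-\beta(x_0)(T(x_0)-T_\infty)^2\Big).
\]
This is precisely where hypotheses \eqref{assump:beta0}--\eqref{assump:beta} enter: since $\beta(0)=\max_{[0,\ell]}\beta$ and $T$ is strictly decreasing by Lemma~\ref{lemma1:T} (which still applies to \eqref{eq:Tbis2}), the map $x\mapsto\beta(x)(T(x)-T_\infty)^2$ attains its global maximum only at $x=0$; hence the right-hand side is positive and $\widetilde F(a_\varepsilon)>\widetilde F(a)$, a contradiction.

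Finally, for the value of the supremum and the maximizing sequences I would run Step 4: introduce the truncated classes $\mathcal{S}_{a_0,\ell,S_0}^M$, obtain a solution $\underline a_M$ by Arzel\`a--Ascoli, and establish an analogue of Proposition~\ref{prop:bang} by the same transfer argument. \emph{The main obstacle is here}: when $\beta$ is nonconstant the marginal value $\psi_M(x):=\beta(x)(T_M(x)-T_\infty)^2$ is no longer monotone in $x$, so $\underline b_M$ need not have the clean profile \eqref{eq:bang} on an interval $(0,x_M)$; the transfer argument only yields that $\underline b_M$ is bang-bang, equal to $M$ on a super-level set $\{\psi_M\ge\lambda_M\}$ and to $a_0$ elsewhere. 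The decisive point is that hypothesis \eqref{assump:beta} still forces $\psi_M$ to attain its maximum uniquely at $x=0$ for every $M$ (because $\beta\le\beta(0)$ and $T_M$ is strictly decreasing). Combined with the fact that $|\{\underline b_M=M\}|=(S_0-a_0\ell)/(M-a_0)\to0$ and with the uniform convergence $\psi_M\to\beta(\widetilde T-T_\infty)^2$ (where $\widetilde T$ solves \eqref{eq:Tbis2} with $a\equiv a_0$), this shows that the excess-surface set collapses to $\{0\}$, whence $\underline b_M\to a_0+(S_0-a_0\ell)\delta_0$ in the sense of measures and $\underline a_M\to a_0$ locally uniformly on $(0,\ell]$. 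An argument identical to Lemma~\ref{lemma:gammaCV} shows that $(\underline a_M)_{M>a_0}$ is maximizing, and the generalized Proposition~\ref{prop:FC0} then delivers both the announced value of the supremum and the fact that every sequence $\tau$-converging to $(a_0,a_0+(S-a_0\ell)\delta_0)$ is maximizing.
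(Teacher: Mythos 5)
Your generalized continuity statement, your treatment of assertion 1, and your nonexistence argument for assertion 2 coincide with the paper's proof (same oscillating sequence, same perturbation $b_\varepsilon$, same adjoint computation ending with $c\bigl[\beta(0)(T_d-T_\infty)^2-\beta(x_0)(T(x_0)-T_\infty)^2\bigr]/(T_d-T_\infty)$), so there is nothing to flag there. Where you genuinely depart from the paper is Step 4: the paper claims that the solution $\underline{a}_M$ of the truncated problem \eqref{pbMbis} still satisfies the interval profile \eqref{eq:bang}, choosing the transfer points so that $\beta|_{[0,\varepsilon]}>\beta|_{[x_0-\varepsilon/2,x_0+\varepsilon/2]}$, whereas you claim only a bathtub (super-level-set) structure followed by a collapse argument. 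Your skepticism is well founded: if $\beta$ dips near some interior point and climbs back close to $\beta(0)$ further along, moving surface from the dip to the recovery region raises $\tilde F$ at first order, so \eqref{eq:bang} is not optimal for finite $M$; correspondingly, the paper's removal interval around $x_0$, selected by looking at $\beta$ alone, need not be a place where $\underline{b}_M>a_0$, so its perturbation need not be admissible. On this point your weaker structural statement is the correct one.

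The gap is in the collapse step, which is the crux and which you assert rather than prove — and, as written, circularly. You invoke the uniform convergence $\psi_M\to\beta(\tilde T-T_\infty)^2$ with $\tilde T$ the temperature for $a\equiv a_0$; but knowing that the limit of $T_M$ is $\tilde T$, rather than the temperature associated with $a_0\,dx+(S_0-a_0\ell)\delta_{x^*}$ for some interior $x^*$, is precisely the concentration-at-zero statement you want. Compactness only gives, along a subsequence, $T_M\to T^*$ uniformly with $T^*$ nonincreasing and $T^*(0)=T_d$, hence $\psi_M\to\psi^*:=\beta\,(T^*-T_\infty)^2$. The collapse then needs the argmax set of $\psi^*$ to be exactly $\{0\}$; uniqueness of the maximum of each $\psi_M$ (which you correctly note) is not enough, since the gap $\psi_M(0)-\sup_{[\delta,\ell]}\psi_M$ could vanish in the limit. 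And since \eqref{assump:beta} allows $\beta\equiv\beta(0)$ on a whole interval ($\beta$ is only nonconstant with maximum at $0$), strictness cannot come from $\beta$: you must prove $T^*(x)<T_d$ for every $x>0$, while a priori a uniform limit of strictly decreasing functions could be flat equal to $T_d$ near $0$. This missing lemma is provable: integrating the state equation twice and using $\underline{b}_M\geq a_0$, $\beta\geq\beta_0$ and the bound $\Vert \underline{a}_M\Vert_\infty^2\leq S_0^2/\ell^2+4S_0$ of Lemma \ref{lemma:10H28} gives
$$
T_d-T_M(x)\;\geq\;\frac{\beta_0\,a_0}{S_0^2/\ell^2+4S_0}\int_0^x\!\!\int_s^\ell\bigl(T_M(t)-T_\infty\bigr)\,dt\,ds,
$$
which rules out $T^*\equiv T_d$ on any interval $[0,x^\dagger]$, $x^\dagger>0$. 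You also use silently that the levels $\lambda_M$ approach $\max\psi_M$, equivalently that $\{\underline{b}_M>a_0\}$ has small measure located in nearly maximal level sets; this follows from the equicontinuity of $(\psi_M)$ (via the uniform $H^1$ bound on $T_M$) and the surface budget — otherwise $\underline{b}_M=M$ on a fixed neighborhood $[0,r]$ and $\int_0^\ell\underline{b}_M\geq Mr\to+\infty$ — but it must be stated. With these two ingredients your scheme closes, and it in fact repairs the loose point in the paper's own Step 4; without them, the decisive assertion ``the excess-surface set collapses to $\{0\}$'' is unsupported.
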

\begin{remark}
The technique used in the proofs of Theorems \ref{thpb1D:surf} and \ref{thpb1D:surfbis} fails and cannot be easily adapted when the function $\beta$ does not satisfy \eqref{assump:beta} anymore. The issue of solving the same problem in this case is discussed and commented in Section \ref{sec:num}.
\end{remark}
\begin{proof}
We do not give all details since the proof is very similar to the ones of Theorems \ref{thpb1D:vol} and \ref{thpb1D:surf}. We only underline the slight differences in every step.
\begin{enumerate}
\item Consider the sequence $(a_{S,m})_{m\in\N^*}$ introduced in the proof of Theorem \ref{thpb1D:vol}. The same kind of computations show that
\begin{eqnarray*}
\lim_{m\to +\infty}\frac{\tilde F(a_{S,m})}{k\pi} & = & \beta(0)(T_{d}-T_{\infty})(S-a_{0}\ell)+ a_{0}\int_{0}^\ell (T(x)-T_{\infty})\, dx+\beta_{r}a_{0}^2(T(\ell)-T_{\infty})\\
& \geq & \beta(0)(T_{d}-T_{\infty})(S-a_{0}\ell),
\end{eqnarray*}
where $T$ is the unique solution of System \eqref{eq:Tbis2} with $a(\cdot)=a_{0}$. Then, we conclude similarly to the proof of Theorem \ref{thpb1D:vol}.
\item Accordingly to the three first steps of the proof of Theorem \ref{thpb1D:surf}, we first show that the aforementioned problem has no solution. We argue by contradiction, by assuming the existence of a solution $a$, introducing $b=a\sqrt{1+a'^2}$ and the perturbation 
$$
b_{\varepsilon}=b+c(\chi_{[0,\varepsilon]}-\chi_{[x_{0}-\varepsilon/2,x_{0}+\varepsilon/2]}),
$$
where $c$ is a positive constant, chosen small enough to guarantee that $b_{\varepsilon}>a_{0}$ almost everywhere in $(0,\ell)$, and $x_{0}\in (0,\ell)$ is such that $a>a_{0}$ on $(x_{0}-\varepsilon/2,x_{0}+\varepsilon/2)$. The same computations as those led in Steps 2 and 3 yield
\begin{eqnarray*}
\lim_{\varepsilon\searrow 0}\frac{\tilde F(a_{\varepsilon})-\tilde F(a)}{k\pi \varepsilon}& =& \int_{0}^\ell \beta (x)b(x)\widetilde T(x)\, dx+c\left(\beta(0)(T_{d}-T_{\infty})-\beta(x_{0})(T(x_{0})-T_{\infty})\right) \\
& & +\beta_{r} a(\ell)^2\widetilde{T}(\ell),
\end{eqnarray*}
where $\widetilde T\in H^1(0,\ell)$ is the unique function such that $\widetilde T(0)=0$ and 
\begin{eqnarray}
 \int_{0}^\ell \left(a(x)^2\widetilde{T}'(x)\varphi'(x)+\beta(x) b(x)\widetilde{T}(x)\varphi(x)\right)\, dx-\beta(x_{0}) c\varphi(x_{0})(T(x_{0})-T_{\infty})\nonumber \\
 +\beta_{r}a(\ell)^2\widetilde{T}(\ell)\varphi(\ell)=0,\label{FVTtildebis}
\end{eqnarray}
for every test function $\varphi\in H^1(0,\ell)$ satisfying $\varphi(0)=0$. An adequate choice  of test function allows to reduce the last expression to
$$
\lim_{\varepsilon\searrow 0}\frac{\tilde F(a_{\varepsilon})-\tilde F(a)}{k\pi \varepsilon} = c\frac{\beta(0)(T_{d}-T_{\infty})^2-\beta(x_{0})(T(x_{0})-T_{\infty})^2}{T_{d}-T_{\infty}}.
$$
Since $T-T_{\infty}$ is positive decreasing, the right-hand side is positive, which yields a contradiction.

It remains now to exhibit a maximizing sequence. For that purpose, we follow the lines of Step 4, by investigating the problem 
\begin{equation}\label{pbMbis}
\sup \left\{\tilde F(a), \ a\in \mathcal{S}_{a_{0},\ell,S_{0}},\  a_0\leq a\sqrt{1+a'^2}\leq M\textrm{ a.e. in }(0,\ell)\right\}
\end{equation}
and showing that a solution $\underline{a}_{M}$ satisfies necessarily \eqref{eq:bang}. Define the function $\underline{b}_{M}$ by $\underline{b}_{M}=\underline{a}_{M}\sqrt{1+\underline{a}_{M}'^2}$. As in Proposition \ref{prop:bang}, we argue by contradiction. Thus, it suffices to consider a particular perturbation $b_{\varepsilon}$ of $\underline{b}_{M}$ of the form
$$b_{\varepsilon}=\underline{b}_M+c(\chi_{[y_{0},y_{0}+\varepsilon]}-\chi_{[x_{0}-\varepsilon,x_{0}]}),
$$
with $0<y_{0}<x_{0}<\ell$. 
The proof is identical, but we need to adapt slightly the proof of Proposition \ref{prop:bang}, and in particular, to make the choices of $\varepsilon$, $x_{0}$ and $y_{0}$ precise, in order to guarantee that $\lim_{\varepsilon\searrow 0}\frac{\tilde F(a_{\varepsilon})-\tilde F(\underline{a}_M)}{k\pi \varepsilon}$ be positive, provided that $\beta$ satisfies \eqref{assump:beta}. According to this assumption, we consider $\varepsilon>0$ and $x_{0}\in (0,\ell)$ such that
$$
\left.\beta \right|_{[0,\varepsilon]}> \left.\beta \right|_{[x_{0}-\varepsilon/2,x_{0}+\varepsilon/2]}.
$$
Recall that $x_{M}=\frac{S-a_{0}\ell}{M-a_{0}}$ and notice that, for $M$ large enough, $x_{M}\in (0,\varepsilon)$. We fix $M_{0}>a_{0}$ and $y_{0}\in (0,x_{M})$ so that $\left. b\right|_{[y_{0},y_{0}+\varepsilon]}<M$ for $M>M_{0}$. Such conditions ensure that 
$$
\lim_{\varepsilon\searrow 0}\frac{\tilde F(a_{\varepsilon})-\tilde F(\underline{a}_{M})}{k\pi \varepsilon} = c\frac{\beta(y_0)(T(y_{0})-T_{\infty})^2-\beta(x_{0})(T(x_{0})-T_{\infty})^2}{T_{d}-T_{\infty}}>0,
$$
where $T$ denotes the solution of System \eqref{eq:Tbis2} with $a(\cdot)=\underline{a}_{M}$. Hence, the conclusion of Proposition \ref{prop:bang} remains true in this case.
It suffices then to mimic the rest of the proof, by considering the sequence $(\underline{b}_{M})_{M>M_{0}}$ and letting $M$ go to $+\infty$. The end of the proof is similar to the one of Theorem \ref{thpb1D:surf} and can thus be easily adapted.
\end{enumerate}
\end{proof}

\subsection{Numerical investigations}\label{sec:num}
In Proposition \ref{prop:bang}, we established an existence result for the auxiliary problem \eqref{pbM}, close to the initial one, where a pointwise upper bound constraint on the term $a\sqrt{1+a'^2}$ was added. Recall that this problem writes for a given $M>a_{0}$,
$$
\sup \left\{F(a), \ a\in \mathcal{S}_{a_{0},\ell,S_{0}},\  a_0\leq a\sqrt{1+a'^2}\leq M\textrm{ a.e. in }(0,\ell)\right\}.
$$
According to Lemma \ref{lemma:gammaCV}, solving this problem and letting $M$ go to $+\infty$ yields an approximation of the optimal value for Problem \eqref{defS}. Furthermore, this problem is also used to construct maximizing sequences in the proof of Theorem \ref{thpb1D:surf}. This is why we decided to focus the numerical investigations on the optimal design problem \eqref{pbM}.
On the following figures, its solutions are plotted for several values of the pointwise constraint $M$  in order to highlight its important role in the construction of maximizing sequences.

\begin{remark}[Brief discussion on the numerical simulations]
The simulations were obtained with a direct method applied to the optimal design problem described previously, consisting in discretizing the underlying differential equations, the optimal design $a(\cdot)$, and to reduce the shape optimization problem to some finite-dimensional maximization problem with constraints.
Equation~\eqref{eq:T} is discretized with the finite volume method, well adapted to that case because of its heat flux conservativeness property. We used two staggered grids in order to avoid the so-called {\it checkerboard phenomenon}~\cite{Allaire}. The resulting finite-dimensional optimization problem is solved by using a standard {\it interior-point method} \cite{bonnans}. We used the code \texttt{IPOPT} (see \cite{IPOPT}) combined with \texttt{AMPL} (see \cite{AMPL}) on a standard desktop machine. The resulting code works out the solution very quickly (for instance, around 3 seconds for the simulations below).
\end{remark}

We provide hereafter several numerical simulations of Problem \eqref{pbM}. Each structure has been discretized with $500$ design elements.
Figures~\ref{fig:constant} and~\ref{fig:decreasing} may be considered as a numerical illustration of the results stated in the theorems \ref{thpb1D:surf} and \ref{thpb1D:surfbis}. Indeed, by increasing the constraint bound $M$, a Dirac measure is outlined in $x = 0$, as expected for constant and decreasing profiles of $\beta(x)$ (see figure~\ref{fig:h_constant}~\ref{fig:h_decreasing}), even if different thermal behaviors due to the convective heat transfer coefficients are observed on both configuration (see the temperature profiles on the figures~\ref{fig:T_constant} and~\ref{fig:T_decreasing}).

To the contrary, the numerical results on the figures \ref{fig:increasing_step} and \ref{fig:increasing} provide some hints on physical cases that are not covered by the aforementioned theorems: on the first one, $\beta$ is chosen to be a kind of regularized {\it step function} (see Figure~\ref{fig:h_increasing_step}), whereas on the second one, it is assumed to be an increasing function (see Figure~\ref{fig:h_increasing}).

\begin{figure}[t!]
	\centering
	\subfigure{\includegraphics[height=0.35\textwidth]{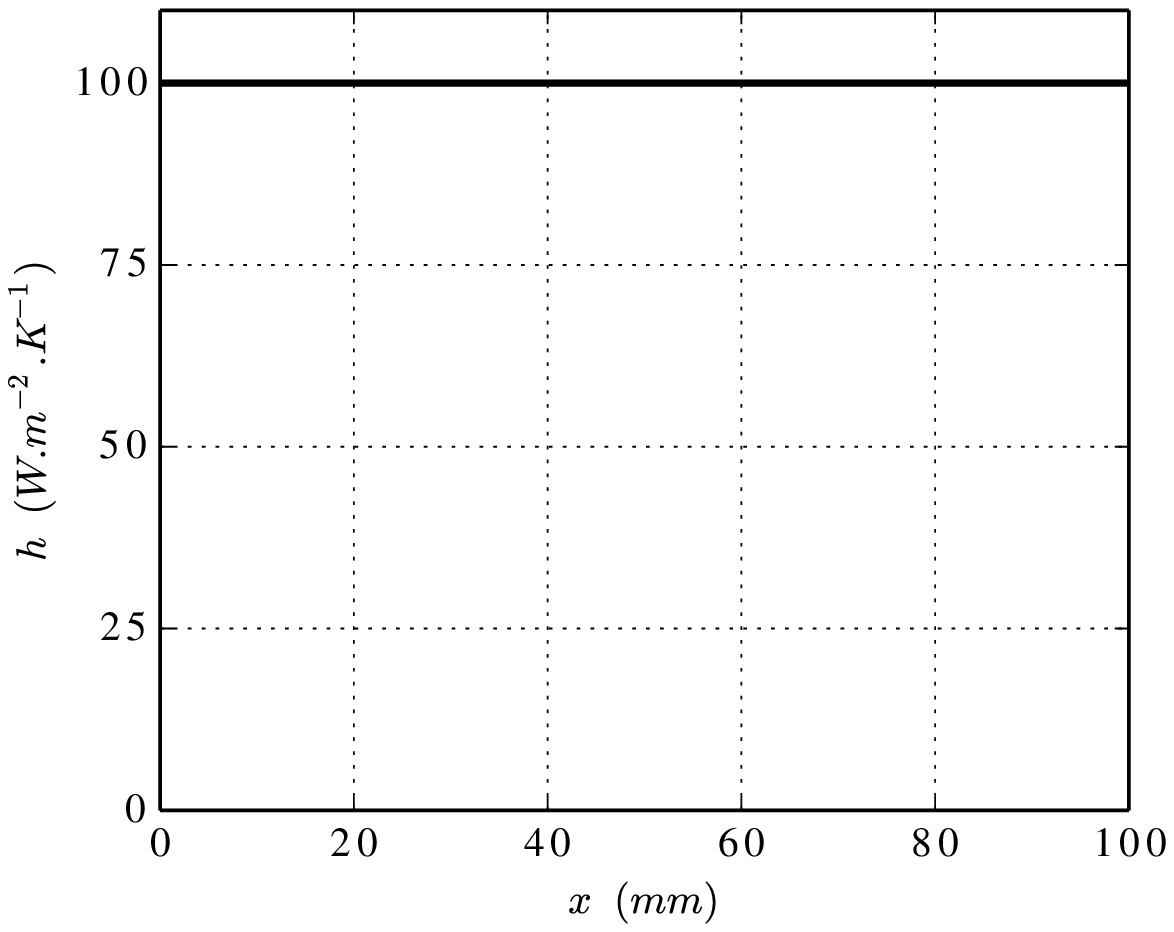}
			\label{fig:h_constant} }\quad
	\subfigure{\includegraphics[height=0.35\textwidth]{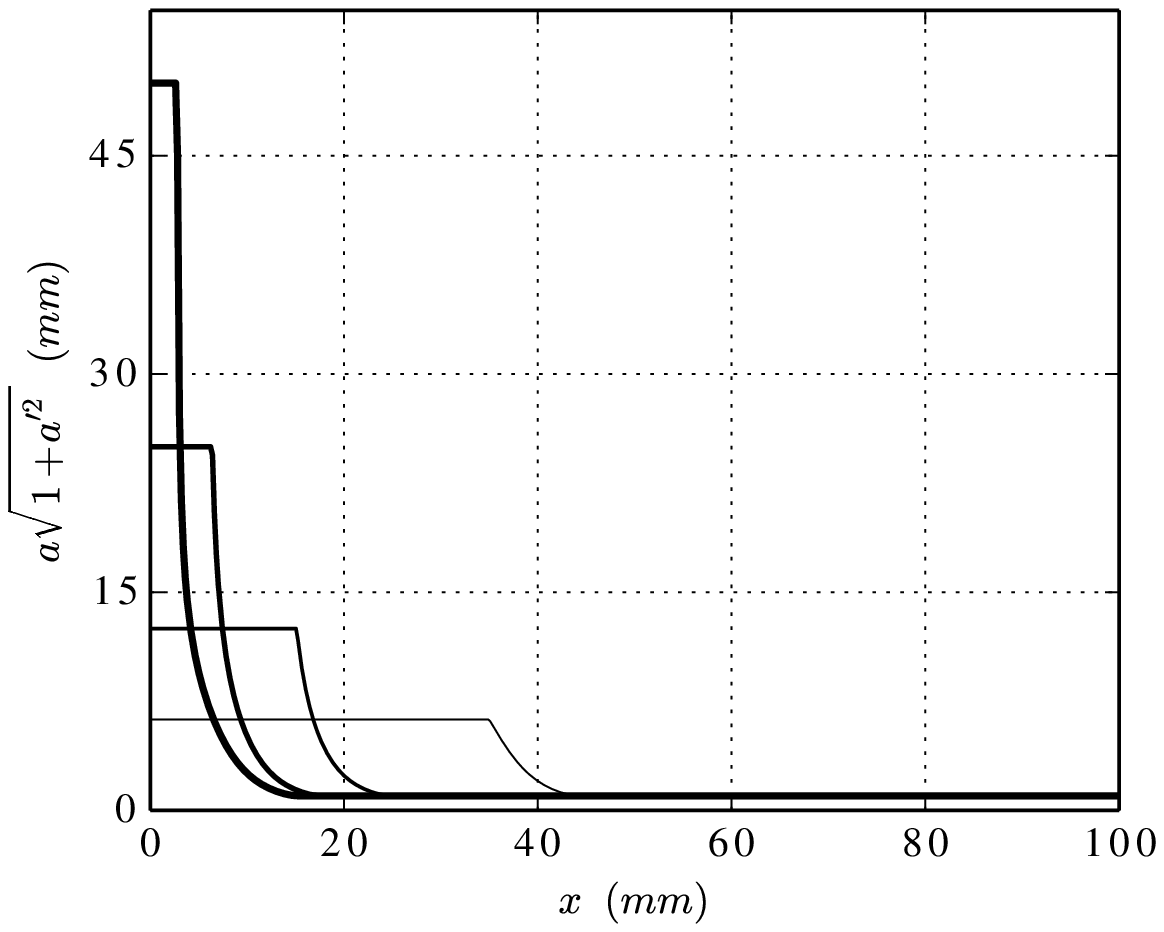}
			\label{fig:ap_constant} }\quad
	\subfigure{\includegraphics[height=0.35\textwidth]{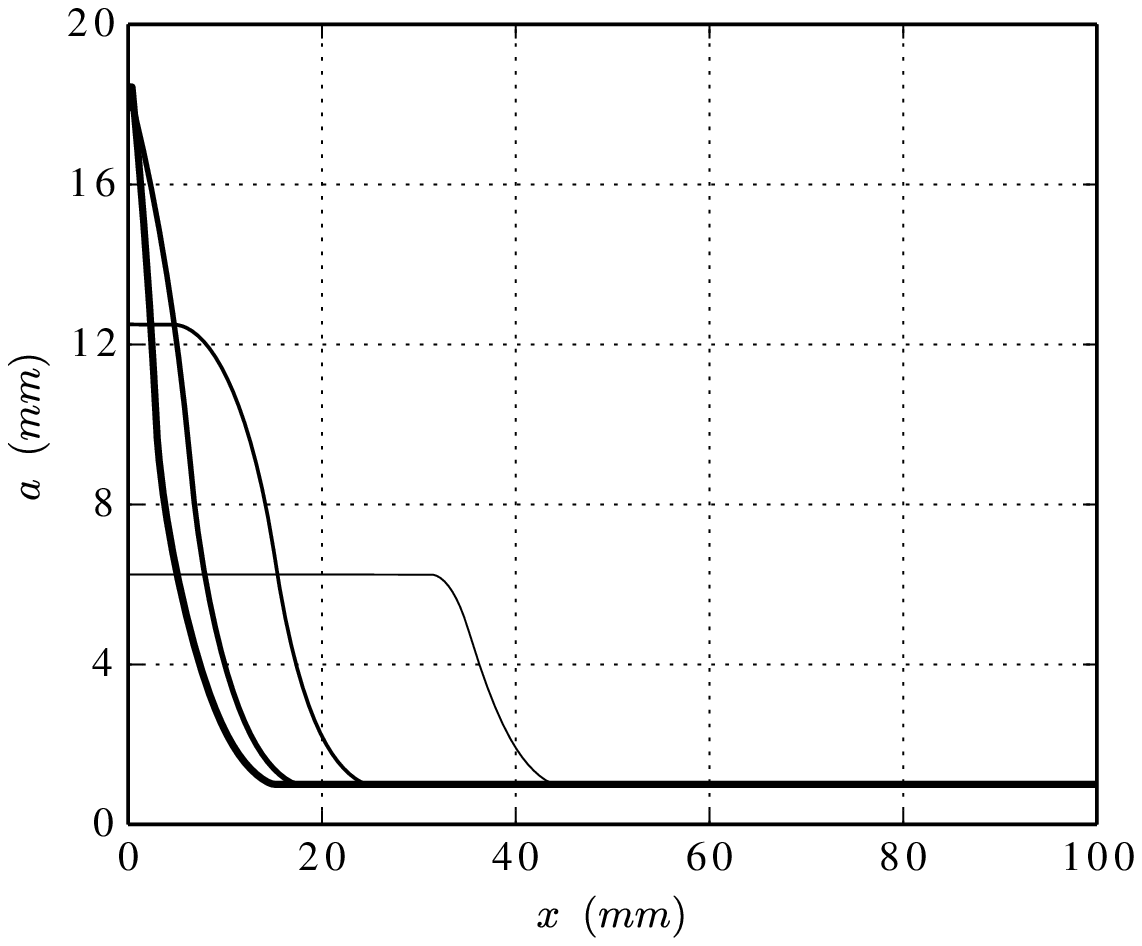}
			\label{fig:a_constant} }\quad
	\subfigure{\includegraphics[height=0.35\textwidth]{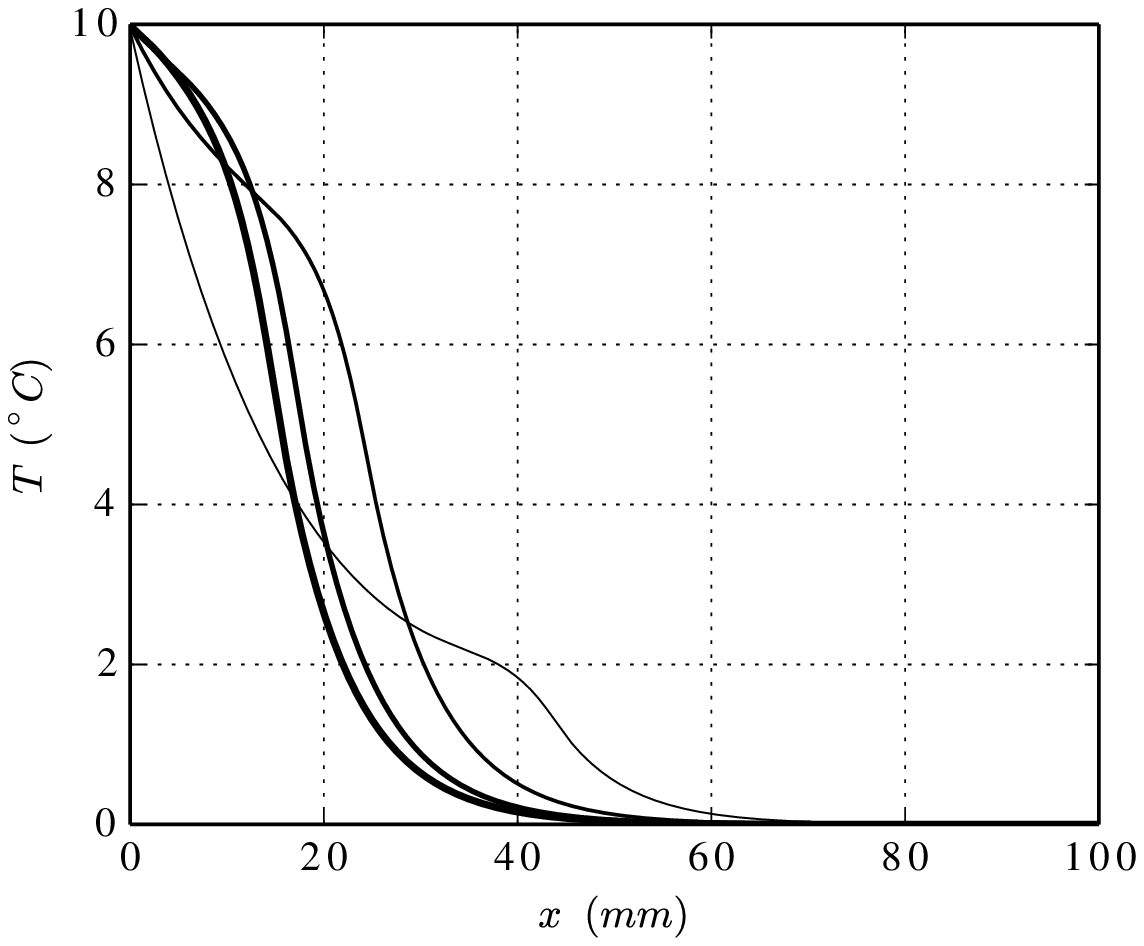}
			\label{fig:T_constant} }\quad
	\caption{Numerical solutions for a constant profile $h$ displayed in Fig.~\ref{fig:h_constant}, with $a_0 = 1 \textrm{ mm}$, $\ell = 100 \textrm{ mm}$, $S = 6 \pi a_0 \ell$, $T_d = 10\, ^{\circ}C$, $T_{\infty} = 0\, ^{\circ}C$, $h_r = h(\ell)$ and $k=10 \,  W.m^{-2}.K^{-1}$. The constraint $M$ is progressively untightened from $M=6.25 \textrm{ mm}$ (\protect\rule[0.25em]{3mm}{.5pt}), to $M=12.5 \textrm{ mm}$ (\protect\rule[0.25em]{3mm}{.75pt}), then $M=25 \textrm{ mm}$ (\protect\rule[0.25em]{3mm}{1pt}) and finally up to $M=50 \textrm{ mm}$ (\protect\rule[0.25em]{3mm}{1.25pt}). Top left: convective coefficient $h$; top right: function $a\sqrt{1+a'^2}$; bottom left: function $a$; bottom right: temperature $T$.
	\label{fig:constant}}
\end{figure}

\begin{figure}[b!]
	\centering
	\subfigure{\includegraphics[height=0.35\textwidth]{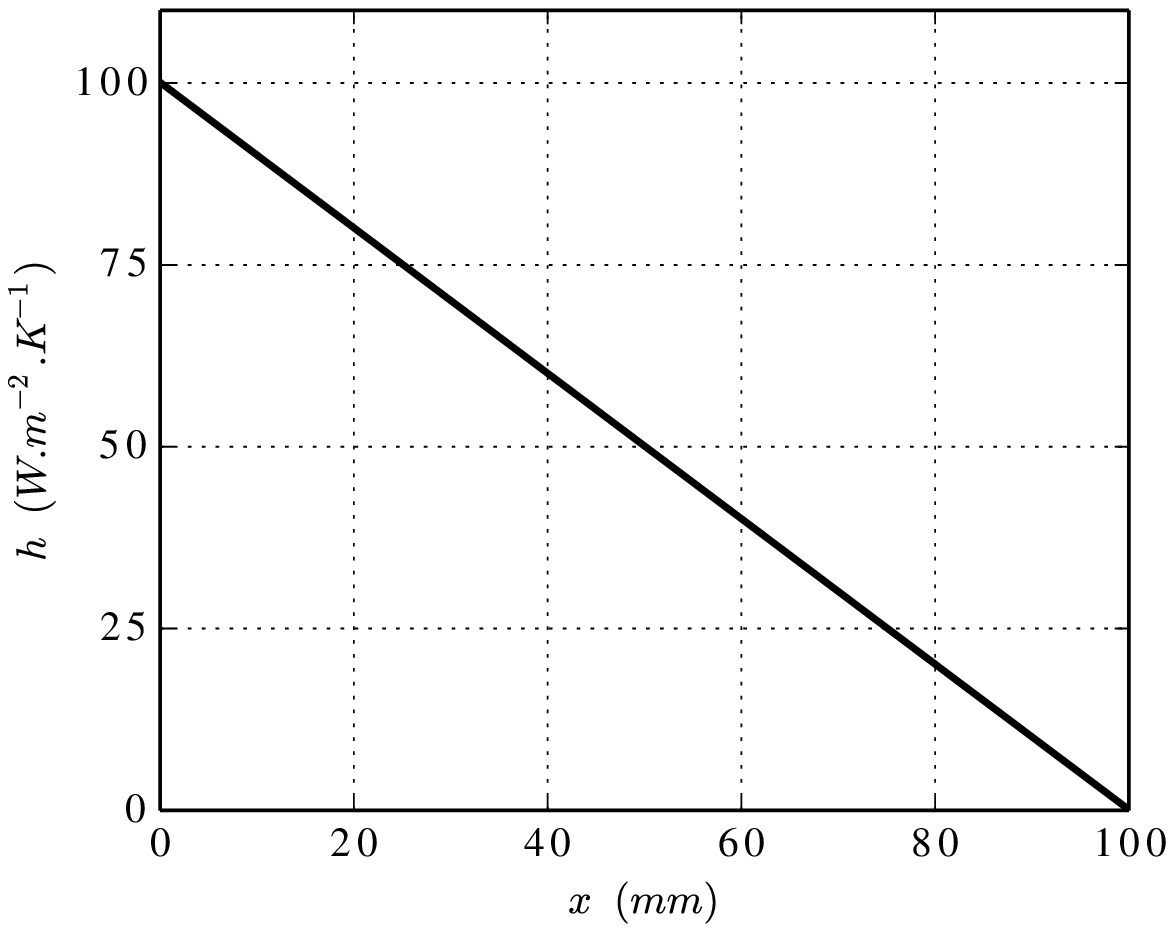}
			\label{fig:h_decreasing} }\quad
	\subfigure{\includegraphics[height=0.35\textwidth]{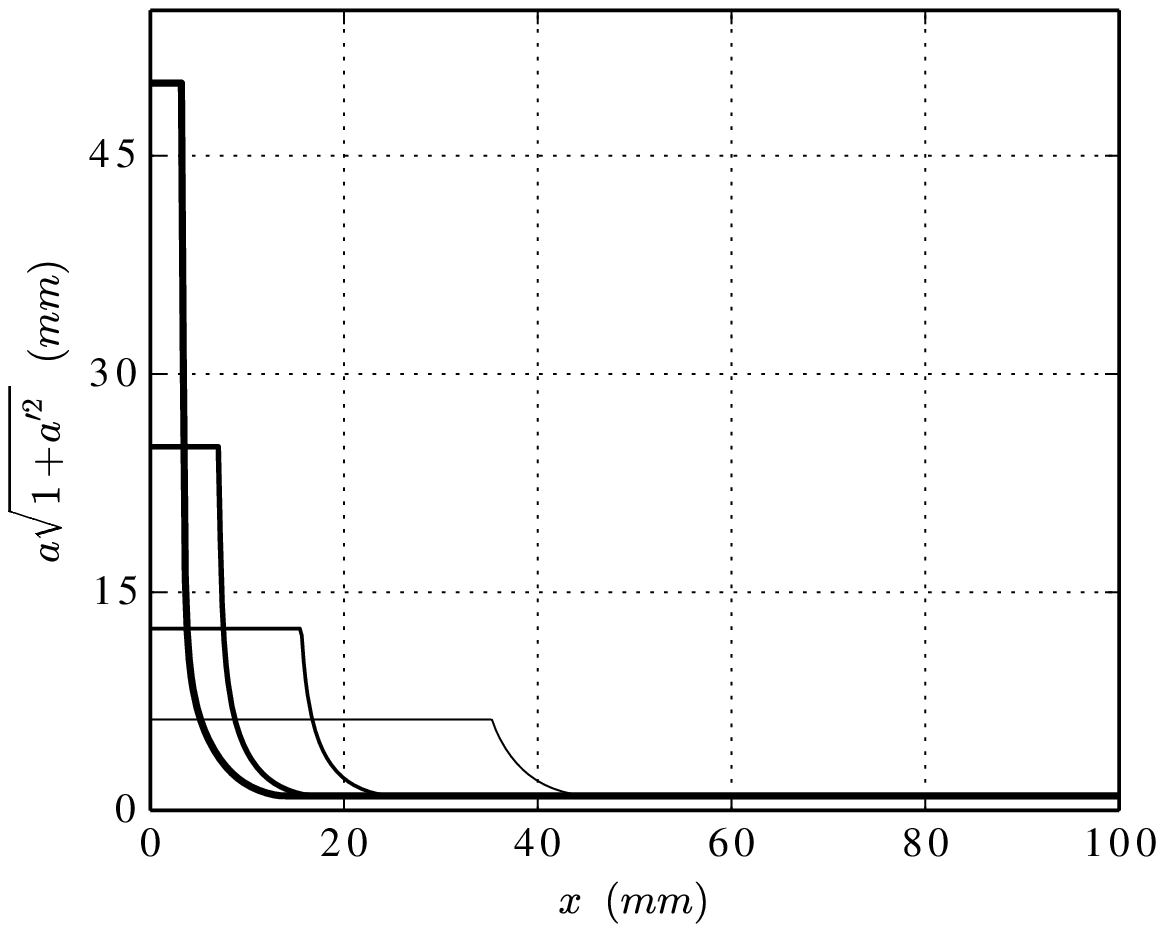}
			\label{fig:ap_decreasing} }\quad
	\subfigure{\includegraphics[height=0.35\textwidth]{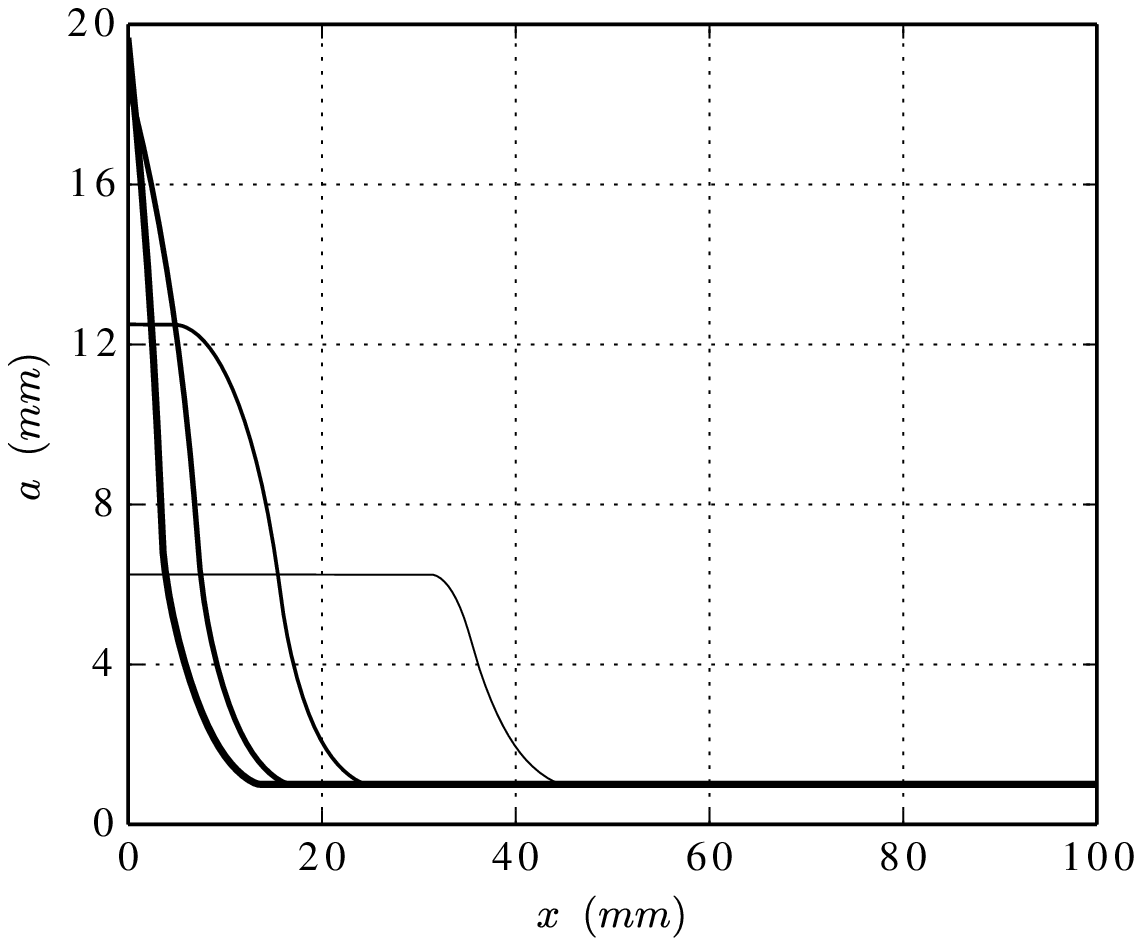}
			\label{fig:a_decreasing} }\quad
	\subfigure{\includegraphics[height=0.35\textwidth]{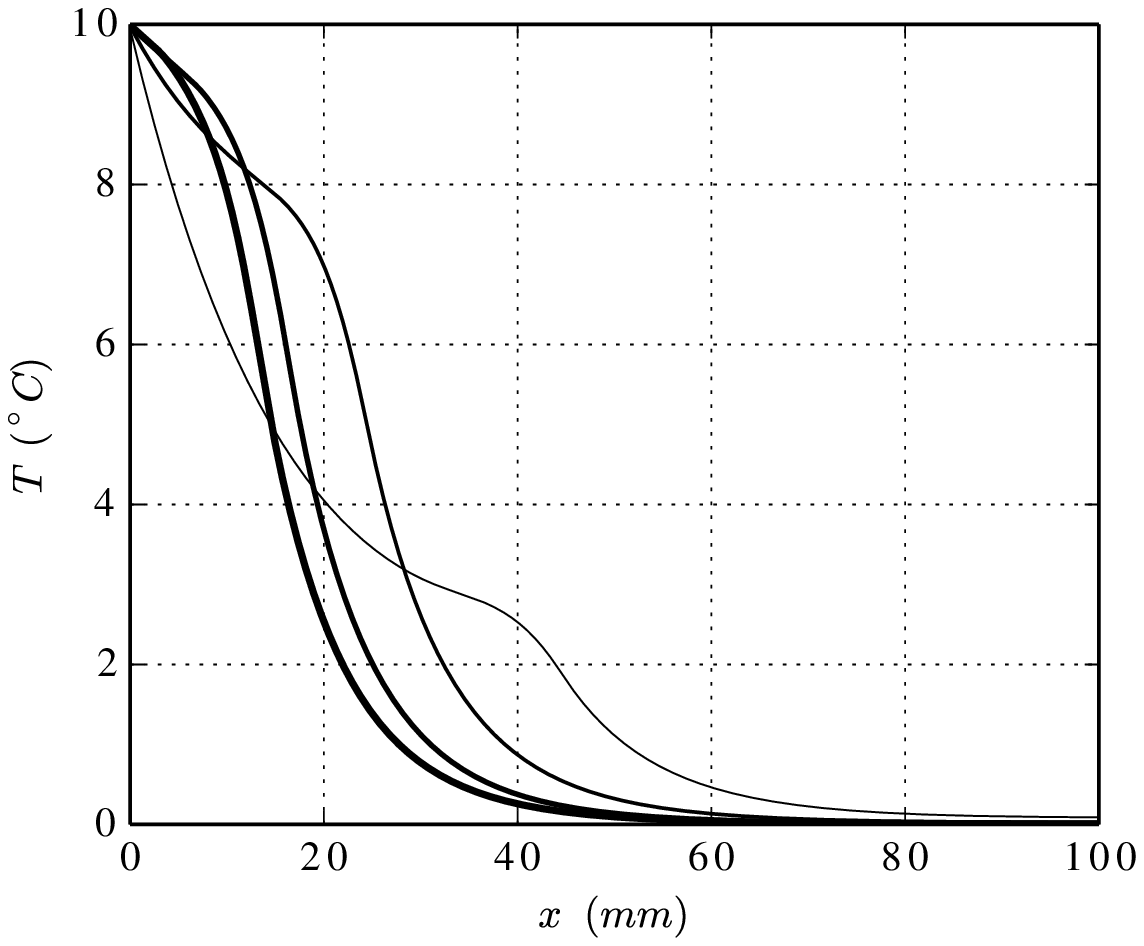}
			\label{fig:T_decreasing} }\quad
	\caption{Numerical solutions for a decreasing profile $h$ displayed in Fig.~\ref{fig:h_decreasing}, with $a_0 = 1 \textrm{ mm}$, $\ell = 100 \textrm{ mm}$, $S = 6 \pi a_0 \ell$, $T_d = 10\, ^{\circ}C$, $T_{\infty} = 0\, ^{\circ}C$, $h_r = h(\ell)$ and $k=10 \,  W.m^{-2}.K^{-1}$. The constraint $M$ is progressively untightened from $M=6.25 \textrm{ mm}$ (\protect\rule[0.25em]{3mm}{.5pt}), to $M=12.5 \textrm{ mm}$ (\protect\rule[0.25em]{3mm}{.75pt}), then $M=25 \textrm{ mm}$ (\protect\rule[0.25em]{3mm}{1pt}) and finally up to $M=50 \textrm{ mm}$ (\protect\rule[0.25em]{3mm}{1.25pt}). Top left: convective coefficient $h$; top right: function $a\sqrt{1+a'^2}$; bottom left: function $a$; bottom right: temperature $T$.
	\label{fig:decreasing}}
\end{figure}

\begin{figure}[t!]
	\centering
	\subfigure{\includegraphics[height=0.35\textwidth]{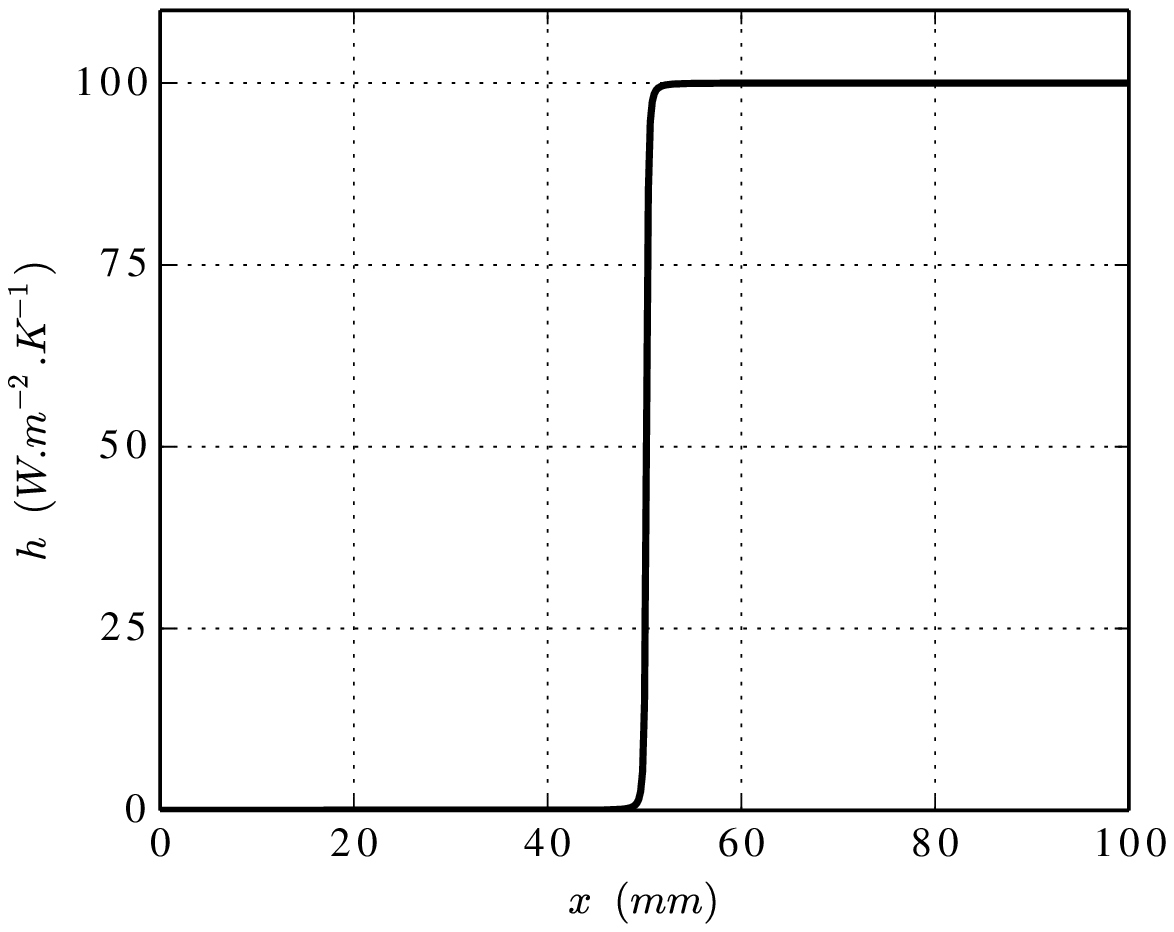}
			\label{fig:h_increasing_step} }\quad
	\subfigure{\includegraphics[height=0.35\textwidth]{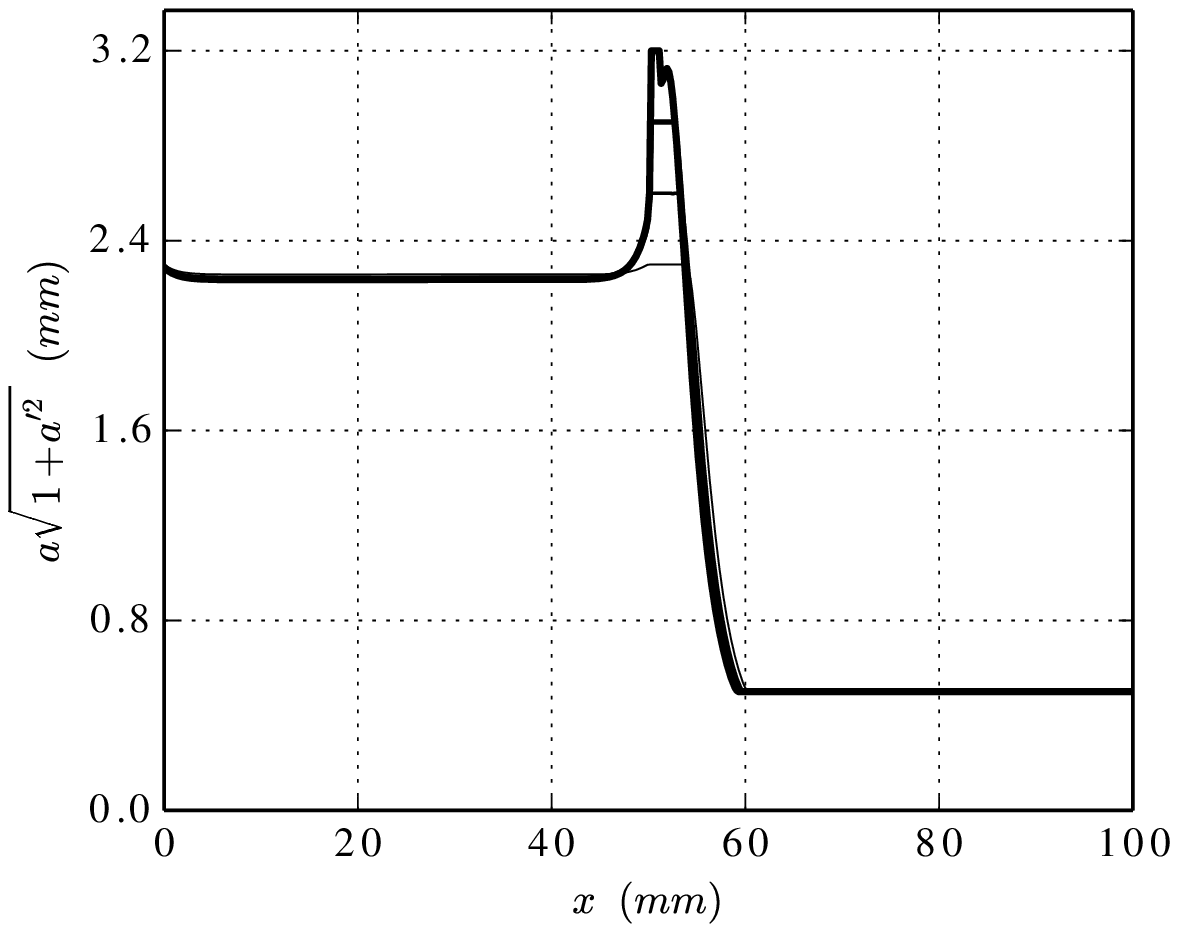}
			\label{fig:ap_increasing_step} }\quad
	\subfigure{\includegraphics[height=0.35\textwidth]{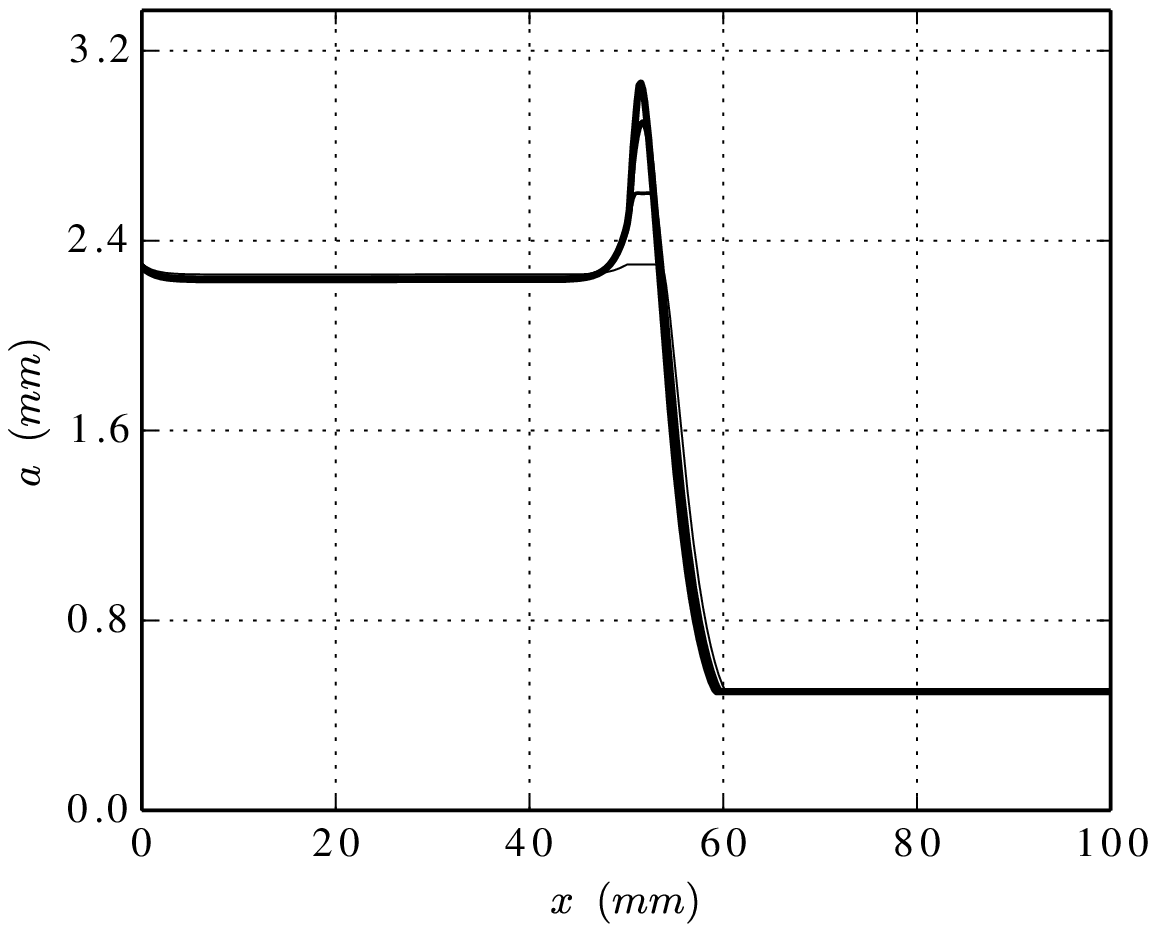}
			\label{fig:a_increasing_step} }\quad
	\subfigure{\includegraphics[height=0.35\textwidth]{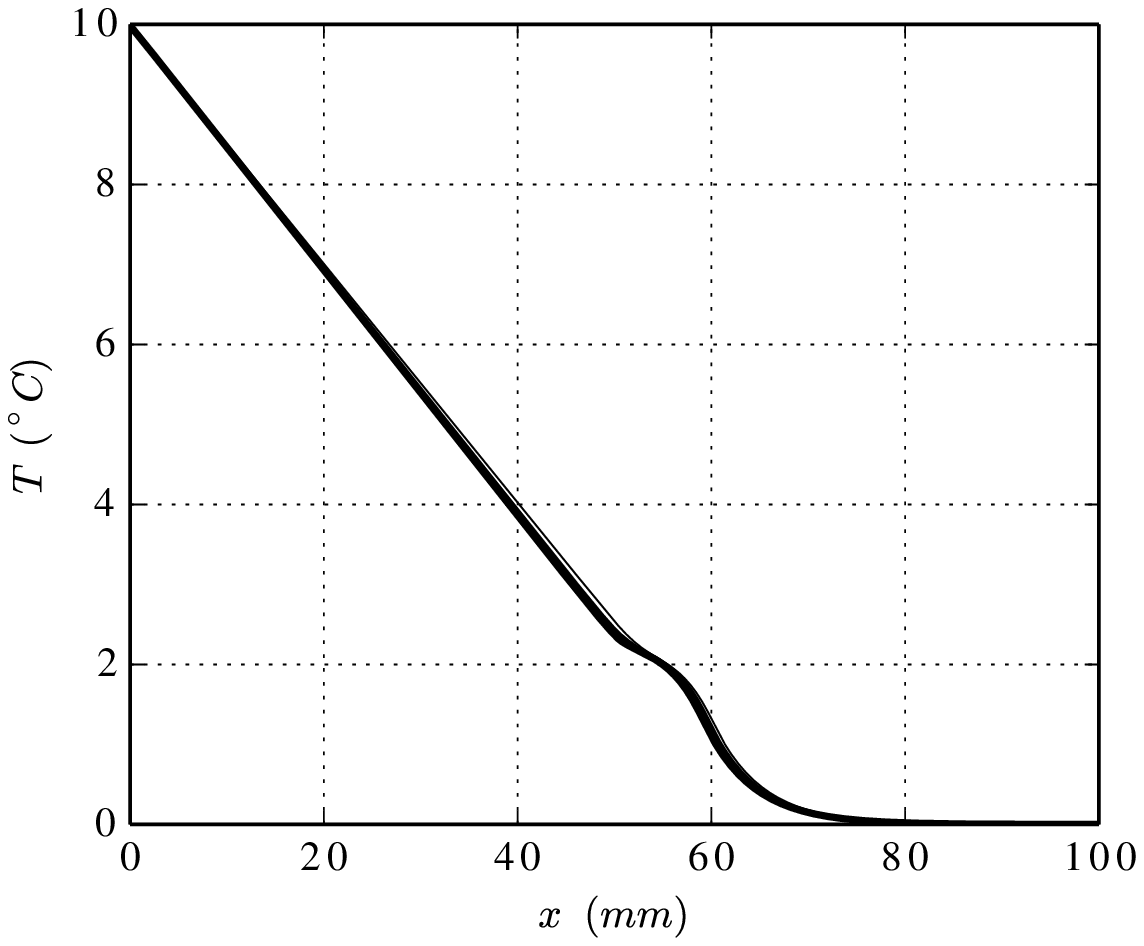}
			\label{fig:T_increasing_step} }\quad
	\caption{Numerical solutions for the increasing profile $h$ with a step taking place at $x_s=50 \textrm{ mm}$ and with $a_0 = 0.5 \textrm{ mm}$, $\ell = 100 \textrm{ mm}$, $S = 6 \pi a_0 \ell$, $T_d = 10\, ^{\circ}C$, $T_{\infty} = 0\, ^{\circ}C$, $h_r = h(\ell)$ and $k=10 \,  W.m^{-2}.K^{-1}$. The constraint $M$ is progressively untightened from $M=2.3 \textrm{ mm}$ (\protect\rule[0.25em]{3mm}{.5pt}), to $M=2.6 \textrm{ mm}$ (\protect\rule[0.25em]{3mm}{.75pt}), then $M=2.9 \textrm{ mm}$ (\protect\rule[0.25em]{3mm}{1pt}) and finally up to $M=3.2 \textrm{ mm}$ (\protect\rule[0.25em]{3mm}{1.25pt}). Top left: convective coefficient $h$; top right: function $a\sqrt{1+a'^2}$; bottom left: function $a$; bottom right: temperature $T$.
	\label{fig:increasing_step}}
\end{figure}

\begin{figure}[t!]
	\centering
	\subfigure{\includegraphics[height=0.35\textwidth]{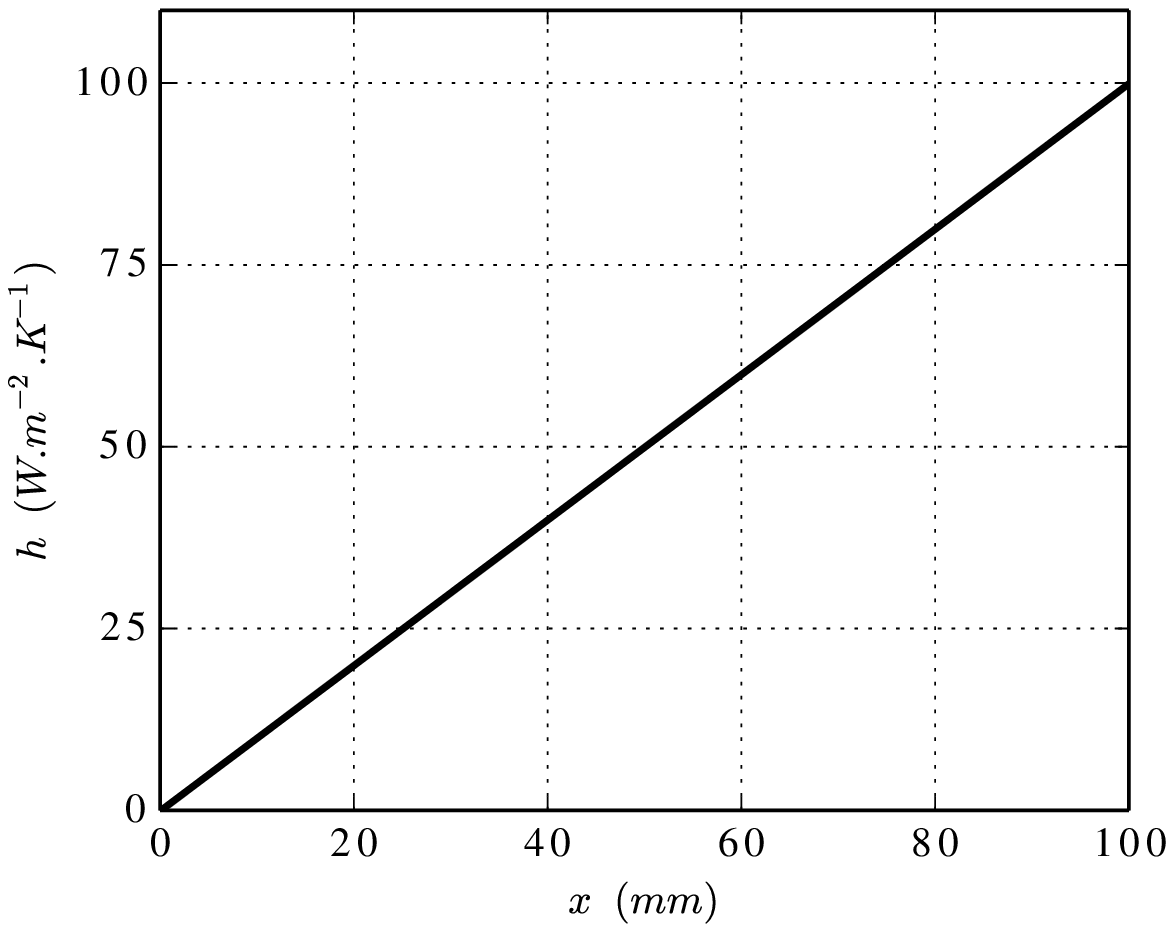}
			\label{fig:h_increasing} }\quad
	\subfigure{\includegraphics[height=0.35\textwidth]{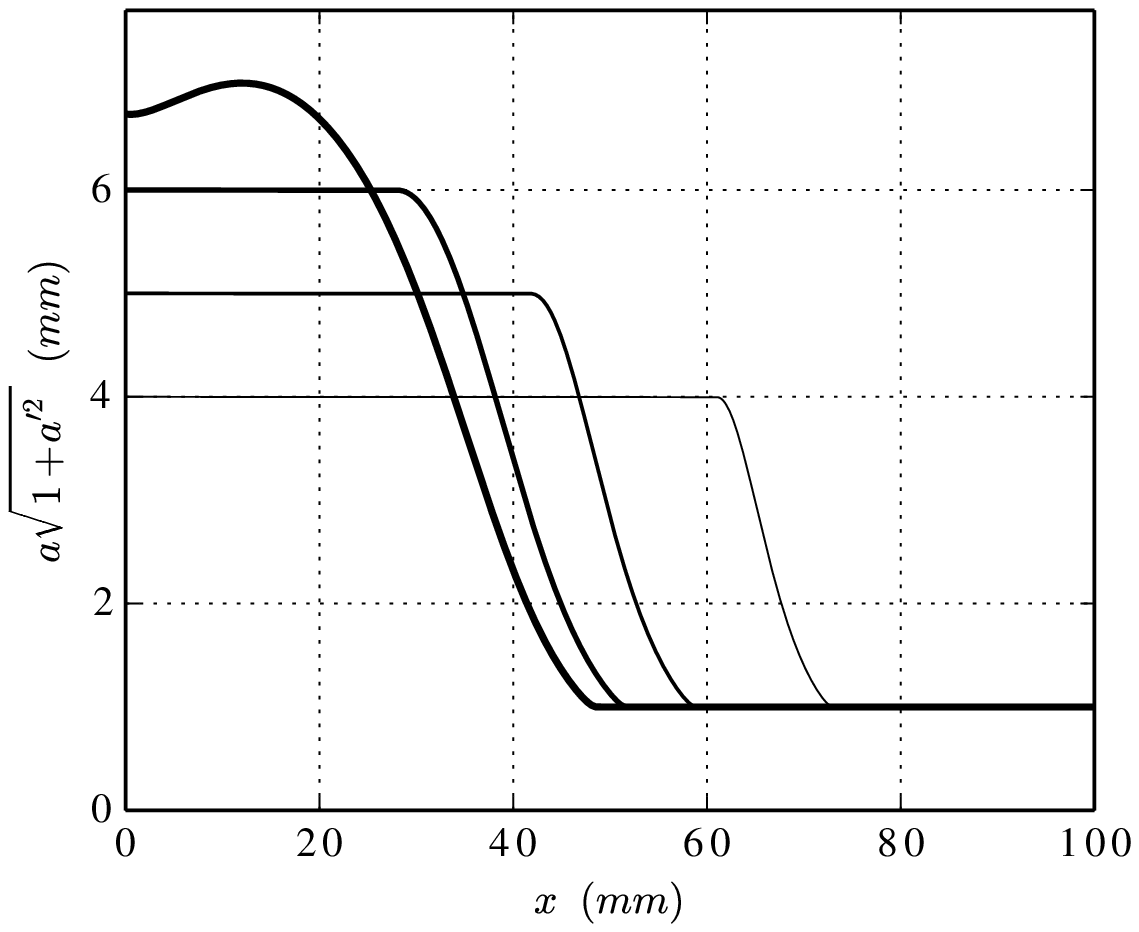}
			\label{fig:ap_increasing} }\quad
	\subfigure{\includegraphics[height=0.35\textwidth]{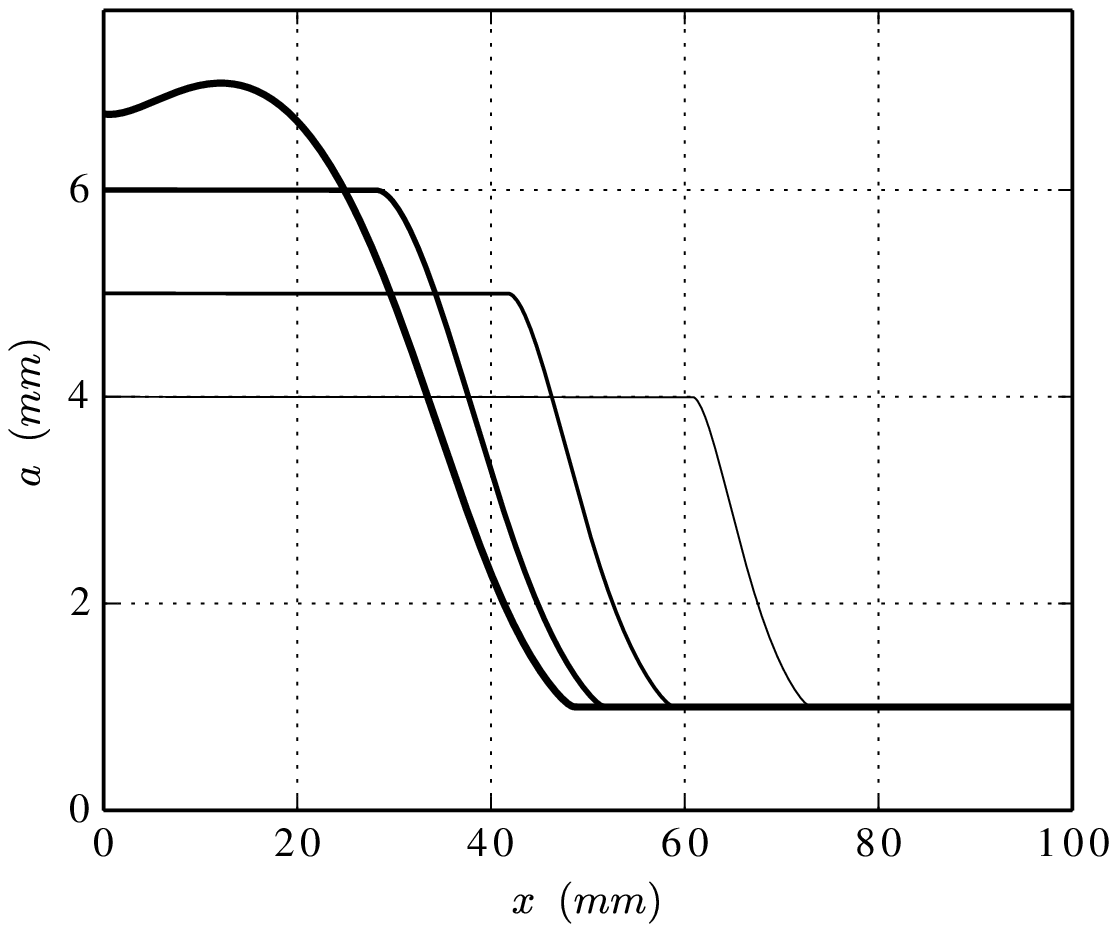}
			\label{fig:a_increasing} }\quad
	\subfigure{\includegraphics[height=0.35\textwidth]{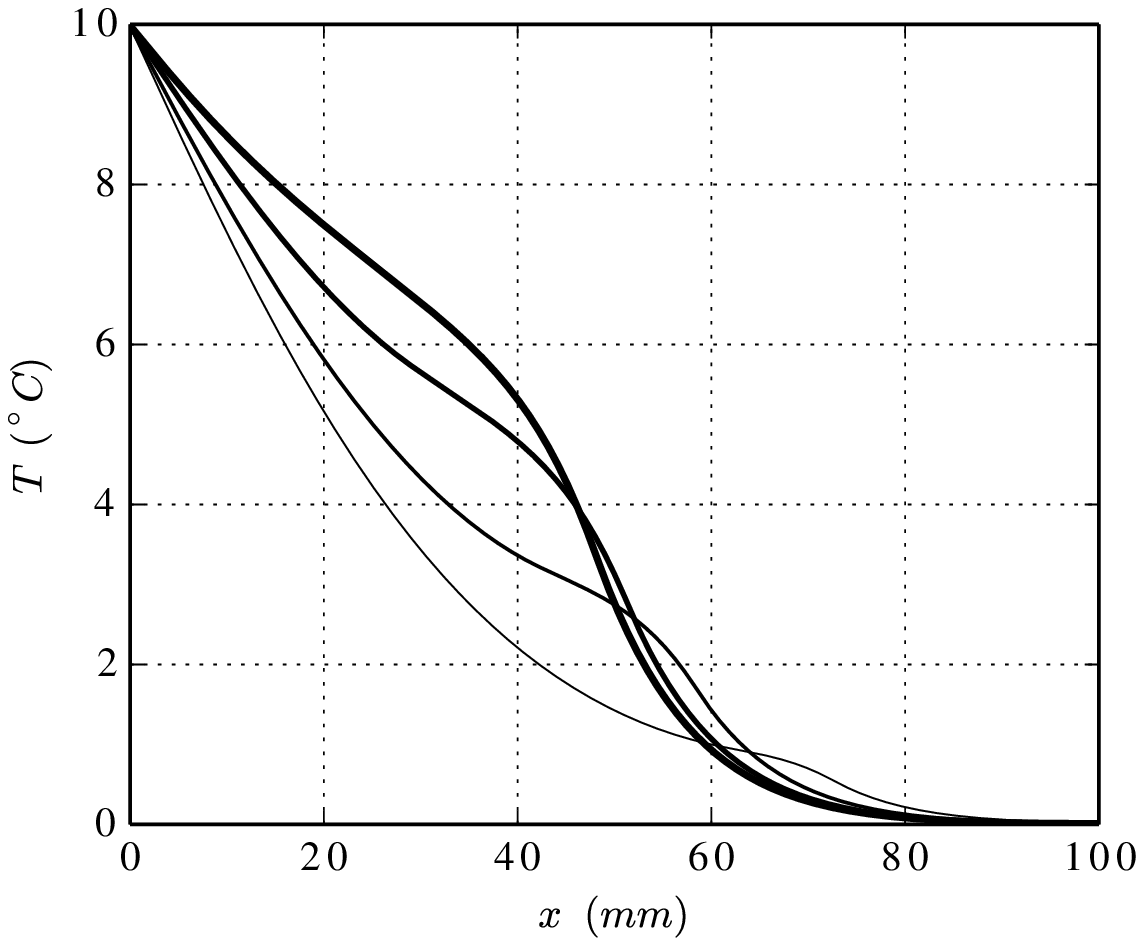}
			\label{fig:T_increasing} }\quad
	\caption{Numerical solutions for the increasing profile $h$ displayed in Fig.~\ref{fig:h_increasing} and with $a_0 = 1 \textrm{ mm}$, $\ell = 100 \textrm{ mm}$, $S = 6 \pi a_0 \ell$, $T_d = 10\, ^{\circ}C$, $T_{\infty} = 0\, ^{\circ}C$, $h_r = h(\ell)$ and $k=10 \,  W.m^{-2}.K^{-1}$. The constraint $M$ is progressively untightened from $M=4 \textrm{ mm}$ (\protect\rule[0.25em]{3mm}{.5pt}), to $M=5 \textrm{ mm}$ (\protect\rule[0.25em]{3mm}{.75pt}) and then $M=6 \textrm{ mm}$ (\protect\rule[0.25em]{3mm}{1pt}). The last solution (\protect\rule[0.25em]{3mm}{1.25pt}) is computed by fully removing the constraint $a \sqrt{ 1 + a^{\prime 2} } < M$. Top left: convective coefficient $h$; top right: function $a\sqrt{1+a'^2}$; bottom left: function $a$; bottom right: temperature $T$.
	\label{fig:increasing}}
\end{figure}

The numerical simulations on the cases that are not covered by our theoretical study suggest that several situations may arise:
\begin{itemize}
	\item according to Figure \ref{fig:increasing_step}, one could expect a nonexistence result, since the term $\underline{a}_{M}\sqrt{1+\underline{a}_{M}'^2}$ seems to converge in the sense of measures to the sum of a regular function and a Dirac measure at the point where the step of the function $\beta$ occurs ($x_s=50 \textrm{ mm}$), as $M$ tends to $+\infty$.
	\item at the opposite, one could maybe expect that the optimal design problem corresponding to the profile plotted on Figure \ref{fig:increasing}, where $\beta$ is an increasing affine function, has a solution. As a matter of fact, even if the pointwise constraint $a\sqrt{1+a'^2}$ is removed, the optimal design seems to converge towards a regular design function, without Dirac measures (the bolder profiles on figures~\ref{fig:ap_increasing} and~\ref{fig:a_increasing}). 
\end{itemize}

Finally, let us comment on the interesting mathematical issue of investigating the cases where Assumption \eqref{assump:beta} is not satisfied anymore. In that case, the perturbation $b_{\varepsilon}$ introduced in the proof of Theorem \ref{thpb1D:surfbis} (and even its general version used in Proposition \ref{prop:bang}) does not permit to conclude to the nonexistence of solutions, and would probably need a specific study. And yet, it is not clear whether the related optimal design problem has a solution, or not.

\subsection{Conclusion and perspectives}\label{sec:ccl_pers}
In this article, we addressed the issue of finding the optimal shape of a fin, by assuming that its shape is axisymmetric and considering as physical model of the temperature along its axis a simplified one-dimensional Sturm-Liouville system, much used in the engineering literature~\cite{Bergman,Welty}. Two natural constraints for the shape optimization problem have been investigated, by imposing a maximal bound either on the volume or the lateral surface of the fin. In both cases, we proved in the theorems \ref{thpb1D:vol}, \ref{thpb1D:surf} and \ref{thpb1D:surfbis} that the optimal design problems \eqref{defV} and \eqref{defS} have no solution, and we have exhibited maximizing sequences. More precisely, we showed that there is no optimal shape in the set of regular radii $a$, but that a nearly optimal shape is given by the function $a_{S,M}$ defined by \eqref{def:aSm} and displayed on Figure \ref{Fig:suitemax1}: it is highly oscillating in a neighborhood of $x=0$, and then flat on the rest of the interval. 
For such radii, the temperature inside the fin might not be independent of the polar variable $r$ anymore: this hypothesis, from which we derived the model, becomes quite questionable. 
 
It would thus be natural to investigate a more elaborated three dimensional model taking into account the dependence of the temperature $T$ with respect to each space variable. Consider a fin represented by a simply connected and bounded domain $\Omega$, and introduce $\Gamma_{i}$, the inlet of the fin, $\Gamma_{lat}$ its lateral surface and $\Gamma_{o}$ the outlet of the fin,
so that $\partial\Omega=\Gamma_{i}\cup \Gamma_{lat}\cup \Gamma_{o}$.
A possible temperature model for this fin writes
\begin{equation}\label{eq:3Dcyl}
	\begin{array}{ll}
		\triangle T = 0 &  \textrm{in }\Omega \\
		-k \frac{\partial T}{\partial \nu} = h (T - T_{\infty}) & \textrm{on } \Gamma_{lat} \\
		T = T_{d} & \textrm{on } \Gamma_{i} \\
-k \frac{\partial T}{\partial \nu} = h_r(T - T_{\infty}) & \textrm{on } \Gamma_{o} 
	\end{array}
\end{equation}
where $\frac{\partial}{\partial \nu}$ is the outward normal derivative on the boundary $\partial\Omega$.

As previously, it is convenient to maximize the heat flux at the inlet, given by
\begin{equation}
F(\Omega)=- k\int_{\Gamma_{i}} \frac{\partial T}{ \partial \nu}\, d\sigma,
\end{equation} 
over an admissible class of domains, typically those domains $\Omega$ whose inlet $\Gamma_{i}$ and outlet $\Gamma_{o}$ are fixed, and whose volume or lateral surface is prescribed.

In particular, one of the challenging aspects of this problem lies in the fact that the techniques used within this article cannot be directly used to extend our results to such kind of fin models.

\end{document}